\def\BibTeX{{\rm B\kern-.05em{\sc i\kern-.025em b}\kern-.08em
    T\kern-.1667em\lower.7ex\hbox{E}\kern-.125emX}}
\let\NAT@parse\undefined
\newcommand{\ldef}{:=}
\newcommand{\rdef}{=:}
\newcommand{\Mcal}[1]{\mathcal{#1}}
\newcommand{\tth}{^{\text{th}}}
\newcommand{\relu}[1]{\left[ #1 \right]_+}
\def \real{\mathbb{R}}
\def \integer{\mathbb{Z}}
\def \one{\mathbf{1}}
\def \x{\mathbf{x}}
\def \bx{\overline{\x}}
\def \y{\mathbf{y}}
\def \neigh{\mathcal{N}}
\def \bneigh{\overline{\mathcal{N}}}
\def \lag{L}
\def \prob{\textbf{P}}
\def \graph{\mathcal{G}}
\def \flow{\mathcal{F}}
\def \node{\mathcal{V}}
\def \edg{\mathcal{E}} 
\def \arc{\mathcal{A}}
\def \barc{\overline{\arc}}
\def \simplex{\mathcal{S}}
\def \eqpt{\mathcal{X}}
\def \ne{\mathcal{NE}}
\def \inmat{\mathbf{A}}
\def \pe{p}
\def \lev{u}
\def \d{\mathbf{d}}
\def \dyn{\mathbf{F}}
\def \supp{\mathrm{supp}}
\def \path{\Mcal{P}}
\def \M{\mathcal{M}}
\def \z{\mathbf{z}}
\def \ker{\mathrm{ker}}
\newtheorem{theorem}{Theorem}[section]
\newtheorem{corollary}[theorem]{Corollary}
\newtheorem{definition}[theorem]{Definition}
\newtheorem{lemma}[theorem]{Lemma}
\newtheorem{example}[theorem]{Example}
\newtheorem{remark}[theorem]{Remark}
\newcommand{\thmtitle}[1]{\mbox{}\textit{(#1).}}
\newcommand{\bulletsym}{\hbox{$\bullet$}}
\newcommand{\bulletend}{\relax\ifmmode\else\unskip\hfill\fi\bulletsym}
\newcommand{\squaresym}{\hbox{$\blacksquare$}}
\newcommand{\proofend}{\relax\ifmmode\else\unskip\hfill\fi\squaresym}
\newcommand{\trianglesym}{\hbox{$\blacktriangle$}}
\newcommand{\egend}{\relax\ifmmode\else\unskip\hfill\fi\trianglesym}
\newcommand{\bld}[1]{\mathbf{#1}}
\newenvironment{proofa}
{}{\proofend}
\def \papername{Dynamics of a Stratified Population of Optimum Seeking
  Agents on a Network - Part I : Modeling and Convergence Analysis}
\def \shortpapername{Stratified population dynamics on a network: modeling and convergence analysis}
\begin{document}

%\onecolumn

\title{\papername} \author{Nirabhra Mandal and Pavankumar Tallapragada
  \IEEEmembership{Member, IEEE}
  \thanks{%This paragraph of the first footnote will contain the date on which you submitted your paper for review.
    This work was partially supported by Robert Bosch Centre for
    Cyber-Physical Systems, Indian Institute of Science,
    Bengaluru. N. Mandal was supported by a fellowship grant from the
    Centre for Networked Intelligence (a Cisco CSR initiative) of the
    Indian Institute of Science, Bengaluru. } \thanks{N. Mandal is
    with the Department of Electrical Engineering, Indian Institute of
    Science, Bengaluru. P. Tallapragada is with the Department of
    Electrical Engineering and the Robert Bosch Centre for
    Cyber-Physical Systems, Indian Institute of Science,
    Bengaluru. \tt\small \{nirabhram, pavant\}@iisc.ac.in } }

\maketitle

\begin{abstract}
  In this work, we consider a population composed of a continuum of
  agents that seek to maximize a payoff function by moving on a
  network. The nodes in the network may represent physical locations
  or abstract choices. The population is stratified and hence agents
  opting for the same choice may not get the same payoff. In
  particular, we assume payoff functions that model diminishing
  returns, that is, agents in ``newer'' strata of a node receive a
  smaller payoff compared to ``older'' strata. In this first part of
  two-part work, we model the population dynamics under three choice
  revision policies, each having varying levels of coordination ---
  i. no coordination and the agents are selfish, ii. coordination
  among agents in each node and iii. coordination across the entire
  population. To model the case with selfish agents, we generalize the
  Smith dynamics to our setting, where we have a stratified population
  and network constraints. To model nodal coordination, we allow the
  fraction of population in a node, as a whole, to take the `best
  response' to the state of the population in the node's
  neighborhood. For the case of population-wide coordination, we
  explore a dynamics where the population evolves according to
  centralized gradient ascent of the social utility, though
  constrained by the network. In each case, we show that the dynamics
  has existence and uniqueness of solutions and also show that the
  solutions from any initial condition asymptotically converge to the
  set of Nash equilibria.% To conclude we show an analogy of this
  % framework with that of water trying to redistribute itself in a
  % system of connected tanks.
\end{abstract}

\begin{IEEEkeywords}
Multi-agent systems, population dynamics, stratified population, Smith dynamics, best response dynamics, collective behavior, evolution on networks.
\end{IEEEkeywords}

\section{Introduction} \label{sec:intro} %

\IEEEPARstart{A}{} number of
frameworks have emerged over the years to explore large scale \emph{multi-agent systems} including
those with optimum seeking agents. Population games and evolutionary dynamics, along with opinion dynamics and swarm control are just a few of such frameworks which are of interest to the scientific community. In such large scale systems, the evolution of the population as a whole is of more interest than that of specific, individual agents. In this paper, we
seek to model the evolution of a population of optimum seeking and
myopic agents that move on a network under different levels of
coordination to maximize a payoff function.

\subsection{Literature Survey}

A well established framework that explores such large scale systems
composed of strategic optimum seeking agents is that of population
games and evolutionary dynamics~\cite{WHS:2010:pop_games}. Such
frameworks also find application in problems related to distributed
control and formation control
\cite{NQ-COM-JBG-GO-AP-EMN:2017:article_app, JBG-GO-NQ:2016:similar,
  JMP-GDG-NQ-LPG:2020:ifac_formation_control_app}. However,
implementation of state dependent restriction on the available actions
is missing in much of the work in this literature. The role of a
network is explored in the framework of evolutionary dynamics on
graphs \cite{EL-CH-MAN:2005:egt1, KP-MB-JR-LJS:2015:egt2,
  LMH-NC-KH:2011, BA-MAN:2014:games_graph,
  BA-GL-YTC-BF-NM-STY-MAN:2017} which is concerned with a finite
population of agents modeled as nodes with the graph being a
representation of the interactions between different agents. Other
works of literature \cite{JBG-GO-NQ:2016:similar,
  GC-FF-LZ:2020:ifac_community, JBG-HT:2018, JBG-HT:2018:NEG,
  JBG-GO-AP-HT:2020:ne_neg_ifac} consider the nodes of the graph to
represent choices with the state of the population being composed of
the fraction of population choosing a particular
node. %References \cite{JBG-GO-NQ:2016:similar, JBG-HT:2018} have a formulation similar to the current work but also have
%significant differences. They primarily study variations of imitation dynamics.
References \cite{JBG-GO-NQ:2016:similar, JBG-HT:2018, JBG-HT:2018:NEG,
  JBG-GO-AP-HT:2020:ne_neg_ifac} do consider the underlying graph in
full generality but assume that the initial condition and the Nash
Equilibrium can only be in the relative interior of the
$n$-dimensional probability simplex. In all such works of literature,
all the agents in a particular node receive the same payoff.

% Opinion dynamics is another relevant area that deals with the
% evolution of social behavior as represented by the opinions of
% agents. Most work in this literature deals with finitely many agents
% \cite{AVP-RT:2017, AVP-RT:2018}.  Reference \cite{JMH-AO:2016}
% however considers a continuum of agents, similar to this work. The
% opinions are modeled to evolve in continuous space.
Yet another related area is
that of swarms and swarm control. Reference \cite{VK-SM:2018:pde}
addresses control related problems for swarms with the aid of a
framework of partial differential equations. They assume the swarm to
be a continuum evolving in a continuous space. Markov chains
\cite{IC-AR:2009:ergo, BA-DSB:2015, SB-SJC-FYH:2013:inhomogeneous} are
also popular among works that consider probabilistic movement of
agents from one cell to another in a discretized space. Reference
\cite{BA-DSB:2015} uses the convergence properties of Markov chains to
design appropriate control actions for the swarm to reach a desired
distribution. Reference \cite{IC-AR:2009:ergo} on the other hand works
on designing the Markov chains so that the swarm converges to the
stationary distribution. % Swarms can also be looked at from the
% perspective of distributed coverage
% control~\cite{YDM-SGL-ME:2015}.
% Finally, the problem of optimal
% control of an ensemble of bilinear systems is explored in
% \cite{SW-JSL:2017, SW-JSL:2018}.

\subsection{Contributions}

In this first part of our work, we study the evolution of a population
of myopic agents that seek to maximize a payoff function by moving on
a network. We model the population as a continuum of agents, with
different fractions of the population located on different nodes in
the network. The nodes in the network may represent physical locations
or choices, in a more abstract sense, available to the infinitesimal
agents. Unlike most works in literature, we consider a stratified
population where agents choosing the same choice get different payoffs
based on the strata they occupy. We study three different levels of
coordination among the agents with an inherent desire to increase
their
payoff. % described as follows: i. no coordination and the agents are selfish, ii. coordination among agents in each node and iii. coordination across the entire population. Thus each agent takes a decision to revise (or not to revise) its choice respecting the coordination level but also with an inherent desire to increase its payoff. In this process,
At each time instant, the network imposes constraints on the set of
choices that an agent can revise to. We model the evolution of the
population on the network starting from an arbitrary initial
state. Specifically, we characterize the dynamics, describe the set of
Nash equilibria and demonstrate analytically that for all initial
conditions, the trajectories converge to this set. Compared to
\cite{JBG-GO-NQ:2016:similar, JBG-HT:2018, JBG-HT:2018:NEG,
  JBG-GO-AP-HT:2020:ne_neg_ifac}, in this work, we allow both the
initial condition of the population and the equilibrium of the
dynamics to be present anywhere on the $n$-dimensional probability
simplex. Further, in our setup the set of Nash equilibria need not be
a singleton and depends heavily on the graph structure. These make our
model richer and the analysis significantly more challenging.
% Compared to the opinion dynamics literature, our proposed model has
% several distinctive features - a continuum of agents, discrete space
% for the opinions (choices) interrelated by a network and most
% significantly, the dynamics being the result of the agents
% optimizing a cost or utility function, as in game theoretic
% evolutionary dynamics.
Compared to our preliminary work~\cite{NM-PT:2020:ifac},
here we consider arbitrary cumulative payoff functions that are
strictly concave as opposed to quadratic functions and introduce a new
dynamics called the stratified smith dynamics (SSD) that models
selfish behavior. The more general payoff functions make the analysis
significantly more challenging and we thoroughly address the problem
here.

%Finally, towards the end of the paper, we show how this model was inspired from nature - in
%particular from how water in a system of connected tanks of varying cross-sectional area redistributes
%itself from an initial configuration.

\subsection{Organization}
The rest of the paper is organized as follows. In Section \ref{sec:setup},
we provide the basic framework of the model and setup
the core notation and the overall problem that we address
in this paper. In Section \ref{sec:gen_dyn}, we collect and present a few definitions and results that help us in proving results in the rest of the paper. In Section \ref{sec:SSD}, we deal with the case where the agents in the population are selfish; model the dynamics and analyze its convergence. In Section \ref{sec:NBRD}, we give a dynamics in spirit of best response dynamics in game theory but from the perspective of the population fraction in each node. We then analyze existence, uniqueness and convergence properties of the solutions. In Section \ref{sec:NRPM}, we give a centralized dynamics
under which the population evolves and again analyze properties like existence, uniqueness and convergence of the solutions. %In Section \ref{sec:water_tank}, we provide an analogy of water in a system of connected tanks. 
Finally, in Section \ref{sec:conclusion}, we summarize the paper
and provide directions in which it can be extended further.

In part two of this work\cite{NM-PT:2020:journ2}, we present steady
state analysis of the three dynamics SSD, NBRD and NRPM. In
particular, we provide sufficient conditions on the graph under which
all the three dynamics have a unique equilibrium point and in the case
of a general graph, provide a computationally efficient method to
compute the steady state value of the social utility.

\subsection{Notation and Definitions}

We denote the set of real numbers, the set of non-negative real
numbers and the set of integers with $\real$, $\real_+$ and
$\integer$, respectively. We let $[p,q]_\integer$ be the set of all
integers between $p$ and $q$ (inclusive), \emph{i.e.},
$[p,q]_\integer \ldef \{x \in \integer \,|\, p \leq x \leq
q\}$. $\real^n$ (similarly $\real_+^n$) is the cartesian product of
$\real$ (equivalently $\real_+$) with itself $n$ times. If $\bld{v}$
is a vector in $\real^n$, we denote $\bld{v}_i$ as the $i^\mathrm{th}$
component of $\bld{v}$ and for a vector $v \in \real^n$, we let
$\supp(\bld{v}) \ldef \{i \in [1,n]_\integer \,|\, \bld{v}_i \neq
0\}$. A closed neighborhood around $\bld{v}$ with respect to the
$l$-norm is denoted by
$\Mcal{B}_l(\bld{v},r) \ldef \{\bld{w} \in \real^n \,|\, \|\bld{w} -
\bld{v}\|_l \leq r\}$. % $I_n$ is
% the identity matrix of size $n \times n$.
We let $\one$ be the vector, of appropriate size, with all its
elements equal to 1 and we let $\bld{e}_i$ be the vector, again of
appropriate size, with its $i^\mathrm{th}$ element equal to 1 and 0
for all other elements. The empty set is denoted by $\varnothing$. For
two sets $\mathcal{U}, \mathcal{V} \subset \mathcal{Q}$, the set
subtraction operation is denoted by
$\mathcal{U} \setminus \mathcal{V} = \mathcal{U} \cap \mathcal{V}^c$,
where $\mathcal{V}^c$ is the set complement of $\mathcal{V}$ in
$\mathcal{Q}$. If $\mathcal{Q}$ is an ordered countable set, then
$\mathcal{Q}_i$ denotes the $i^\mathrm{th}$ member of $\mathcal{Q}$
and $|\mathcal{Q}|$ is used to represent the cardinality of
$\mathcal{Q}$.  $\{i,j\}$ is used to denote an unordered pair while
$(i,j)$ is used to denote an ordered pair. For a vector
$\bld{v} \in \real^n$, $\bld{v} \geq 0$ is used to denote term wise
inequalities. For a matrix $\bld{M} \in \real^{n \times m}$, the
$ij$-th element of $\bld{M}$ is denoted by
$[\bld{M}]_{ij}$. %If $\bld{M} \in \real^{n \times n}$, $\bld{M} \succeq 0$ (similarly $\bld{M} \succ 0$) is used to denote positive semi-definiteness (equivalently positive definiteness).
%For a function $f(x) : \real \rightarrow \real$, $f'(.) \ldef \frac{d f}{d x}(.)$ is used to denote the function which is the derivative of $f(.)$. 
By $\relu{.} : \real \to \real_+$ we denote the function that is
defined as $\relu{x} \ldef \max \{x,0\}$. For a function
$f(\x) : \real^n \rightarrow \real$, $\nabla_\x f$ is used to denote
the gradient of $f(.)$ with respect to $\x$, \emph{i.e.}, the $j\tth$
element of $\nabla_\x f$ is $\frac{\partial f}{\partial
  \x_j}$. %For a function $\dyn(\x,\bld{w}) : \real^n \times \real^m \rightarrow \real^p$, $\jacob_\x \dyn$ is used to denote the jacobian of $\dyn(.)$ with respect to $\x$, \emph{i.e.}, $\jacob_\x \dyn \in \real^{p \times n}$ with $[\jacob_\x \dyn]_{ij} = \frac{\partial \dyn_i}{\partial \x_j}$.

\section{Problem setup} \label{sec:setup}

In this paper, we consider a \emph{population} composed of a continuum
of \emph{agents} that seek to maximize their \emph{payoff} by moving
on a network with different levels of coordination. Let $\node$ be a
set of nodes, $\edg \subseteq \node \times \node$ be a set of edges
and $\graph \ldef (\node,\edg)$ be an undirected connected graph that
does not contain self loops or multiple edges between any pair of
nodes. Let $N \ldef |\node|$ be the total number of nodes and
$M \ldef |\edg|$ be the total number of edges in the graph. The nodes
in the network may either represent physical locations or
\emph{choices}, in a more abstract sense, that are available to the
infinitesimal \emph{agents} constituting the population. Let
$\x_i \in [0, 1]$ be the \emph{fraction} of the population in node
$i$, or equivalently making the choice $i$. We assume that the overall
population is fixed and, without loss of generality, assume that
$\sum_{i \in \node} \x_i = 1$.

Let $\pe_i(.) : [0,1] \to \real$ be the function that models the
\emph{cumulative payoff} of the fraction $\x_i$. We assume that the
fraction in each node is stratified and the agents in different strata
of a given node receive different payoffs. Let
$[a,b] \subseteq [0,\x_i]$ be an arbitrary interval. Then the agents
of node $i$ that are in the \emph{strata} $[a,b]$ get an \emph{average
  payoff} of
\begin{equation*}
  \frac{\pe_i(b)-\pe_i(a)}{b-a} .
\end{equation*}
Thus the total or cumulative payoff that agents in node $i$ receive is
$\pe_i(\x_i) - \pe_i(0)$. Notice that for a node $i$ if
$a \in [0,\x_i]$,
\begin{equation*}
  \lev_i(a) \ldef \frac{\mathrm{d}\,\pe_i}{\mathrm{d}\,y}(a) \,,
  %\label{eq:level}
\end{equation*}
is the rate of change of the cumulative payoff at $a$ and is also the
average payoff that the agents in the strata $[a]$ of node $i$
receive. By \emph{strata} $[a]$ we mean the \emph{infinitesimal
  strata} around $a$ in $[0,\x_i]$. We call $\lev_i(.)$ as the
\emph{payoff density function} of node $i$.
% $[a, a+\varepsilon]$, as $\varepsilon \to 0$, get will be given by
% the derivative of $\pe_i(.)$ at $a$.  We represent this
% \emph{infinitesimal strata} $[a,a+\varepsilon]_{\varepsilon \to 0}$
% by $[a]$ and say that the average payoff of \emph{strata} $[a]$ is
We let $\lev_i(0)$ be the right derivative of $\pe_i(.)$ at
\emph{zero} and $\lev_i(1)$ be the left derivative of $\pe_i(.)$ at
\emph{one}. We let $\lev(.)$ be the vector whose $i\tth$ element is
$\lev_i(.)$. Through out this paper, we make the following assumption.
\begin{itemize}
\item[\textbf{(A1)}] For all $i \in \node$, $\pe_i(.)$ is twice
  continuously differentiable and strictly concave. Hence,
  $\forall i \in \node$, $\lev_i(.)$ is a strictly decreasing
  function.
\end{itemize}

The function,
\begin{equation}
	U(\x) \ldef \sum_{i \in \node} \ [\pe_i(\x_i) - \pe_i(0)] \,,
	\label{eq:social_utility}
\end{equation}
which we call as the \emph{social utility function} is defined as the
sum of the cumulative payoffs of agents in all the nodes. This
represents the aggregate payoff that the population receives as a
whole. Note that $U(.)$ is a strictly concave function and
$\bld{e}^T_i \nabla U (\x) = u_i(\x_i)$, $\forall \x$.

Let $\neigh^i$ be the set of all neighbors of node $i$ in the graph
$\graph$ and let $\bneigh^{\,i}= \neigh^i \cup \{i\}$. Given the
undirected graph $\graph$, let
\begin{equation*}
  \arc \ldef \bigcup_{\{i,j\} \in \edg} \{ (i,j),(j,i) \} .
\end{equation*}
It is easy to see that $|\arc| = 2\,|\edg| = 2M$. The arcs in $\arc$
are useful for denoting the inflow and outflow of the population
fractions between adjacent nodes. Given a configuration $\x$, each
infinitesimal agent in a node $i$ may be able to increase its payoff
by moving to its neighboring nodes $\neigh^i$. We are interested in
how the population as a whole evolves under different levels of
coordination.

When an infinitesimal agent in node $i \in \node$ decides to switch to
a node $j \in \neigh^i$, it gets to enter the \emph{newest strata}
$[\x_j]$ of node $j$. We impose this restriction in order to model
diminishing returns, \emph{i.e.} newer agents get lower payoff than
older ones.

\begin{remark}\thmtitle{Diminishing returns} 
  As $\lev_i(.)$'s are strictly decreasing functions, the agents in a
  \emph{newer strata} ($[a]$ for higher $a$) get lower payoffs than
  the ones in an \emph{older strata} ($[a]$ for a lower $a$). Since we
  restrict the incoming agents to enter a node at the newest strata,
  they always get a payoff lesser than the older agents already
  residing in the node. \bulletend
  % This is in line with the idea of diminishing
  % returns in real world scenarios such as
% \begin{itemize}
% \item Investing earlier in a profitable company results in higher
%   returns than investing at a later stage.
	
% \item A piece of land in an area might have a higher valuation if the
%   population of the area is more.
% \end{itemize}    
% \bulletend
\end{remark}

In this paper, we consider the class of choice revision dynamics that
can be expressed in the form
\begin{equation}
  \dot{\x}_i = \sum_{j \in \neigh^i} [\delta_{ji}(\x) -
  \delta_{ij}(\x)], \ \forall i \in \node , \label{eq:fbd}
\end{equation}
where $\delta_{ij}(\x) \geq 0$ denotes the \emph{outflow} of the
fraction of population that moves from node $i$ to node
$j \in \neigh^i$ through the arc $(i,j)$ as a function of the
population state $\x$. We additionally impose the condition that
$\delta_{ij}(\x) = 0$ $\forall j \in \neigh^i$ if $\x_i = 0$ in order
to account for the fact that there cannot be any instantaneous outflow
of population from a node if the node is empty.  We call such dynamics
as \emph{flow balanced dynamics}. By choosing different sets of
functions $\delta_{ij}(.)$'s, we can model different dynamics
using~\eqref{eq:fbd}. In this paper, we consider three specific
dynamics with varying degrees of coordination among the agents as
listed below.

\begin{itemize}
\item \emph{Stratified Smith dynamics:} In the first dynamics, we
  assume that each agent is selfish and revises its choice at
  independent and random time instants. We model the evolution of the
  population's choice configuration by extending the standard Smith
  dynamics~\cite{WHS:2010:pop_games} to the case of stratified
  population. In this dynamics, whenever an agent gets an opportunity
  to revise its choice, it does so by comparing its payoff with the
  payoff of the agents at the newest strata in a neighboring node. We
  model and explore this dynamics in Section \ref{sec:SSD}.

\item \emph{Nodal best response dynamics (NBRD):} In Section
  \ref{sec:NBRD}, we assume that the agents coordinate with each other
  at the nodal level. In this case, the dynamics is the result of the
  fraction in each node $i$ redistributing according to the best
  response of the fraction $\x_i$, as a whole, to the current
  configuration $\x$ while assuming that the fractions in the
  neighboring nodes do not change.

\item \emph{Network restricted payoff maximization (NRPM):} In Section
  \ref{sec:NRPM}, we assume that the agents coordinate across the
  entire population in a centralized manner. The population evolves
  according to network restricted gradient ascent of the social
  utility of the entire population.
\end{itemize}

For each of the dynamics, we analyze properties such as existence and
uniqueness of solutions and convergence. In order to reduce repetition
of ideas and analysis, we first discuss and analyze the general flow
balanced dynamics in the following section.

\section{Flow Balanced Dynamics}\label{sec:gen_dyn}

In this section, we discuss the general flow balanced dynamics, which
can be described in terms of \emph{inflows} and outflows of population
between neighboring nodes. We provide sufficient conditions on a
general flow balanced dynamics under which existence and uniqueness of
solutions are guaranteed. Then for a sub-class of flow balanced
dynamics, we also discuss convergence of solutions.

Recall the flow balanced dynamics in~\eqref{eq:fbd} and recall that
$\delta_{ij}(\x)$ denotes the outflow from node $i \in \node$ to node
$j \in \neigh^i$ as a function of the current state $\x$. Thus, the
rate of change of $\x_i$ is given by the inflows from neighboring
nodes $j$ to $i$ minus the outflows from node $i$ to its neighbors. In
the sequel, we omit the argument of the outflow and denote it by
$\delta_{ij}$ wherever there is no confusion.

For the directed graph $\flow \ldef (\node,\arc)$, let
$\inmat \in \real^{N \times 2M}$ be the incidence matrix. In
particular, we can number each arc in $\arc$ and let $\arc_m$ be the
$m\tth$ arc, with $m \in [1\ , \ 2M]_\integer$. If $\arc_m = (i,j)$,
then $[\inmat]_{im} = -1, [\inmat]_{jm} = 1$ and
$[\inmat]_{km} = 0, \,\forall \, k \in \node \backslash
\{i,j\}$. Similarly, we assemble the elements of the set
$\{\delta_{ij}\}_{(i,j) \in \arc}$ into the vector
$\Delta \in \real^{2M}$ as
\begin{equation}\label{eq:Delta}
  \Delta_m \ldef \delta_{ij}, \text{ for } m \in [1\ , \
  2M]_\integer  \text{ s.t. } \arc_m = (i,j) .
\end{equation}
Then, flow balanced dynamics~\eqref{eq:fbd} is concisely expressed as
\begin{equation}
  \dot{\x} = \inmat \,\Delta(\x) \rdef \dyn(\x) .
  \label{eq:gen_dyn}
\end{equation}
It is easy to see that the simplex,
\begin{equation*}
\simplex \ldef \{ \x \in \mathbb{R}^N \,|\,\, \x \geq 0 \,\,,\, \one^T \x = 1 \},
\end{equation*}
which is a compact set, is positively invariant under
\eqref{eq:gen_dyn}. The following lemma, which we prove in
Appendix~\ref{app:aux-results}, gives a sufficient condition under
which $\dyn(.)$ is locally Lipschitz and consequently
\eqref{eq:gen_dyn} has existence and uniqueness of solutions for each
initial condition in $\simplex$.
\begin{lemma}\label{lem:gen_dyn_EU}
  \thmtitle{Existence and uniqueness of solutions for flow balanced
    dynamics} %
  Suppose there exist closed sets
  $\{\Mcal{C}_i \subseteq \real^N_+ \}_{i \in [1,n]_\integer}$ and
  functions
  $\{ \dyn^i(.) : \real^N \to \real^N \}_{i \in [1,n]_\integer}$ such
  that $\bigcup_{i \in [1,n]_\integer} \Mcal{C}_i = \real^N_+$ and
  $\dyn(\x) \big|_{\x \in \Mcal{C}_i} = \dyn^i(\x) $,
  $\forall \, i \in [1,n]_\integer$, with
  $\dyn^i(\x) = \dyn^j(\x) = \dyn(\x)$ for all
  $\x \in \Mcal{C}_i \cap \Mcal{C}_j$. Suppose
  $\forall i \in [1,n]_\integer$, $\dyn^i$ is locally Lipschitz in the
  domain $\Mcal{C}_i$. Then $\dyn(.)$ is Lipschitz in the domain
  $\simplex$. Further, for each initial condition $\x(0) \in \simplex$
  system~\eqref{eq:gen_dyn} has a unique solution
  $\forall \, t \geq 0$.
  
   \bulletend
\end{lemma}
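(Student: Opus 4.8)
The plan is to prove that $\dyn$ is locally Lipschitz on $\simplex$ by a patching argument over the finite cover $\{\Mcal{C}_i\}_{i \in [1,n]_\integer}$, and then invoke standard Picard--Lindel\"of theory together with the compactness and positive invariance of $\simplex$ to obtain global existence and uniqueness of solutions.

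First I would localize. Fix $\x_0 \in \simplex \subseteq \real^N_+$ and let $I_0 \ldef \{i \in [1,n]_\integer : \x_0 \in \Mcal{C}_i\}$, which is nonempty since the $\Mcal{C}_i$ cover $\real^N_+$. For each $i \notin I_0$, closedness of $\Mcal{C}_i$ yields a ball around $\x_0$ disjoint from $\Mcal{C}_i$; as there are finitely many indices, intersecting these balls produces a ball $B_0$ centered at $\x_0$ that meets only those $\Mcal{C}_i$ with $i \in I_0$. Shrinking $B_0$ further, the local Lipschitzness of each $\dyn^i$ on $\Mcal{C}_i$ (for $i \in I_0$) gives a common radius on which $\dyn^i$ is $L_i$-Lipschitz on $B_0 \cap \Mcal{C}_i$; set $L \ldef \max_{i \in I_0} L_i$.

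The crux is to upgrade these per-piece estimates to a single estimate on $B_0 \cap \real^N_+$, and this is where the overlap-consistency hypothesis enters. Given $\x, \y \in B_0 \cap \real^N_+$ (a convex set), consider the segment $\gamma(t) \ldef (1-t)\x + t\y$, which stays in $B_0 \cap \real^N_+$ and is therefore covered by the closed sets $\{\Mcal{C}_i\}_{i \in I_0}$. The key combinatorial step is to produce a finite chain $0 = s_0 \le s_1 \le \dots \le s_m = 1$ such that each consecutive pair $\gamma(s_\ell), \gamma(s_{\ell+1})$ lies in a common $\Mcal{C}_{i_\ell}$. I would establish this by showing that the set of $t \in [0,1]$ reachable from $0$ through such a chain is both open and closed in $[0,1]$: at any $t$, the finitely many sets not containing $\gamma(t)$ stay a positive distance away, so every nearby segment point shares one of the sets containing $\gamma(t)$, which drives both the openness and the closedness arguments. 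Connectedness of $[0,1]$ then forces $1$ to be reachable. Along such a chain, the consistency condition gives $\dyn(\gamma(s_\ell)) = \dyn^{i_\ell}(\gamma(s_\ell))$ at both endpoints of each link, so telescoping with the triangle inequality and the collinearity of the $\gamma(s_\ell)$ yields $\|\dyn(\x) - \dyn(\y)\| \le L \sum_\ell \|\gamma(s_{\ell+1}) - \gamma(s_\ell)\| = L\|\x - \y\|$. Thus $\dyn$ is locally Lipschitz at $\x_0$, and compactness of $\simplex$ promotes this to a uniform Lipschitz bound on $\simplex$.

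I expect the chain construction across the boundaries of the regions $\Mcal{C}_i$ to be the main obstacle: the naive attempt to bound $\|\dyn^i(\x) - \dyn^j(\y)\|$ directly fails because $\dyn^i$ and $\dyn^j$ are distinct functions, and the remedy is precisely to route the estimate through shared points, where the overlap-consistency makes the relevant pieces agree. Finally, with $\dyn$ Lipschitz on $\simplex$, Picard--Lindel\"of gives a unique local solution from each $\x(0) \in \simplex$; since $\simplex$ is compact and positively invariant under~\eqref{eq:gen_dyn}, no solution can escape in finite time, so it extends uniquely to all $t \ge 0$.
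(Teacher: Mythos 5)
Your proposal is correct and follows the same overall strategy as the paper's proof: localize at an arbitrary point of $\real^N_+$, use closedness and finiteness of the cover to find a ball meeting only the pieces $\Mcal{C}_i$ containing the point, take the maximum of the per-piece Lipschitz constants, and conclude with compactness and positive invariance of $\simplex$ plus standard Picard--Lindel\"of theory. The one genuine difference is your chaining step along segments, and it is an improvement: the paper only establishes the centered estimate $\|\dyn(\y) - \dyn(\x)\| \leq \max_i L_{\Mcal{C}_i} \|\y - \x\|$ with $\x$ fixed at the center of the ball, and immediately calls this local Lipschitzness, whereas strictly speaking that is a pointwise (one-point-fixed) Lipschitz condition; upgrading it to a two-point estimate $\|\dyn(\y) - \dyn(\z)\| \leq L \|\y - \z\|$ for arbitrary nearby $\y, \z$ requires exactly the overlap-consistency chaining you carry out. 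Your open-and-closed reachability argument on $[0,1]$ is sound (the finitely many closed sets not containing $\gamma(t)$ stay at positive distance, so nearby segment points share a piece with $\gamma(t)$), and collinearity makes the link lengths telescope to $\|\x - \y\|$. So your write-up fills a small gap the paper glosses over; one can also observe that the paper's centered estimates, holding at every point with locally bounded constants, already suffice for uniqueness via a Gr\"onwall argument centered on one solution, so the ODE conclusion stands on either reading, but your version is the one that literally delivers the Lipschitz claim as stated in the lemma.
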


Next we delve deeper into a subset of this general class of
dynamics. The dynamics proposed in Sections \ref{sec:SSD} and
\ref{sec:NBRD} fall into this subclass. Although the dynamics proposed
in Section \ref{sec:NRPM} does not fall in this subclass, it still
belongs to the general class of flow balanced dynamics. We later use
results from this section to prove key results regarding the proposed
dynamics.

\subsection{Strongly Positively Correlated Flow Balanced Dynamics}

We call any flow balanced dynamics that additionally satisfy the
following criterion of \emph{strong positive correlation} as
\emph{strongly positively correlated flow balanced dynamics} or \emph{strongly positively correlated dynamics} for the sake of brevity.  

\begin{definition}\thmtitle{Strong positive correlation}\label{def:PC}
  We say that $\dyn(.) : \real^N \to \real^N$ in \eqref{eq:gen_dyn} is
  strongly positively correlated with $\lev(.)$ if $\,\forall \x \in \simplex$, $\delta_{ij} = 0$, $\forall (i,j) \in \arc$ such that $\lev_i(\x_i) \geq u_j(\x_j)$.  \bulletend
\end{definition}

Definition~\ref{def:PC} allows us to immediately analyze and make
useful conclusions about an arbitrary strongly positively correlated
flow balanced dynamics, such as their convergence properties. Note
that the evolution of the state $\x$ is constrained by the network,
and the dynamics~\eqref{eq:gen_dyn} is closely determined by the
incidence matrix. We first make a simple observation in the following
lemma, which rules out cyclic outflows. We prove the lemma in
Appendix~\ref{app:SPC-proofs}.

\begin{lemma}\label{lem:no_cycle}
  \thmtitle{Strong positive correlation and acyclic flow} %
  Consider the dynamics~\eqref{eq:gen_dyn} and suppose $\dyn(.)$
  satisfies strong positive correlation. For an arbitrary
  $\x \in \simplex$, suppose $\delta_{ij} \geq 0$,
  $\forall (i,j) \in \arc$ and let
  $\tilde{\arc} \ldef \{(i,j) \in \arc \,|\, \delta_{ij} > 0\}$. Then,
  the graph $\tilde{\flow} \ldef (\node,\tilde{\arc})$ is a directed
  acyclic graph. \bulletend
\end{lemma}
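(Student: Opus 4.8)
The plan is to argue by contradiction, exploiting the fact that strong positive correlation forces each active arc to point in the direction of strictly increasing payoff density. First I would rewrite Definition~\ref{def:PC} in its contrapositive form: since $\delta_{ij} \geq 0$ is assumed for all $(i,j) \in \arc$, and since $\lev_i(\x_i) \geq \lev_j(\x_j)$ would force $\delta_{ij} = 0$, it follows that every arc carrying positive outflow must have $\lev_i(\x_i) < \lev_j(\x_j)$. Equivalently, for every $(i,j) \in \tilde{\arc}$ we have the strict inequality $\lev_i(\x_i) < \lev_j(\x_j)$ between the payoff densities at the tail and the head. This single observation is the engine of the whole argument.

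Next I would suppose, toward a contradiction, that $\tilde{\flow} = (\node, \tilde{\arc})$ contains a directed cycle, say $i_1 \to i_2 \to \cdots \to i_k \to i_1$ with $(i_\ell, i_{\ell+1}) \in \tilde{\arc}$ for each $\ell$ and $i_{k+1} \ldef i_1$. Applying the strict-increase property established above to each arc along the cycle yields the chain $\lev_{i_1}(\x_{i_1}) < \lev_{i_2}(\x_{i_2}) < \cdots < \lev_{i_k}(\x_{i_k}) < \lev_{i_1}(\x_{i_1})$. Transitivity of the strict order then gives $\lev_{i_1}(\x_{i_1}) < \lev_{i_1}(\x_{i_1})$, which is impossible. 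Hence $\tilde{\flow}$ can contain no directed cycle, and therefore it is a directed acyclic graph, as claimed.

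I do not expect any genuine obstacle here: the argument is essentially the remark that positive outflow strictly raises the payoff density, so a cycle would require a quantity to strictly exceed itself. The only point that must be handled with care is strictness of the inequality, which is precisely why Definition~\ref{def:PC} uses $\lev_i(\x_i) \geq \lev_j(\x_j)$ (so that even equality of densities already kills the flow); without this the chain would only be non-strict and the contradiction would collapse. Note also that assumption \textbf{(A1)} is not invoked at all—the conclusion rests solely on the structure of strong positive correlation.
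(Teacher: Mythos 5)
Your proposal is correct and follows essentially the same route as the paper's own proof: assume a directed cycle in $\tilde{\flow}$, apply the contrapositive of strong positive correlation (with $\delta_{ij} \geq 0$ ensuring that $\delta_{ij} > 0$ forces the strict inequality $\lev_i(\x_i) < \lev_j(\x_j)$) along each arc, and derive the impossible chain $\lev_{i_1}(\x_{i_1}) < \cdots < \lev_{i_1}(\x_{i_1})$. Your closing remarks on why strictness matters and why assumption \textbf{(A1)} is not needed are accurate but go beyond what the paper states explicitly.
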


Next we characterize the equilibrium set of the
dynamics~\eqref{eq:gen_dyn}. In general, there may be equilibrium
points of the dynamics outside $\simplex$, but we are interested in
the ones that are in the simplex $\simplex$. This equilibrium set is
\begin{equation}
    \eqpt \ldef \{\x \in \simplex \,\,|\,\, \dyn(\x) =
    \bld{0} \} = \{\x \in \simplex \,\,|\,\, \Delta(\x) \in
    \ker(\inmat) \} .
    \label{eq:eqpt}
\end{equation}
Given Lemma~\ref{lem:no_cycle}, we can further refine this set to the
set of all $\x \in \simplex$ such that $\Delta(\x) = \bld{0}$. The
following lemma characterizes an important subset of the set of
equilibrium points of the dynamics, the significance of which is
illustrated in the remark following the lemma. The proof of the result
appears in Appendix~\ref{app:SPC-proofs}.

\begin{lemma} \label{lem:ne_fbd}
\thmtitle{Non-emptiness of equilibrium set for strongly positively correlated dynamics}
  Consider the dynamics~\eqref{eq:gen_dyn} and suppose $\dyn(.)$ is
  strongly positively correlated with $\lev(.)$. Also let $\delta_{ij} = 0$ $\forall j \in \neigh^i$, if $\x_i = 0$. Then, the set
  \begin{equation}
    \begin{split}
      &\ne := \big\{\x\in\simplex \,\,\big |\,\, \lev_i(\x_i)
      \geq \lev_j(\x_j),\\
      & \qquad \qquad \qquad \qquad \qquad \forall \, j \in \neigh^i,
      \forall \, i \in \supp(\x)\big\}\,.
    \end{split}
    \label{eq:Nash_eq}
  \end{equation}
  is non-empty and $\ne \subseteq \eqpt$, with $\eqpt$ as in
  \eqref{eq:eqpt}.  \bulletend
\end{lemma}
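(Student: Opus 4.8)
The plan is to establish the two assertions separately. The inclusion $\ne \subseteq \eqpt$ is the routine direction, which I would dispatch first by showing directly that every point of $\ne$ produces zero flow. The non-emptiness of $\ne$ is the substantive claim; here I would produce an explicit witness by maximizing the social utility $U$ of \eqref{eq:social_utility} over the simplex and reading off first-order optimality.

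For the inclusion, fix $\x \in \ne$ and show $\Delta(\x) = \bld{0}$, since then $\dyn(\x) = \inmat\,\Delta(\x) = \bld{0}$ and hence $\x \in \eqpt$ by \eqref{eq:eqpt}. Take an arbitrary arc $(i,j) \in \arc$. If $i \in \supp(\x)$, then $j \in \neigh^i$ and the defining condition \eqref{eq:Nash_eq} gives $\lev_i(\x_i) \geq \lev_j(\x_j)$, so strong positive correlation (Definition~\ref{def:PC}) forces $\delta_{ij} = 0$. If instead $i \notin \supp(\x)$, then $\x_i = 0$ and the standing hypothesis $\delta_{ij} = 0$ whenever $\x_i = 0$ applies. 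In either case $\delta_{ij} = 0$, so every component of $\Delta(\x)$ vanishes and $\x \in \eqpt$; as $\x$ was arbitrary, $\ne \subseteq \eqpt$.

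For non-emptiness I would use that $U$ is continuous on the compact set $\simplex$, hence attains its maximum at some $\x^\ast \in \simplex$ (strict concavity gives uniqueness, but only existence is needed here). The key is a variational argument: for any $i$ with $\x^\ast_i > 0$ and any $j \in \node$, $j \neq i$, shifting an infinitesimal mass $\epsilon > 0$ from node $i$ to node $j$ keeps the state in $\simplex$, since $\x^\ast_i - \epsilon \geq 0$ and $\x^\ast_j + \epsilon \leq \x^\ast_j + \x^\ast_i \leq 1$ for small $\epsilon$. Using $\bld{e}^T_k \nabla U(\x) = \lev_k(\x_k)$, the resulting first-order change in $U$ is $\epsilon\,[\lev_j(\x^\ast_j) - \lev_i(\x^\ast_i)] + o(\epsilon)$, which must be nonpositive for all small $\epsilon > 0$ by optimality of $\x^\ast$. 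Hence $\lev_i(\x^\ast_i) \geq \lev_j(\x^\ast_j)$ for every $i \in \supp(\x^\ast)$ and every $j \in \node$; specializing to $j \in \neigh^i$ shows $\x^\ast$ satisfies \eqref{eq:Nash_eq}, i.e. $\x^\ast \in \ne$, so $\ne \neq \varnothing$.

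The main obstacle I anticipate is making the variational step airtight at the boundary of $\simplex$, where the relevant derivatives of $\pe_i$ are one-sided: I must confirm that at a maximizing face or vertex the admissible perturbation directions are exactly those drawing mass out of occupied nodes, and that the one-sided derivatives $\lev_i(0)$ and $\lev_i(1)$ fixed in the setup are the correct first-order coefficients, so the inequality $\lev_i(\x^\ast_i) \geq \lev_j(\x^\ast_j)$ survives with the right conventions even when $\x^\ast_i \in \{0,1\}$. An equivalent and perhaps cleaner route is to invoke the KKT conditions for $\max_{\x \in \simplex} U(\x)$ --- linearity of the constraints supplies a constraint qualification --- to obtain a multiplier $\lambda$ with $\lev_i(\x^\ast_i) = \lambda$ on $\supp(\x^\ast)$ and $\lev_i(0) \leq \lambda$ off it, from which membership in $\ne$ is immediate.
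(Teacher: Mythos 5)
Your proposal is correct and takes essentially the same approach as the paper: the inclusion $\ne \subseteq \eqpt$ is argued exactly as the paper does (strong positive correlation forces $\delta_{ij}=0$ for $i \in \supp(\x)$, and the hypothesis $\delta_{ij}=0$ when $\x_i = 0$ covers the remaining arcs), and non-emptiness is witnessed by the maximizer $\x^*$ of $U$ over $\simplex$, which the paper extracts via the KKT conditions of the problem $\prob_1:\ \max_{\x \in \simplex} U(\x)$ --- precisely the ``equivalent and cleaner route'' you name at the end. Your primary variational derivation of $\lev_i(\x^*_i) \geq \lev_j(\x^*_j)$ by shifting mass $\epsilon$ from $i$ to $j$ is a sound substitute, and the boundary worry you flag is harmless since (A1) makes each $\pe_i$ twice continuously differentiable on $[0,1]$, so the one-sided derivatives coincide with $\lev_i$.
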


\begin{remark}\thmtitle{Nash equilibria}\label{rem:NE}
  The set $\ne$ in \eqref{eq:Nash_eq} characterizes the \emph{Nash
    equilibria} of the population game. If the population
  configuration $\x$ is in $\ne$, then no agent has an incentive to
  unilaterally deviate from its node (or choice). Note that the
  revision choices available to an agent are dependent on its current
  choice. Moreover, notice that the set of Nash equilibria in
  \eqref{eq:Nash_eq} is the same as the one in the case where the
  population fractions are not stratified, that is where $\lev_i(.)$'s
  are the average payoff functions. Also note that the population
  configuration that globally maximizes the social utility always lies
  in $\ne$.\bulletend
\end{remark}

\begin{example}\label{rem:ne_and_neg}
\thmtitle{Continuum of Nash equilibria}
Consider a node set $\node = \{1,2,3\}$ and let the cumulative payoff functions be of the form $\pe_i(y) = -0.5 \, y^2 - a_i \,y$. Thus the payoff density functions are of the form $\lev_i(y) = -y - a_i$. Let $a_1 = a_3 = 0$ and $a_2 = 5$. Consider the graph $\graph^1$ with node set $\node$ and edge set $\edg^1 = \{\{1,2\},\{2,3\}\}$. For this graph, the set of Nash Equilibria is $\ne^1 = \{\sigma[1,0,0]^T + (1-\sigma)[0,0,1]^T \,|\, \sigma \in [0,1]\}$. Now consider the graph $\graph^2$ with the same node set $\node$ but edge set $\edg^2 = \{\{1,2\},\{1,3\},\{2,3\}\}$. For this graph the set of Nash equilibria is $\ne^2 = \{[0.5,0,0.5]\}$ a singleton. Now, both graphs are connected but $\ne^2 \subset \ne^1$. This is because $\forall \bx \in \ne^1$, every path between nodes $1$ and $3$ (which are in $\supp(\bx)$) has node $2 \notin \supp(\bx)$. Thus the framework proposed in this paper is more general than the one considered in \cite{JBG-HT:2018:NEG, JBG-GO-AP-HT:2020:ne_neg_ifac}.
\bulletend
\end{example}

Finally in this section, we give sufficient conditions under which the
population state evolving under any flow balanced dynamics converges
asymptotically to an equilibrium point.

\begin{theorem}\thmtitle{Asymptotic convergence in flow balanced
    dynamics}\label{th:dyn_conv}
  Consider the dynamics \eqref{eq:gen_dyn} with $\delta_{ij} \geq 0$
  for all $(i,j) \in \arc$ and for all $\x \in \simplex$. Also,
  suppose that $\dyn(.)$ is strongly positively correlated with
  $\lev(.)$. Then, $\ne$ is non-empty and $\ne \subseteq \Mcal{X}$,
  the set of equilibrium points of~\eqref{eq:gen_dyn} in
  $\simplex$. Further, $\forall \,\x(0) \in \simplex$, $\x(t)$
  asymptotically converges to $\Mcal{X}$ and $U(\x(t))$ converges to a
  constant.
  % if every point in $\eqpt$ is a stable
  % equilibrium point then $\forall \,\x(0) \in \simplex$,
  % $\lim_{t \to \infty} \x(t) = \overline{\x}$ for some
  % $\overline{\x} \in \Mcal{X}$.
\end{theorem}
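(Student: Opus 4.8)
The plan is to handle the two assertions separately: the statement about $\ne$ is essentially a restatement of an earlier lemma, while the convergence claim will follow from using the social utility $U$ as a Lyapunov function together with LaSalle's invariance principle. First I would observe that the hypotheses of Lemma~\ref{lem:ne_fbd} are met here: the dynamics is flow balanced (so $\delta_{ij} = 0$ whenever $\x_i = 0$) and $\dyn(.)$ is strongly positively correlated with $\lev(.)$ by assumption. That lemma then yields directly that $\ne$ is non-empty and $\ne \subseteq \eqpt$, with $\eqpt$ the equilibrium set of~\eqref{eq:gen_dyn} in $\simplex$, settling the first part.

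For convergence I would take $U$ from~\eqref{eq:social_utility} as the candidate Lyapunov function and differentiate along the flow. Using $\nabla U(\x) = \lev(\x)$ and $\dot{\x} = \inmat\,\Delta(\x)$, and recalling that the column of $\inmat$ for arc $\arc_m = (i,j)$ carries $-1$ in row $i$ and $+1$ in row $j$,
\begin{equation*}
  \dot U(\x) = \lev(\x)^T \inmat\, \Delta(\x) = \sum_{(i,j) \in \arc} \big[ \lev_j(\x_j) - \lev_i(\x_i) \big]\, \delta_{ij}(\x) .
\end{equation*}
Each summand is non-negative: if $\delta_{ij} > 0$ then strong positive correlation (Definition~\ref{def:PC}) forces $\lev_i(\x_i) < \lev_j(\x_j)$, so the bracket is strictly positive, while if $\delta_{ij} = 0$ the term vanishes. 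Hence $\dot U(\x) \geq 0$ on $\simplex$. Since $\simplex$ is compact and $U$ is continuous, $U$ is bounded above on $\simplex$, so $t \mapsto U(\x(t))$ is non-decreasing and bounded and therefore converges to a constant; this already establishes the last assertion of the theorem.

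To upgrade monotonicity of $U$ into convergence of $\x(t)$ I would characterize the set $E \ldef \{ \x \in \simplex \,|\, \dot U(\x) = 0\}$. Because the sum above has non-negative terms, $\dot U(\x) = 0$ forces every term to vanish; but by strong positive correlation a positive flow $\delta_{ij} > 0$ produces a strictly positive term, so $\dot U(\x) = 0$ if and only if $\delta_{ij}(\x) = 0$ for all $(i,j) \in \arc$, that is $\Delta(\x) = \bld{0}$. By Lemma~\ref{lem:no_cycle} the active-flow graph is acyclic, and as noted after~\eqref{eq:eqpt} this refines $\eqpt$ to exactly $\{\x \in \simplex \,|\, \Delta(\x) = \bld{0}\}$; hence $E = \eqpt$. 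Since $\simplex$ is compact and positively invariant under~\eqref{eq:gen_dyn} and $U$ is $C^2$ with $\dot U \geq 0$, LaSalle's invariance principle applies and every solution converges to the largest invariant set contained in $E$. As $E = \eqpt$ consists entirely of equilibria it is itself invariant, so the largest such set is $\eqpt$, giving $\x(t) \to \eqpt$ for all $\x(0) \in \simplex$.

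The main obstacle I anticipate is the exact identification $E = \eqpt$, which hinges on combining the sign argument from strong positive correlation with the acyclicity refinement of $\eqpt$ from Lemma~\ref{lem:no_cycle}; without acyclicity one obtains only $\{\dot U = 0\} = \{\Delta = \bld{0}\} \subseteq \eqpt$, and then the invariance of $E$ required by LaSalle would need a separate justification. A secondary technical point is regularity: because $\dyn$ is in general only piecewise-defined, I would either invoke the Lipschitz conditions guaranteeing existence and uniqueness (Lemma~\ref{lem:gen_dyn_EU}) before applying LaSalle, or argue directly via the $\omega$-limit set, which is non-empty, compact, connected and invariant by boundedness, on which $U$ is constant so that $\dot U \equiv 0$ there, placing it inside $E = \eqpt$.
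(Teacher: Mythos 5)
Your proposal is correct and follows essentially the same route as the paper's proof: the paper takes $V \ldef -U$ as a Lyapunov-like function, computes $\dot V = \sum_{(i,j) \in \arc} \delta_{ij}\,[\lev_i(\x_i) - \lev_j(\x_j)] \leq 0$ from strong positive correlation, and concludes via Lemma~\ref{lem:ne_fbd} and LaSalle's invariance principle, which is exactly your sign-flipped argument with $U$. Your explicit identification of $\{\x \in \simplex \,|\, \dot U(\x) = 0\}$ with $\eqpt$ via Lemma~\ref{lem:no_cycle}, and your caveat about Lipschitz regularity before invoking LaSalle, merely spell out details the paper leaves implicit rather than constituting a different method.
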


\begin{proof}
  Recall that Lemma \ref{lem:ne_fbd} guarantees that $\ne$ is
  non-empty and that $\ne \subseteq \Mcal{X}$. In order to prove the
  claim about convergence, consider the Lyapunov-like function
  $V(.) \ldef -U(.)$, see~\eqref{eq:social_utility}. Note that as
  $U(.)$ is strictly concave, $V(.)$ is strictly convex. Now, along
  the trajectories of~\eqref{eq:gen_dyn},
\begin{equation}
  \begin{split}
    \dot{V} & = (\nabla V)^T \inmat \, \Delta = - (\nabla U)^T \inmat
    \, \Delta \\
    % & = - \sum_{(i,j) \in \arc} \delta_{ij} \left(\frac{\partial
    % U}{\partial \x_j}- \frac{\partial U}{\partial \x_i}\right) \\
    & = \sum_{(i,j) \in \arc} \delta_{ij} \,[u_i(\x_i) - u_j(\x_j)]
    \,.
  \end{split}
  \label{eq:V_dot}
\end{equation}
This expression is readily obtained from the equivalence
of~\eqref{eq:gen_dyn} and~\eqref{eq:fbd} and after regrouping the
terms. As $\dyn$ in~\eqref{eq:gen_dyn} is strongly positively
correlated with $\lev(.)$, we have $u_i(\x_i) \geq u_j(\x_j)$ implies
$\delta_{ij} = 0$, which is equivalent to $\delta_{ij} > 0$ implies
$u_i(\x_i) < u_j(\x_j)$. The contrapositive has $\delta_{ij} > 0$
rather than $\delta_{ij} \neq 0$ as, again by the assumption of the
theorem, $\delta_{ij} \geq 0$. We thus have $\dot{V} \leq 0$ and since
the simplex $\simplex$ is positively invariant, LaSalle's invariance
principle~\cite{HK:2002:nsc} says that $\x$ asymptotically converges
to the set $\eqpt$. The convergence of $U(\x(t))$ to a constant is
also guaranteed by LaSalle's invariance principle.
% Now, for a given trajectory $\x(t)$ starting from $\x(0)$, let
% $\Mcal{L}^+$ be positive limit set. Notice that
% $\Mcal{L}^+ \subseteq \eqpt$ . Suppose $\Mcal{L}^+$ contains more than
% one element and let $\x^1, \x^2 \in \Mcal{L}^+$ such that
% $\x^1 \neq \x^2$. Thus, $\exists$ sequences $\{t^1_k\}$ and
% $\{t^2_k\}$ with $t^1_k,t^2_k \to \infty$ as $k \to \infty$ such that
% $\x(t^1_k) \to \x^1$ and $\x(t^2_k) \to \x^2$.  Now, by assumption,
% $\x^1$ and $\x^2$ are stable equilibrium points. Choose
% $0 < \varepsilon < \| \x^1 - \x^2 \| / 2$, then
% $\exists \, \varrho > 0$ such that if
% $\x(t) \in \Mcal{B}_2(\x^1,\varrho)$ then
% $\x(t + \tau) \in \Mcal{B}_2(\x^1,\varepsilon)$,
% $\forall \tau \geq 0$. Also, $\exists \, K_1, K_2 $ and $K_3$ such
% that $k \geq K_1$ implies $\|\x(t^1_k) - \x^1\| < \varrho$;
% $k \geq K_2$ implies $\|\x(t^2_k) - \x^2\| < \varepsilon$ and
% $k \geq K_3$ implies $t^2_k > t^1_{K_1}$. Let
% $K \ldef \max \{K_1, K_2, K_3\}$. Then $\forall k \geq K$,
% $\x(t^2_k) \in \Mcal{B}_2(\x^1,\varepsilon)$ as
% $\x(t^1_k) \in \Mcal{B}_2(\x^1,\varrho)$ and $t^2_k > t^1_{K_1}$. Also
% $\forall k \geq K$, $\x(t^2_k) \in \Mcal{B}_2(\x^2,\varepsilon)$ as
% $k \geq K \geq K_2$. This is a contradiction given the choice of
% $\varepsilon$. Thus the assumption that $\Mcal{L}^+$ has more than one
% element is incorrect and hence $\Mcal{L}^+$ must be a singleton
% set. Thus we can say that the trajectories of the system
% \eqref{eq:gen_dyn} asymptotically converge to some point in the set of
% equilibrium points $\Mcal{X}$.
\end{proof}

\begin{remark}\thmtitle{Two interpretations of the rate of change
    of Lyapunov-like functions for flow balanced dynamics}
  In \eqref{eq:V_dot}, two different ways of looking at the rate of
  change of Lyapunov-like function $V$ along the trajectories of
  \eqref{eq:gen_dyn} is provided. In the first line, we are directly
  computing the rate of change of cumulative payoff of the fraction in
  each node and summing them up. In the second line, we are instead
  computing the rate of change in the average payoff of the newest
  strata due to each individual inter-nodal outflows and then adding
  them up. This alternate perspective makes the computation and
  bounding of the rate of change of the Lyapunov-like function
  particularly easy. \bulletend
\end{remark}

In this paper, we are interested more in the class of dynamics that
converges to a population state in the set of Nash Equilibria. The
following corollary provides a sufficient condition for that to
happen, namely $\eqpt = \ne$.
\begin{corollary}
  Suppose the hypothesis in Theorem \ref{th:dyn_conv} holds. In
  addition suppose $\dyn(\x) \neq \bld{0}$ for all $\x \notin
  \ne$. Then $\forall \,\x(0) \in \simplex$, $\x(t)$ asymptotically
  converges to $\ne$. \bulletend
\end{corollary}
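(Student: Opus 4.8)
The plan is to show that the extra hypothesis collapses the equilibrium set $\eqpt$ onto the Nash equilibrium set $\ne$, after which the claimed convergence follows immediately from Theorem~\ref{th:dyn_conv}. Concretely, I would establish the two inclusions $\eqpt \subseteq \ne$ and $\ne \subseteq \eqpt$ separately, conclude $\eqpt = \ne$, and then simply substitute into the convergence statement already proved in the theorem.

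First I would recall that $\eqpt$, as defined in~\eqref{eq:eqpt}, is precisely the zero set of $\dyn(.)$ on the simplex, namely $\eqpt = \{\x \in \simplex \,|\, \dyn(\x) = \bld{0}\}$. The additional assumption states that $\dyn(\x) \neq \bld{0}$ for every $\x \notin \ne$; taking the contrapositive, $\dyn(\x) = \bld{0}$ forces $\x \in \ne$. Restricting attention to points of $\simplex$, this is exactly the inclusion $\eqpt \subseteq \ne$.

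Next I would invoke Theorem~\ref{th:dyn_conv}, whose hypotheses are assumed to hold here, to obtain the reverse inclusion $\ne \subseteq \eqpt$ together with the non-emptiness of $\ne$. Combining the two inclusions yields $\eqpt = \ne$. Since Theorem~\ref{th:dyn_conv} already guarantees that for every $\x(0) \in \simplex$ the trajectory $\x(t)$ asymptotically converges to $\eqpt$, substituting $\eqpt = \ne$ delivers the desired conclusion that $\x(t)$ converges to $\ne$.

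There is no genuine analytic obstacle in this argument — the corollary is a purely set-theoretic refinement of the theorem and requires no new Lyapunov or invariance reasoning. The only points meriting care are the correct reading of the contrapositive of the extra hypothesis and the identification, via~\eqref{eq:eqpt}, of $\eqpt$ with the zero set of $\dyn(.)$ on $\simplex$; both are routine, so I would keep the write-up to these few lines.
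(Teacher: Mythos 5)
Your argument is correct and is precisely the intended one: the paper leaves this corollary unproved as an immediate consequence of Theorem~\ref{th:dyn_conv}, and your observation that the extra hypothesis gives $\eqpt \subseteq \ne$ (via the contrapositive and~\eqref{eq:eqpt}), while the theorem supplies $\ne \subseteq \eqpt$ and convergence to $\eqpt$, is exactly the substitution the authors have in mind.
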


%Recall that in the problem set up, we normalize the population and say that the population state $\x \in \simplex$. From here on we let the Nash equilibria of the population states of this format be $\ne \ldef \ne$. Thus we omit the subscript wherever there is no confusion.

\section{Stratified Smith Dynamics} \label{sec:SSD}

In this section, we assume that the agents in the population are
selfish and revise their choices independently. The dynamics we
present for this scenario shares the spirit of the standard Smith
dynamics~\cite{WHS:2010:pop_games} but is different due to stratified
population fractions. We first derive the Smith dynamics for the
stratified population setting. Then, we show that the dynamics
satisfies the strong positive correlation property in
Definition~\ref{def:PC}, which then immediately allows us to conclude
about the convergence properties of the dynamics.

Recall that the $\lev_i(.)$ functions are strictly decreasing. This
implies that the agents that are in a newer strata receive lower
payoffs than the ones that are in an older strata. Recall that, when
an infinitesimal agent in node $i \in \node$ decides to switch to a
node $j \in \neigh^i$, it gets to enter the newest strata $[\x_j]$ of
node $j$. Thus, agents in the strata $[y]$ of node $i \in \node$
compare their payoff density, $\lev_i(y)$, with the payoff density,
$\lev_j(\x_j)$, of agents in strata $[\x_j]$ of node $j \in \neigh^i$
while deciding whether to switch to node $j$. The net outflow from a
node $i \in \node$ to a node $j \in \neigh^i$ is a result of the
agents in each population strata $[y]$ of $i$ taking a decision to
switch to $j$ with a probability $\relu{\lev_j(\x_j) - \lev_i(y)}$
(normalized appropriately) as in the Smith dynamics. Thus the net
outflow from $i$ to $j$ (in the expected sense) is
\begin{equation}
  \delta_{ij} = \int_0^{\x_i} \relu{\lev_j(\x_j) - \lev_i(y)}
  \mathrm{d}y = \int_{y_{ij}}^{\x_i} \left[ \lev_j(\x_j) - \lev_i(y) \right]
  \mathrm{d}y ,
  \label{eq:sd_delta_int}
\end{equation}
where %$y_{ij} \ldef \lev_i^{-1}(\lev_j(\x_j))$
\begin{equation}
	y_{ij} \ldef 
	\begin{cases}
		0, & \text{if } \lev_i^{-1}(\lev_j(\x_j)) < 0 \\
		\lev_i^{-1}(\lev_j(\x_j)), & \text{if } 0 \leq \lev_i^{-1}(\lev_j(\x_j)) \leq \x_i \\
		\x_i, & \text{if } \lev_i^{-1}(\lev_j(\x_j)) > \x_i \, .
	\end{cases}
	\label{eq:yij}
\end{equation}
In the second equality of \eqref{eq:sd_delta_int}, we have used the
fact that $\lev_i(.)$ is a strictly decreasing function. Note that as
$\lev_i(.)$ is a strictly decreasing continuously differentiable
function, its inverse also exists. So,
\begin{equation}
  	\delta_{ij} = \relu{ \lev_j(\x_j) ( \x_i - y_{ij} ) - ( \pe_i(\x_i) -
    \pe_i(y_{ij}) )} .
  \label{eq:sd_delta_ij}
\end{equation}
From \eqref{eq:sd_delta_ij}, it is clear that $\delta_{ij} \geq 0$,
$\forall (i,j) \in \arc$. Then the dynamics in~\eqref{eq:gen_dyn} with
$\delta_{ij}$ defined in~\eqref{eq:sd_delta_ij} is what we call as the
\emph{stratified Smith dynamics} (SSD).

\begin{remark}\thmtitle{Existence and uniqueness of solutions for SSD}
  Recall that $\forall i \in \node$, $\pe_i(.)$ is a twice
  continuously differentiable function and hence $\lev_i(.)$ is
  strictly decreasing and continuously differentiable. As a result,
  the derivative of $\lev_i(.)$ is not zero at any point in
  [0,1]. Thus, by the inverse function theorem, $\lev^{-1}_i(.)$ is
  also differentiable and locally Lipschitz in [0,1]. Consequently,
  $\inmat \, \Delta(.)$ as a whole, is locally Lipschitz on
  $\simplex$. Then Lemma~\ref{lem:gen_dyn_EU} guarantees the existence
  and uniqueness of solutions of SSD $\forall t \geq 0$ and for each
  initial condition $\x(0) \in \simplex$. \bulletend
\end{remark}

\subsection{Convergence in Stratified Smith Dynamics}

We first show that SSD satisfies Definition~\ref{def:PC} and that the
set of equilibrium points $\eqpt = \ne$ in the following lemma and
subsequently we show that all solutions of SSD converge asymptotically
to the equilibrium set $\eqpt = \ne$. The proof of the following lemma
appears in Appendix~\ref{app:SSD-lemma}.

\begin{lemma}\thmtitle{Strong positive correlation and equilibria of
    SSD} \label{lem:SSD-spc-eqpt}
  Suppose $\forall \, i \in \node$, $\lev_i(.)$ is strictly
  decreasing. Then, SSD, dynamics in \eqref{eq:gen_dyn} with
  $\delta_{ij}$ defined in \eqref{eq:sd_delta_ij}, is strongly
  positively correlated with $\lev(.)$. Further, the set of
  equilibrium points of SSD in $\simplex$, \emph{i.e.} $\eqpt$, is the
  set $\ne$ in \eqref{eq:Nash_eq}.%  and every point in $\ne$ is a
  % stable equilibrium point
  \bulletend
\end{lemma}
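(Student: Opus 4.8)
The plan is to establish the two assertions separately, working throughout from the integral representation \eqref{eq:sd_delta_int} rather than the closed form \eqref{eq:sd_delta_ij}, since the sign of the integrand is controlled directly by the monotonicity of $\lev_i(.)$.

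For the strong positive correlation claim, I would fix an arbitrary $\x \in \simplex$ and an arc $(i,j) \in \arc$ with $\lev_i(\x_i) \geq \lev_j(\x_j)$ (recall $u_j(\x_j) = \lev_j(\x_j)$). Since $\lev_i(.)$ is strictly decreasing, every $y \in [0,\x_i]$ satisfies $\lev_i(y) \geq \lev_i(\x_i) \geq \lev_j(\x_j)$, so $\lev_j(\x_j) - \lev_i(y) \leq 0$ and hence $\relu{\lev_j(\x_j) - \lev_i(y)} = 0$ on the entire interval. The integral in \eqref{eq:sd_delta_int} then vanishes, giving $\delta_{ij} = 0$, which is precisely the condition in Definition \ref{def:PC}.

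For $\eqpt = \ne$ I would prove the two inclusions. First, Lemma \ref{lem:ne_fbd} yields $\ne \subseteq \eqpt$: its hypotheses hold because strong positive correlation was just established, and because $\x_i = 0$ forces $y_{ij} = 0 = \x_i$ in every branch of \eqref{eq:yij}, whence $\delta_{ij} = 0$ by \eqref{eq:sd_delta_int}. For the reverse inclusion I would argue by contraposition. By Lemma \ref{lem:no_cycle} the equilibrium set reduces to $\{\x \in \simplex \,|\, \Delta(\x) = \bld{0}\}$, i.e. $\delta_{ij} = 0$ on every arc. Suppose $\x \notin \ne$; then there exist $i \in \supp(\x)$ and $j \in \neigh^i$ with $\lev_i(\x_i) < \lev_j(\x_j)$. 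Since $i \in \supp(\x)$ we have $\x_i > 0$, and evaluating the integrand at $y = \x_i$ gives $\lev_j(\x_j) - \lev_i(\x_i) > 0$. Continuity of $\lev_i(.)$ then produces an interval $(\x_i - \varepsilon, \x_i] \subseteq [0,\x_i]$ on which the integrand is strictly positive, so $\delta_{ij} > 0$ and therefore $\x \notin \eqpt$. This establishes $\eqpt \subseteq \ne$, and combined with the first inclusion gives $\eqpt = \ne$.

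The delicate points are mostly bookkeeping: verifying the $\x_i = 0$ boundary case so that Lemma \ref{lem:ne_fbd} applies, and upgrading pointwise positivity of the integrand to strict positivity of $\delta_{ij}$ via continuity. I expect the main (if modest) obstacle to be the reverse inclusion $\eqpt \subseteq \ne$, where one must convert a single violated payoff inequality into strict positivity of the corresponding flow; the strict monotonicity and continuity of $\lev_i(.)$ make this clean, but it is the step that genuinely exploits the structure of SSD.
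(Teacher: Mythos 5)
Your proof is correct, and it follows the paper's skeleton --- establish strong positive correlation, invoke Lemma~\ref{lem:no_cycle} to reduce $\eqpt$ to $\{\x \in \simplex \,|\, \Delta(\x) = \bld{0}\}$, then characterize when $\delta_{ij} = 0$ --- but your local arguments differ from the paper's in three places. For strong positive correlation, the paper argues through the closed form: $\lev_i(\x_i) \geq \lev_j(\x_j)$ forces $\x_i \leq \lev_i^{-1}(\lev_j(\x_j))$, hence $y_{ij} = \x_i$ in \eqref{eq:yij} and a vanishing integral; your pointwise observation that the integrand $\relu{\lev_j(\x_j) - \lev_i(y)}$ vanishes on all of $[0,\x_i]$ is more elementary and bypasses $\lev_i^{-1}$ entirely. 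For $\ne \subseteq \eqpt$ you delegate to Lemma~\ref{lem:ne_fbd} after verifying the $\x_i = 0$ boundary hypothesis (which, incidentally, follows even more directly from $\int_0^0 = 0$ in \eqref{eq:sd_delta_int} than from your branch analysis of \eqref{eq:yij}), whereas the paper proves both inclusions simultaneously through a single case analysis on $\delta_{ij} = 0$. For the reverse inclusion, the paper rules out the sub-case ``$y_{ij} < \x_i$ yet zero integral'' by the sandwich $\lev_j(\x_j)(\x_i - y_{ij}) \leq \int_{y_{ij}}^{\x_i} \lev_i(y)\,\mathrm{d}y < \lev_i(y_{ij})(\x_i - y_{ij}) \leq \lev_j(\x_j)(\x_i - y_{ij})$, which needs only strict monotonicity and integrability; you instead take the contrapositive and upgrade positivity of the integrand at $y = \x_i$ to $\delta_{ij} > 0$ via continuity of $\lev_i(.)$ --- the same content, but note that continuity is not literally in the lemma's hypothesis (which states only strict decrease) and is supplied by the standing assumption (A1), on which the paper's use of $\lev_i^{-1}$ in \eqref{eq:yij} also silently relies. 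Both routes are sound; yours is marginally more modular in reusing the general machinery of Section~\ref{sec:gen_dyn}, the paper's marginally more self-contained.
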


Now, we can state the result on asymptotic convergence in stratified
Smith dynamics.

\begin{theorem}\thmtitle{SSD asymptotically converges to the Nash
    equilibrium set} \label{th:ssd_conv}
  Let the evolution of $\x$ be governed by SSD, dynamics in
  \eqref{eq:gen_dyn} with $\delta_{ij}$ defined in
  \eqref{eq:sd_delta_ij}, with an initial condition
  $\x(0) \in \simplex$. %Then as $t \to \infty$, $\x(t)$ approaches
  Then $\x(t)$ converges asymptotically to $\ne$,
  defined in \eqref{eq:Nash_eq}, and $U(\x(t))$ converges to a
  constant.
\end{theorem}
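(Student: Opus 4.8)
The plan is to obtain Theorem~\ref{th:ssd_conv} as an essentially immediate
consequence of the two results already established in the excerpt. The
structure of the argument is to verify that SSD satisfies the hypotheses of
Theorem~\ref{th:dyn_conv} and then invoke that theorem directly.

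First I would recall from the construction of SSD in
\eqref{eq:sd_delta_int}--\eqref{eq:sd_delta_ij} that $\delta_{ij} \geq 0$ for
all $(i,j) \in \arc$ and for all $\x \in \simplex$; this was observed right
after \eqref{eq:sd_delta_ij}. Next I would appeal to
Lemma~\ref{lem:SSD-spc-eqpt}, which (under assumption \textbf{(A1)}, so that
each $\lev_i(.)$ is strictly decreasing) guarantees two facts: that $\dyn(.)$
for SSD is strongly positively correlated with $\lev(.)$ in the sense of
Definition~\ref{def:PC}, and that the equilibrium set satisfies
$\eqpt = \ne$. These are exactly the two standing hypotheses of
Theorem~\ref{th:dyn_conv}, namely nonnegativity of the outflows and strong
positive correlation.

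With these verified, Theorem~\ref{th:dyn_conv} applies verbatim and yields
that, for every initial condition $\x(0) \in \simplex$, the trajectory $\x(t)$
converges asymptotically to the equilibrium set $\eqpt$ and that $U(\x(t))$
converges to a constant. The final step is to substitute the identity
$\eqpt = \ne$ furnished by Lemma~\ref{lem:SSD-spc-eqpt}, which upgrades
``converges to $\eqpt$'' into ``converges to $\ne$'' as claimed in the theorem
statement. This closes the argument.

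I do not expect any genuine obstacle in this particular proof, since all the
analytical work has been front-loaded. The conceptually delicate content lives
entirely in the supporting lemma: the proof of Lemma~\ref{lem:SSD-spc-eqpt}
(deferred to Appendix~\ref{app:SSD-lemma}) must show both the strong positive
correlation property---which requires checking that $\delta_{ij} = 0$ whenever
$\lev_i(\x_i) \geq \lev_j(\x_j)$, most cleanly via the threshold $y_{ij}$ in
\eqref{eq:yij} and the $\relu{\cdot}$ form of \eqref{eq:sd_delta_ij}---and the
sharper characterization $\eqpt = \ne$, which (beyond the containment
$\ne \subseteq \eqpt$ already given by Lemma~\ref{lem:ne_fbd}) needs the
reverse inclusion $\eqpt \subseteq \ne$, i.e.\ that any equilibrium of SSD must
in fact be a Nash equilibrium. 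If I were proving the theorem from scratch
rather than from the stated lemmas, that reverse inclusion, exploiting the
acyclicity of the active outflow graph from Lemma~\ref{lem:no_cycle} to
propagate the payoff-density inequalities along directed paths, would be the
step demanding the most care.
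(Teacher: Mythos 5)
Your proposal is correct and matches the paper's own proof, which likewise obtains Theorem~\ref{th:ssd_conv} as a direct consequence of Lemma~\ref{lem:SSD-spc-eqpt} (strong positive correlation and $\eqpt = \ne$) together with Theorem~\ref{th:dyn_conv}. Your additional remarks correctly locate the substantive work in the appendix proof of Lemma~\ref{lem:SSD-spc-eqpt}, so nothing is missing.
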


\begin{proof}
  This is a direct consequence of Lemma~\ref{lem:SSD-spc-eqpt} and
  Theorem \ref{th:dyn_conv}.
\end{proof}

\section{Nodal Best Response Dynamics} \label{sec:NBRD}

In this section, we assume that the infinitesimal agents in each node
$i \in \node$ coordinate and seek to maximize the overall payoff that
the fraction $\x_i$ receives by redistributing itself among its
neighbors. Thus, we call this dynamics as \emph{nodal best response
  dynamics} (NBRD). In this section, we present the NBRD dynamics and
analyze its properties such as existence and uniqueness of solutions
and convergence.

Here, like in Section \ref{sec:SSD}, the agents in a node react purely
in response to the current population configuration and in fact assume
that the fractions in other nodes would not revise their
choices. Further, if from a node $i \in \node$ a fraction of
population $\delta_{ij} \leq \x_i$ decides to switch over to node
$j \in \neigh^i$, they assume that they would occupy the strata
$[\x_j,\x_j+\delta_{ij}]$ in node $j$. Under these assumptions, the
fraction $\x_i$ in node $i$ determines the reallocation
$\{\d^*_{ij} \}_{j \in \bneigh^{\,i}}$ of $\x_i$ among the nodes $i$
and its neighbors that maximizes the overall payoff of the fraction
$\x_i$. Then, the outflows $\delta_{ij} = \d^*_{ij}$ for all
$j \in \neigh^i$ and for all $i \in \node$. This behaviour is in the
spirit of best response dynamics \cite{WHS:2010:pop_games} in
evolutionary dynamics but from the fraction $\x_i$'s point of view
rather than from the point of view of infinitesimal agents.

Specifically, the outflows $\delta_{ij}$ from node $i$ are equal to
$\d^*_{ij}$, the optimizer of the problem
\begin{align}
  \label{eq:p_nbrd-prob}
  \prob^{\,i}_2:\\
  \notag \max_{\{\d_{ij} | j \in \bneigh^{\,i}\}}%
  & \sum_{j \in \neigh^i}[\pe_j(\x_j+\d_{ij}) - \pe_j(\x_j)] +
    [\pe_i (\d_{ii}) - \pe_i(0)]\\
  \notag \text{s.t. } & \d_{ii} + \sum_{j \in \neigh^i}
                        \d_{ij} = \x_i, \quad \d_{ij} \geq 0, 
                        \ \forall j \in \bneigh^{\,i}. 
\end{align}
Note that $\pe_j(\x_j+\d_{ij}) - \pe_j(\x_j)$ is the cumulative payoff
received by the agents in strata $[\x_j,\x_j+\d_{ij}]$ of node
$j \in \neigh^i$ if the fraction $\d_{ij}$ chooses to move to node $j$
under the assumption that the agents in node $j$ do not revise their
choice. The non-negativity constraints $\d_{ij} \geq 0$ and
$\d_{ii} \geq 0$ ensure that $\d_{ij}$'s are outflows and $\d_{ii}$ is
the fraction remaining in node $i$. In order to model and analyze the
dynamics of the population in such a scenario, it is useful to analyze
the optimizers of $\prob^{\,i}_2$. The following lemma lists some
important properties of the said optimizers. We provide its proof in
Appendix~\ref{app:NBRD-proofs}.

\begin{lemma}\label{lem:opt_prob2i}\thmtitle{Optimizers of $\prob^{\,i}_2$}
  For all $\x \in \simplex$, the problem $\prob^{\,i}_2$ in
  \eqref{eq:p_nbrd-prob} possesses a unique optimizer,
  $\{\d^*_{ij} \}_{j \in \bneigh^{\,i}}$. Moreover, if $\x_i > 0$
  then there exists a constant $\eta_i$ such that
\begin{subequations}
  \begin{align}
    &\lev_i(\d^*_{ii}) = \eta_i, \ \qquad \quad
      \text{if}\,\,\d^*_{ii} > 0, \label{eq:level_same_own}\\
    &\lev_j(\x_j + \d^*_{ij}) =
      \eta_i, \quad \forall j \in
      \neigh^i \,\,\text{s.t.}\,\,
      \d^*_{ij} > 0, \label{eq:level_same_neigh} \\
    &\lev_j(\x_j) \leq \eta_i, \ \qquad \quad \forall j \in \bneigh^i
      \,\,\text{s.t.}\,\, \d^*_{ij} = 0 \,. \label{eq:level_diff}
  \end{align}
\end{subequations}
%Also,
%\begin{equation}
%  \lev_j(\x_j + \d^*_{ij}) \geq \lev_i(\d^*_{ii}), \,\, \forall j \in \neigh^i \,\,\text{s.t.}\,\, \d^*_{ij} > 0 \,.
%  \label{eq:level_order1}
%\end{equation}
\bulletend
\end{lemma}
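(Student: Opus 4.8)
The plan is to treat $\prob^{\,i}_2$ as a concave maximization over a compact convex set and read the three stated conditions off its Karush--Kuhn--Tucker (KKT) system. First I would settle existence and uniqueness, which do not require $\x_i>0$. For fixed $\x \in \simplex$ the feasible set $\{\d \geq 0 : \sum_{j \in \bneigh^{\,i}} \d_{ij} = \x_i\}$ is a scaled simplex, hence compact and convex, and every feasible allocation satisfies $\d_{ij} \in [0,\x_i]$ so that each argument $\x_j + \d_{ij}$ lies in $[0,1]$ (using $\x_i + \x_j \leq \one^T \x = 1$), where $\pe_j$ is defined by (A1). Writing $J(\d)$ for the objective, $J$ is continuous, so Weierstrass' theorem supplies a maximizer. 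Since $J$ is a sum of the terms $\pe_j(\x_j + \d_{ij})$ and $\pe_i(\d_{ii})$, each depending on a single coordinate and each strictly concave by (A1), it is jointly strictly concave in $\d$; a strictly concave function has at most one maximizer over a convex set, which yields uniqueness of $\{\d^*_{ij}\}_{j \in \bneigh^{\,i}}$.

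For the characterization, assume $\x_i > 0$ and form the Lagrangian with a single multiplier $\eta_i$ for the equality constraint $\sum_{j \in \bneigh^{\,i}} \d_{ij} = \x_i$ and multipliers $\mu_{ij} \geq 0$ for the constraints $\d_{ij} \geq 0$. Since all constraints are affine, a constraint qualification holds automatically and the KKT conditions are necessary for optimality; as the objective is concave and the feasible set convex, they are also sufficient. Computing the partial derivatives of $J$ gives $\partial J / \partial \d_{ij} = \lev_j(\x_j + \d_{ij})$ for $j \in \neigh^i$ and $\partial J / \partial \d_{ii} = \lev_i(\d_{ii})$, so stationarity reads $\lev_j(\x_j + \d^*_{ij}) = \eta_i - \mu_{ij}$ (with the argument replaced by $\d^*_{ii}$ when $j = i$), together with complementary slackness $\mu_{ij}\,\d^*_{ij} = 0$.

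Reading off the three cases then finishes the proof. Whenever $\d^*_{ij} > 0$, complementary slackness forces $\mu_{ij} = 0$, giving $\lev_j(\x_j + \d^*_{ij}) = \eta_i$ for $j \in \neigh^i$ and $\lev_i(\d^*_{ii}) = \eta_i$ for $j = i$; these are exactly \eqref{eq:level_same_neigh} and \eqref{eq:level_same_own}. Whenever $\d^*_{ij} = 0$ with $j \in \neigh^i$, nonnegativity of $\mu_{ij}$ gives $\lev_j(\x_j) = \eta_i - \mu_{ij} \leq \eta_i$, which is \eqref{eq:level_diff}; for the home term the same argument yields $\lev_i(0) \leq \eta_i$, and since $\lev_i$ is strictly decreasing and $\x_i \geq 0$ we obtain $\lev_i(\x_i) \leq \lev_i(0) \leq \eta_i$, so \eqref{eq:level_diff} also holds at $j = i$.

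I expect the main difficulty to be conceptual rather than computational: one must recognize that a single scalar threshold $\eta_i$ governs every coordinate, which is the ``water-filling'' structure of the optimizer and is precisely what the equality multiplier provides. The secondary subtlety is the home node, whose marginal payoff is evaluated at $\d^*_{ii}$ rather than at $\x_i + \d^*_{ii}$; the monotonicity of $\lev_i$ is exactly what reconciles the genuine boundary condition $\lev_i(0) \leq \eta_i$ with the uniform statement $\lev_j(\x_j) \leq \eta_i$ claimed for all $j \in \bneigh^{\,i}$.
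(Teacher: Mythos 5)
Your proposal is correct and follows essentially the same route as the paper: recognize $\prob^{\,i}_2$ as a feasible convex program with a strictly concave objective (giving a unique optimizer), then read \eqref{eq:level_same_own}--\eqref{eq:level_diff} off the KKT stationarity and complementary slackness conditions with $\eta_i$ the multiplier of the equality constraint. Your only additions are making explicit what the paper leaves implicit --- the Weierstrass existence step, the joint strict concavity of the separable objective, and the monotonicity argument $\lev_i(\x_i) \leq \lev_i(0) \leq \eta_i$ handling the home node $j=i$ in \eqref{eq:level_diff} --- all of which are sound.
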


\begin{remark}\thmtitle{Nodal best response dynamics}
  $\prob^{\,i}_2$ repeated for all $i \in \node$ gives the set
  $\{\d^*_{ij}\}_{(i,j) \in \arc}$ of all outflows on every arc
  $(i,j) \in \arc$, that is,
  \begin{equation}
    \delta_{ij} = \d^*_{ij} , \quad \forall (i,j) \in
    \arc, \label{eq:nbrd-outflows}
  \end{equation}
  where $\{\d^*_{ij}\}_{(i,j) \in \arc}$ is the set of optimizers of
  $\{\prob^{\,i}_2\}_{i \in \node}$. Then, the dynamics given by
  \eqref{eq:gen_dyn} with $\delta_{ij}$'s as
  in~\eqref{eq:nbrd-outflows} is what we refer to as nodal best
  response dynamics (NBRD). Note that if $\x$ is changed, the set of
  optimizers $\{\d^*_{ij}\}_{(i,j) \in \arc}$ and hence $\Delta$ may
  change too. Thus $\Delta(\x)$ is a function of $\x$.  \bulletend
\end{remark}

Application of Lemma~\ref{lem:opt_prob2i} also lets us infer that NBRD
is strongly positively correlated with $\lev(.)$, which we state in
the following lemma, whose proof we provide in
Appendix~\ref{app:NBRD-proofs}.

\begin{lemma}\label{lem:nbrd_str_pos_cor}%\thmtitle{Strong positive correlation of NBRD}
%Let $\x \in \simplex$ and $\{\d^*_{ij} | j \in \bneigh^{i}\}$ be the unique optimizer of $\prob^{\,i}_2$ in \eqref{eq:p_nbrd-prob}. Then
%\begin{equation}
%	\lev_j(\x_j) > \lev_i(\x_i), \,\,\forall j \in \neigh^i \,\,\text{s.t.}\,\, \delta_{ij}^* > 0\,.
%	\label{eq:level_order2}
%\end{equation}
  NBRD, dynamics~\eqref{eq:gen_dyn} with $\delta_{ij} = \d^*_{ij}$ for
  all $j \in \bneigh^i$ and for all $i \in \node$, is strongly
  positively correlated with $\lev(.)$. \bulletend
\end{lemma}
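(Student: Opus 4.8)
The plan is to invoke Definition~\ref{def:PC} directly: I must show that for every $\x \in \simplex$ and every arc $(i,j) \in \arc$ with $\lev_i(\x_i) \geq \lev_j(\x_j)$, the outflow $\delta_{ij} = \d^*_{ij}$ vanishes. I will argue the contrapositive, showing that $\d^*_{ij} > 0$ forces the strict inequality $\lev_i(\x_i) < \lev_j(\x_j)$, and I will rely entirely on the first-order characterization of the optimizer supplied by Lemma~\ref{lem:opt_prob2i}.

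First I would dispose of the degenerate case $\x_i = 0$. Here the equality constraint $\d_{ii} + \sum_{k \in \neigh^i} \d_{ik} = \x_i = 0$ together with $\d_{ik} \geq 0$ forces $\d^*_{ij} = 0$ for every $j \in \neigh^i$, so strong positive correlation holds vacuously on all arcs emanating from $i$. Thus I may assume $\x_i > 0$, which is exactly the regime in which the multiplier $\eta_i$ and the relations \eqref{eq:level_same_own}--\eqref{eq:level_diff} are available.

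The heart of the argument is to trap $\eta_i$ between $\lev_i(\x_i)$ and $\lev_j(\x_j)$. For the lower bound I would establish $\eta_i \geq \lev_i(\x_i)$ by a short case split on $\d^*_{ii}$: if $\d^*_{ii} > 0$ then \eqref{eq:level_same_own} gives $\eta_i = \lev_i(\d^*_{ii})$, and since $\d^*_{ii} \leq \x_i$ and $\lev_i$ is strictly decreasing this yields $\eta_i = \lev_i(\d^*_{ii}) \geq \lev_i(\x_i)$; if $\d^*_{ii} = 0$ then the $j = i$ instance of \eqref{eq:level_diff} gives $\lev_i(\x_i) \leq \eta_i$ outright. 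For the upper bound, suppose $\d^*_{ij} > 0$; then \eqref{eq:level_same_neigh} gives $\eta_i = \lev_j(\x_j + \d^*_{ij})$, and because $\d^*_{ij} > 0$ with $\lev_j$ strictly decreasing this is the strict inequality $\eta_i = \lev_j(\x_j + \d^*_{ij}) < \lev_j(\x_j)$.

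Chaining the two bounds, $\d^*_{ij} > 0$ implies $\lev_i(\x_i) \leq \eta_i < \lev_j(\x_j)$, hence $\lev_i(\x_i) < \lev_j(\x_j)$; taking the contrapositive yields exactly the condition of Definition~\ref{def:PC}, completing the proof. The only delicate point is ensuring that the inequality contradicting the hypothesis is strict, which is guaranteed because the neighbor-side bound is already strict (it uses $\d^*_{ij} > 0$ with strict monotonicity of $\lev_j$), so no strictness is needed on the $\eta_i \geq \lev_i(\x_i)$ side. I expect the main, and rather modest, obstacle to be merely the bookkeeping of the $\d^*_{ii} > 0$ versus $\d^*_{ii} = 0$ case distinction together with the separate $\x_i = 0$ case; the strict monotonicity of the payoff density functions does all the real work.
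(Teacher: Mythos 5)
Your proof is correct and takes essentially the same route as the paper: both argue the contrapositive and sandwich the multiplier $\eta_i$ from Lemma~\ref{lem:opt_prob2i} between $\lev_i(\x_i)$ and $\lev_j(\x_j)$, with the strictness supplied by $\eta_i = \lev_j(\x_j + \d^*_{ij}) < \lev_j(\x_j)$ via~\eqref{eq:level_same_neigh}. The only differences are cosmetic --- the paper runs its chain through $\lev_i(\d^*_{ii})$ using $\d^*_{ii} < \x_i$, whereas you split on $\d^*_{ii} > 0$ versus $\d^*_{ii} = 0$ (invoking~\eqref{eq:level_same_own} and the $j = i$ instance of~\eqref{eq:level_diff}, respectively) and treat $\x_i = 0$ explicitly, a case the paper handles implicitly since $\d^*_{ij} > 0$ already forces $\x_i > 0$.
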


Although Lemma \ref{lem:nbrd_str_pos_cor} allows us to directly apply
most of the results in Section~\ref{sec:gen_dyn}, the description of
NBRD through the optimization problem~\eqref{eq:p_nbrd-prob} poses a
few challenges for applying Lemma~\ref{lem:gen_dyn_EU}. Thus, in the
following lemma, we further use Lemma \ref{lem:opt_prob2i} to give an
alternate characterization of the dynamics under specific
scenarios. This characterization allows us to apply
Lemma~\ref{lem:gen_dyn_EU} and establish existence and uniqueness of
solutions for NBRD. The proof of the following lemma appears in
Appendix~\ref{app:NBRD-proofs}.

\begin{lemma}\label{lem:nbrd_alt_dyn}\thmtitle{Computation of
    optimizers of $\prob^{\,i}_2$ under a given flow graph} Let
  $\{\d^*_{ij} \}_{j \in \bneigh^{\,i}}$ be the optimizer of
  $\prob^{\,i}_2$ for a given $\x$ and let $\M^i$ be the support of
  the optimizer. Consider the function $g_{\M^i}(.) : \real \to \real$
  defined as
  \begin{equation}
    g_{\M^i}(y) \ldef \sum_{j \in \M^i} \lev^{-1}_j(y) \, .
    \label{eq:g_func_brd}
  \end{equation}
  Then $\eta_i$ of Lemma \ref{lem:opt_prob2i} is given by
  \begin{equation}
    \eta_i = g_{\M^i}^{-1}\left(\sum_{k \in \M^i \cup \{i\}} \x_k\right).
    \label{eq:eta_brd}
  \end{equation}
  Moreover, $\forall j \in \M^i \setminus \{i\}$
  \begin{subequations}
    \label{eq:delta_brd}
    \begin{equation}
      \d^*_{ij} = \lev^{-1}_j \left( g_{\M^i}^{-1}\left(\sum_{k \in \M^i
            \cup \{i\}} \x_k\right) \right) - \x_j \,,
      \label{eq:delta_ij_brd}
    \end{equation}
    and
    \begin{equation}
      \d^*_{ii} = \lev^{-1}_i \left( g_{\M^i}^{-1}\left(\sum_{k \in \M^i
            \cup \{i\}} \x_k\right) \right), \ \text{ if } i \in \M^i
      .
      \label{eq:delta_ii_brd}
    \end{equation}
  \end{subequations}
  \bulletend
\end{lemma}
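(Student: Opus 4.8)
The plan is to start from the first-order optimality conditions already established in Lemma~\ref{lem:opt_prob2i} and simply invert them using the strict monotonicity of the payoff densities. Throughout, assume $\x_i > 0$ (so that $\eta_i$ is well defined) and recall that $\M^i$ denotes the support of the optimizer, i.e. the set of $j \in \bneigh^{\,i}$ with $\d^*_{ij} > 0$ (writing $\d^*_{ii}$ for the fraction that stays in node $i$). Since the equality constraint in $\prob^{\,i}_2$ forces $\sum_{j \in \bneigh^{\,i}} \d^*_{ij} = \x_i > 0$, at least one of the outflows or the stay term is positive, so $\M^i \neq \varnothing$; this nonemptiness is what makes the functions below well behaved.

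First I would invert the level-matching conditions. By assumption (A1) each $\lev_j(.)$ is strictly decreasing and continuous, hence a bijection with continuous strictly decreasing inverse $\lev^{-1}_j(.)$. For $j \in \M^i$ with $j \neq i$, condition~\eqref{eq:level_same_neigh} reads $\lev_j(\x_j + \d^*_{ij}) = \eta_i$, which upon inversion gives $\x_j + \d^*_{ij} = \lev^{-1}_j(\eta_i)$; similarly, if $i \in \M^i$ then~\eqref{eq:level_same_own} gives $\d^*_{ii} = \lev^{-1}_i(\eta_i)$. These are exactly~\eqref{eq:delta_ij_brd} and~\eqref{eq:delta_ii_brd} once $\eta_i$ is identified, so the remaining task is to pin down $\eta_i$.

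To determine $\eta_i$, I would introduce for each $j \in \M^i$ the post-reallocation occupancy $z_j$, equal to $\d^*_{ii}$ when $j = i$ and to $\x_j + \d^*_{ij}$ otherwise; by the previous step $z_j = \lev^{-1}_j(\eta_i)$, so summing over $j \in \M^i$ yields $\sum_{j \in \M^i} z_j = g_{\M^i}(\eta_i)$ by definition~\eqref{eq:g_func_brd}. On the other hand, splitting each $z_j$ into its $\x_j$ part and its flow part and using that the only nonzero flows are those indexed by $\M^i$, the equality constraint $\sum_{j \in \bneigh^{\,i}} \d^*_{ij} = \x_i$ collapses the total flow contribution to exactly $\x_i$; a short case check on whether $i \in \M^i$ then confirms that $\sum_{j \in \M^i} z_j = \sum_{k \in \M^i \cup \{i\}} \x_k$ in both cases. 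Equating the two expressions gives $g_{\M^i}(\eta_i) = \sum_{k \in \M^i \cup \{i\}} \x_k$.

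Finally, since $\M^i$ is nonempty and each $\lev^{-1}_j(.)$ is strictly decreasing and continuous, $g_{\M^i}(.)$ is strictly decreasing and continuous, hence invertible on its range; inverting gives $\eta_i = g_{\M^i}^{-1}(\sum_{k \in \M^i \cup \{i\}} \x_k)$, which is~\eqref{eq:eta_brd}. Substituting this back into the inverted optimality conditions from the first step yields~\eqref{eq:delta_ij_brd} and~\eqref{eq:delta_ii_brd}. The only genuinely delicate point is the bookkeeping in the constraint step: one must track whether $i \in \M^i$ so that the index set $\M^i \cup \{i\}$ (rather than $\M^i$ alone) appears, but this is resolved by the elementary case distinction above rather than by any substantial argument.
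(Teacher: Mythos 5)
Your proof is correct and follows essentially the same route as the paper's: invert the level-matching conditions \eqref{eq:level_same_own}--\eqref{eq:level_same_neigh} of Lemma~\ref{lem:opt_prob2i} using strict monotonicity of the $\lev_j(.)$'s, sum over the support and apply the feasibility constraint $\sum_{j \in \bneigh^{\,i}} \d^*_{ij} = \x_i$ to obtain $g_{\M^i}(\eta_i) = \sum_{k \in \M^i \cup \{i\}} \x_k$, then invert $g_{\M^i}(.)$, which is strictly decreasing and continuous. Your explicit case distinction on whether $i \in \M^i$ and the standing assumption $\x_i > 0$ (guaranteeing $\M^i \neq \varnothing$ so that $g_{\M^i}$ is invertible) are in fact slightly more careful than the paper's presentation, which leaves both points implicit.
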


From the KKT conditions~\eqref{eq:nbrd_kkt}, notice that the
multipliers $\mu_{ij}^*$ encode the support $\M^i$ while
$\eta_i = \lambda_i^*$. Thus, the characterization of $\{\d^*_{ij}\}$
in Lemma~\ref{lem:nbrd_alt_dyn} is closely related to solving the dual
of problem~$\prob^{\,i}_2$. This alternate characterization of
$\{\d^*_{ij}\}$ through the dual helps us in applying
Lemma~\ref{lem:gen_dyn_EU} in order to guarantee existence and
uniqueness of solutions for NBRD. We now present the main result of
this section where we show asymptotic convergence
of solutions of NBRD to the set $\ne$. We provide the proof the result
in Appendix~\ref{app:NBRD-proofs}.

\begin{theorem}\label{th:NBRD_full}%
  \thmtitle{Existence and uniqueness of solutions for NBRD and
    asymptotic convergence to the Nash equilibrium set} %
  Consider NBRD, the dynamics in \eqref{eq:gen_dyn} with
  $\delta_{ij} = \d^*_{ij}$ $\forall (i,j) \in \arc$, where
  $\{\d^*_{ij}\}_{(i,j) \in \arc}$ is the set of optimizers of the
  problems $\{\prob^{\,i}_2\}_{i \in \node}$. For each
  $\x(0) \in \simplex$, NBRD has a unique solution that exists for
  $\forall \, t \geq 0$. The set of equilibrium points of NBRD in
  $\simplex$ is $\ne$ defined in \eqref{eq:Nash_eq}. Further, if $\x$
  evolves according to NBRD, then as $t \to \infty$, $U(\x(t))$
  converges to a constant and $\x(t)$ approaches $\ne$. \bulletend
\end{theorem}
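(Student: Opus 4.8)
The plan is to establish the three assertions of the theorem in sequence, leaning on the general machinery of Section~\ref{sec:gen_dyn} together with the structural results for $\prob^{\,i}_2$ in Lemmas~\ref{lem:opt_prob2i}--\ref{lem:nbrd_alt_dyn}. Since Lemma~\ref{lem:nbrd_str_pos_cor} already tells us that NBRD is strongly positively correlated with $\lev(.)$, and the non-negativity constraints of $\prob^{\,i}_2$ give $\delta_{ij} = \d^*_{ij} \geq 0$, the convergence part will be a short invocation of Theorem~\ref{th:dyn_conv} and its corollary; the real work lies in (i) setting up existence and uniqueness and (ii) pinning down the equilibrium set as exactly $\ne$.

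For existence and uniqueness I would apply Lemma~\ref{lem:gen_dyn_EU}, and the crux is to exhibit the required finite cover of $\real^N_+$ by closed sets on each of which $\dyn$ coincides with a locally Lipschitz function. The natural cover is indexed by the (finitely many) possible support profiles $\{\M^i\}_{i \in \node}$, $\M^i \subseteq \bneigh^{\,i}$, of the optimizers of $\{\prob^{\,i}_2\}_{i \in \node}$: let $\Mcal{C}_{\{\M^i\}}$ be the closure of the set of $\x$ for which each $\prob^{\,i}_2$ has optimizer supported on $\M^i$. On such a set, Lemma~\ref{lem:nbrd_alt_dyn} gives the outflows $\d^*_{ij}$ via the explicit formulas \eqref{eq:delta_ij_brd}--\eqref{eq:delta_ii_brd}, i.e. compositions of $\lev^{-1}_j$ with $g_{\M^i}^{-1}$ evaluated at the affine quantity $\sum_{k \in \M^i \cup \{i\}} \x_k$. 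Since each $\lev_j$ is strictly decreasing and $C^1$ with nonvanishing derivative, $\lev^{-1}_j$ is locally Lipschitz by the inverse function theorem, and $g_{\M^i}$, being a finite sum of such functions, is again strictly monotone and $C^1$, hence has a locally Lipschitz inverse; therefore the candidate field $\dyn^{\{\M^i\}}$ is locally Lipschitz on $\Mcal{C}_{\{\M^i\}}$. On overlaps of two such closed sets the two formulas must agree, because Lemma~\ref{lem:opt_prob2i} guarantees that $\prob^{\,i}_2$ has a \emph{unique} optimizer, so $\dyn$ is single-valued there; this is precisely the consistency condition required by Lemma~\ref{lem:gen_dyn_EU}, which then yields Lipschitz continuity of $\dyn$ on $\simplex$ and, using positive invariance and compactness of $\simplex$, a unique solution for all $t \geq 0$. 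I expect this piecewise assembly---and in particular verifying that the explicit expressions are the genuine dynamics on the closed regions and that they match on their common boundaries---to be the main obstacle.

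For the equilibrium set, the inclusion $\ne \subseteq \eqpt$ is immediate from Lemma~\ref{lem:ne_fbd}, whose hypotheses hold because NBRD is strongly positively correlated and the feasibility constraint of $\prob^{\,i}_2$ forces $\d^*_{ij} = 0$ whenever $\x_i = 0$. For the reverse inclusion, I would take any equilibrium $\x \in \eqpt$; by strong positive correlation and Lemma~\ref{lem:no_cycle}, the refinement noted after \eqref{eq:eqpt} gives $\Delta(\x) = \bld{0}$, so all outflows vanish, $\d^*_{ij} = 0$ for every $(i,j) \in \arc$. Fix $i \in \supp(\x)$, so $\x_i > 0$; the constraint of $\prob^{\,i}_2$ then forces $\d^*_{ii} = \x_i > 0$, whence \eqref{eq:level_same_own} gives $\lev_i(\x_i) = \eta_i$, while \eqref{eq:level_diff} applied to each $j \in \neigh^i$ (all of which have $\d^*_{ij} = 0$) gives $\lev_j(\x_j) \leq \eta_i = \lev_i(\x_i)$. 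This is exactly the defining inequality of $\ne$ in \eqref{eq:Nash_eq}, so $\x \in \ne$ and hence $\eqpt = \ne$.

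Finally, with $\delta_{ij} \geq 0$ and strong positive correlation established, Theorem~\ref{th:dyn_conv} gives that $U(\x(t))$ converges to a constant and $\x(t)$ converges to $\eqpt$; since we have just shown $\eqpt = \ne$ (equivalently $\dyn(\x) \neq \bld{0}$ for all $\x \notin \ne$), the corollary to Theorem~\ref{th:dyn_conv} upgrades this to $\x(t) \to \ne$, which completes the proof.
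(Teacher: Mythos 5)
Your route is, in all essentials, the paper's own: for the equilibrium set you argue exactly as the paper does (strong positive correlation plus Lemma~\ref{lem:no_cycle} to force $\Delta(\x) = \bld{0}$, then $\d^*_{ii} = \x_i > 0$ for $i \in \supp(\x)$ together with the KKT consequences \eqref{eq:level_same_own} and \eqref{eq:level_diff} to get $\lev_j(\x_j) \leq \lev_i(\x_i)$, with $\ne \subseteq \eqpt$ from Lemma~\ref{lem:ne_fbd}); the convergence step via Lemma~\ref{lem:nbrd_str_pos_cor} and Theorem~\ref{th:dyn_conv} is likewise identical; and the existence--uniqueness argument uses the same machinery (a finite closed cover indexed by candidate supports, the explicit formulas of Lemma~\ref{lem:nbrd_alt_dyn}, the inverse function theorem for Lipschitzness of $\lev_j^{-1}$ and $g_{\M^i}^{-1}$, and Lemma~\ref{lem:gen_dyn_EU}).

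There is, however, one gap in the step you yourself flag as the main obstacle. You define $\Mcal{C}_{\{\M^i\}}$ as the \emph{closure} of the set of $\x$ on which the optimizer support is exactly $\M^i$, and then dispose of the consistency hypothesis of Lemma~\ref{lem:gen_dyn_EU} by saying the formulas must agree on overlaps because the optimizer of $\prob^{\,i}_2$ is unique. Uniqueness makes $\dyn(\x)$ single-valued, but it does not by itself show that the $\M^i$-formula, evaluated at a limit point $\x$ added by taking the closure (where the true optimizer may have a different support), still equals $\d^*(\x)$: for that you need continuity of the map $\x \mapsto \d^*(\x)$, e.g., via Berge's maximum theorem (the feasible-set correspondence of $\prob^{\,i}_2$ is continuous and compact-valued, and the maximizer is unique), which you never state. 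The paper sidesteps this issue entirely by defining the covering sets intrinsically rather than as closures: $\x \in \Mcal{C}_{\M^i_r}$ iff the candidate built from $\M^i_r$ satisfies the feasibility conditions \eqref{eq:eta_feasibility} and its objective value $Q_{\M^i_r}(\x)$ from \eqref{eq:Q_eta_x} weakly dominates $Q_{\M^i_q}(\x)$ for every other feasible candidate $\M^i_q$. These are non-strict inequalities in continuous functions of $\x$, so each $\Mcal{C}_{\M^i_r}$ is closed by construction, and on it the explicit formula \eqref{eq:delta_brd} is the genuine optimizer by definition of the set, so the hypotheses of Lemma~\ref{lem:gen_dyn_EU} hold with no separate continuity argument. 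Either fix --- invoking maximum-theorem continuity of $\d^*$, or adopting the paper's $Q$-comparison definition of the cover --- closes the hole; the remainder of your proof is correct as written.
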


%
%Now, in general, the points in the set $\ne$ may not be stable equilibria, in the Lyapunov sense, under NBRD. The following example demonstrates an unstable Nash equilibrium.
%
%\begin{example}\thmtitle{Unstable Nash equilibrium}
%Consider the graph with node set $\node = \{1,2,3\}$ and edge set $\edg = \{\{1,2\},\{2,3\}\}$. Let the cumulative payoff functions be of the form $\pe_i(y) = -0.5 \, y^2 - a_i \, y$ and hence the payoff density functions are of the form $\lev_i(y) = -y - a_i$. Let $a_1 = 0$, $a_2 = 2$ and $a_3 = 1$. This is a water-tank model as described in \cite{NM-PT:2020:ifac}. Now, it is easy to see that $\bar{\x}^1 = [1,0,0]^T$ is an equilibrium point as $\lev_1(1) = -1 \geq -2 = u_2(0)$. But, any perturbation of the form $\bar{\x}^1 + \varepsilon\, ( \bld{e}_2 - \bld{e}_1)$, with $0 < \varepsilon \leq 1$, shifts the equilibrium to $\bar{\x}^2$ with $\supp(\bar{\x}^2) = \{1,3\}$. Thus, $\bar{\x}^1$ is an unstable equilibrium point. \bulletend 
%\end{example}
%
%\commp{in the above example $\bar{x}^1$ is not asymptotically stable
%  but it is stable}

\section{Network Restricted Payoff Maximization}
\label{sec:NRPM}

In this section, we analyze the dynamics arising out of centralized
network restricted gradient ascent of the social utility, $U(.)$ given
in \eqref{eq:social_utility}. We first analyze the underlying
optimization problem and then formally define the dynamics.
Specifically, we define the dynamics by letting the outflows
$\delta_{ij} = \d_{ij}$, $\forall (i,j) \in \arc$, which come from the
optimizers of the problem $\prob_3$, where
\begin{subequations}
  \label{eq:prob_nrpm}
  \begin{align}
    &\prob_3: \notag\\
    &\max_{\z, \d} \sum_{i \in
      \node}[\pe_i(\z_i ) - \pe_i(0)] \quad = \quad \max_{\z, \d} U(\z)
    \\
    &\text{s.t. } \z_i = \x_i + \sum_{j \in \bneigh^{\,i}} (\d_{ji} -
      \d_{ij}) , \ \forall \, i \in
      \node, \label{eq:nrpm-flow-balance}\\
    &\sum_{j \in \bneigh^{\, i}}\d_{ij}  = \x_i, \ \forall
      \, i \in \node, \label{eq:nrpm-xi-reallocation}\\
    &\d_{ij} \geq 0, \ \forall \, (i,j) \in
      \barc \ldef \arc \cup \{(i,i) \ | \ i \in \node \}
      . \label{eq:nrpm-non-neg}
  \end{align}
\end{subequations}

Note that although $U(\z)$ is strictly concave in $\z$, the cost
function in $\prob_3$ is only concave in $(\z, \d)$. Thus, in general,
there may be more than one optimizer for $\prob_3$. This is unlike the
case for NBRD, where we have unique optimizers for $\prob^{\,i}_2$,
$\forall \, i \in \node$. We show an example in the following.

\begin{example}\thmtitle{Non-unique optimal redistribution choices
    under NRPM} 
  Consider the graph with node set $\node = \{1,2,3,4\}$ and edge set
  $\edg = \{\{1,2\},\{2,3\},\{3,4\},\{4,1\}\}$. Let the cumulative
  payoff functions be of the form $\pe_i(y) = -0.5 \, y^2 - a_i \,
  y$. Thus the payoff density functions are of the form
  $\lev_i(y) = -y - a_i$. This is a uniform water-tank model as described in \cite{NM-PT:2020:ifac}. Let $a_1 = a_3 = 1$, $a_2 = a_4 = 0$ and
  $\x = [0.5,0,0.5,0]^T$. A quick computation reveals that the optimal
  redistributed population state in this case is
  $\z^* = [0,0.5,0,0.5]^T$. This can be done with multiple choices for
  $\d$, two of which are shown in Figure
  \ref{fig:nrpm_opt_diff}. \bulletend
\end{example}

\begin{figure}%[htbp]
	\begin{center}	 
	\begin{tabular}{cc}
		\includegraphics[scale=0.27]{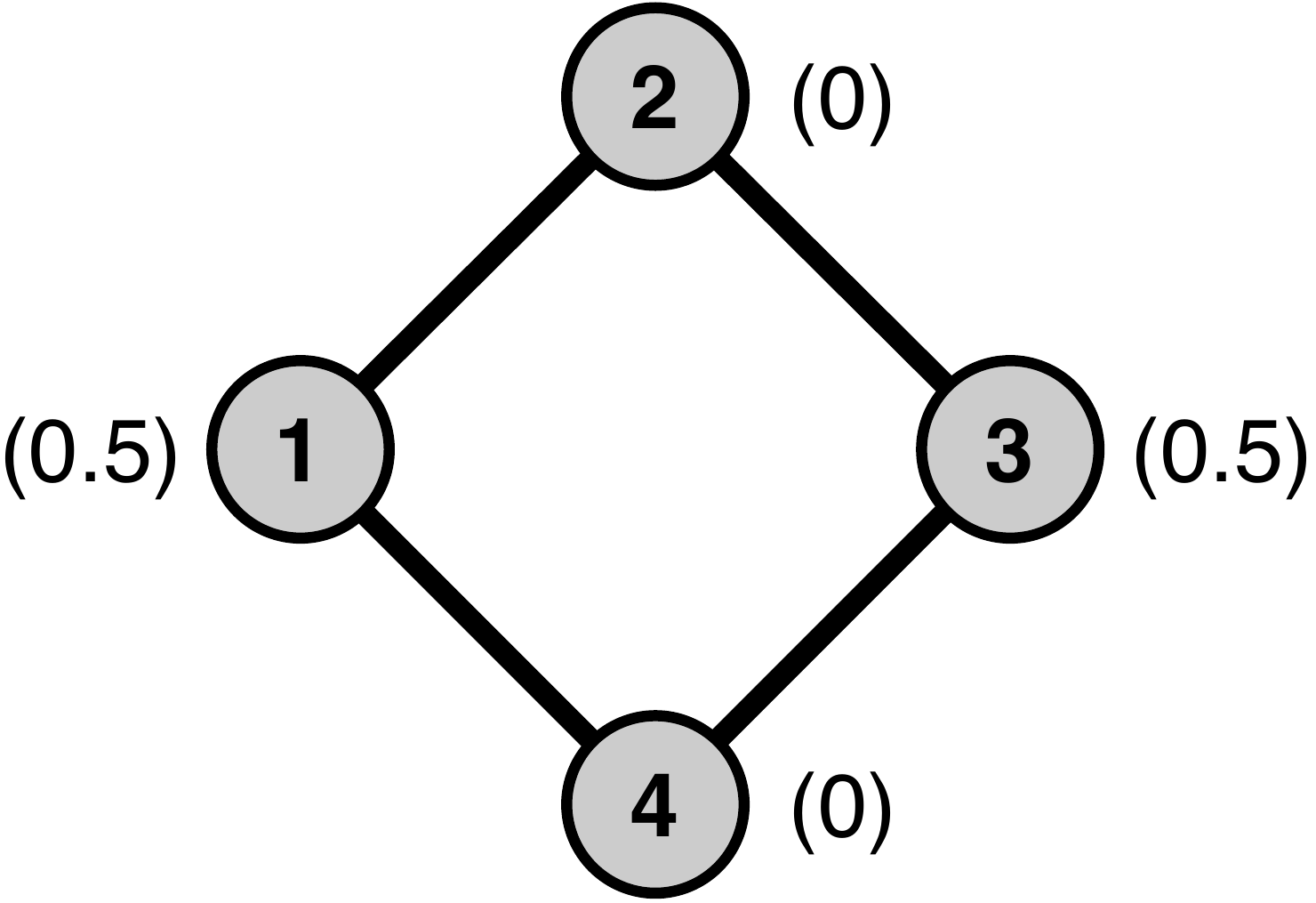} & \includegraphics[scale=0.27]{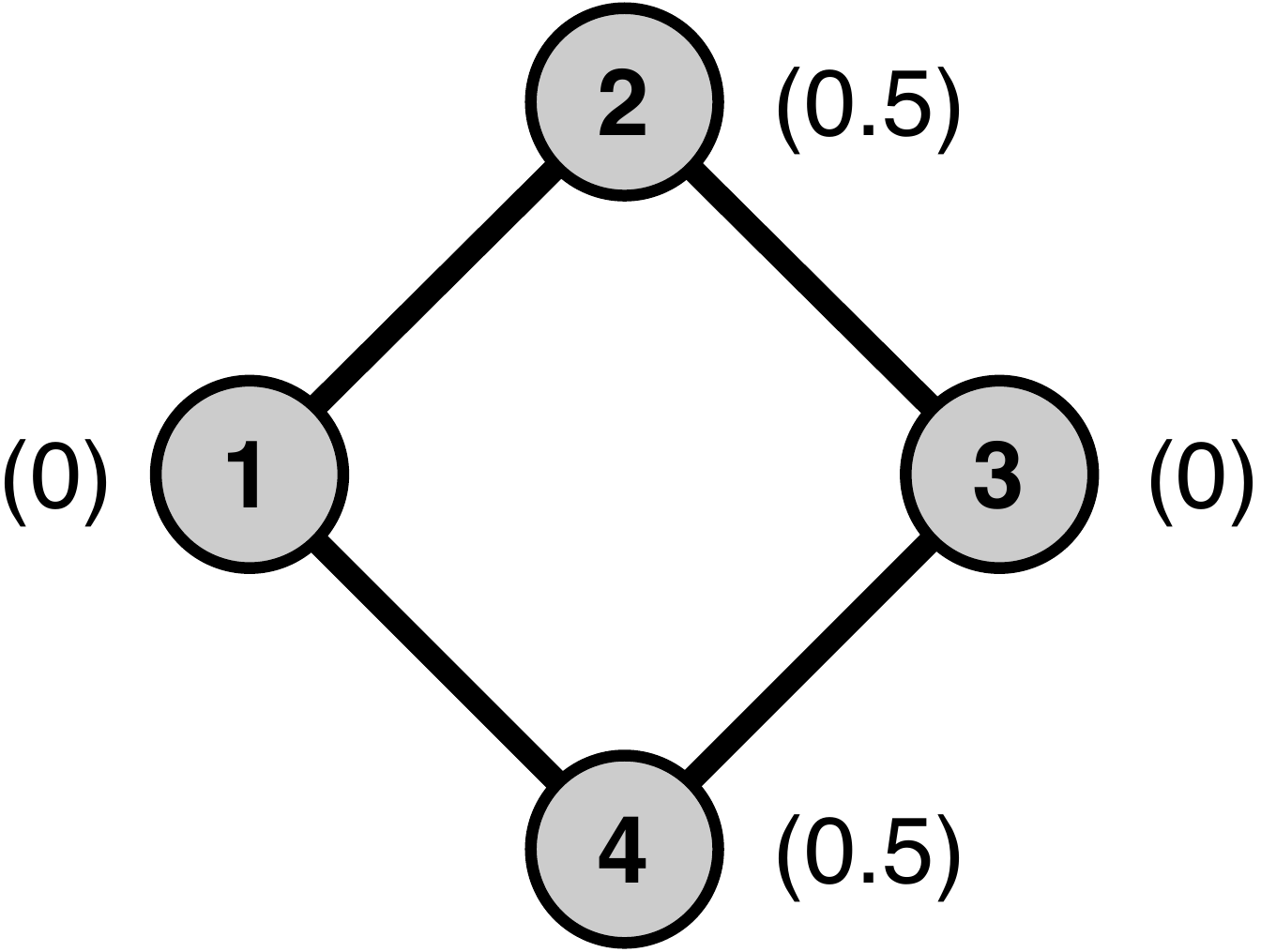}\\
		(a)  & (b) \\ \\
		\includegraphics[scale=0.27]{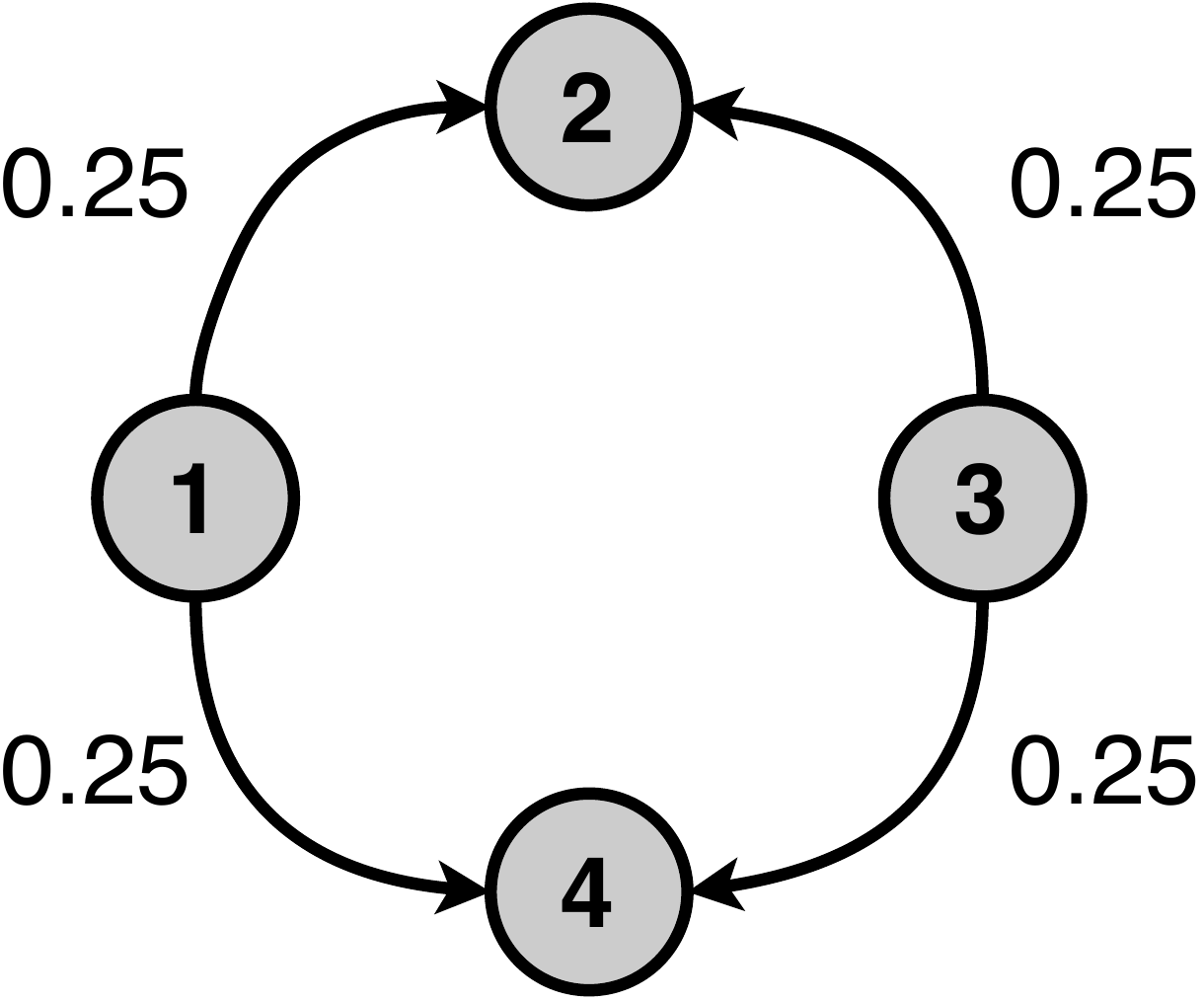} & \includegraphics[scale=0.27]{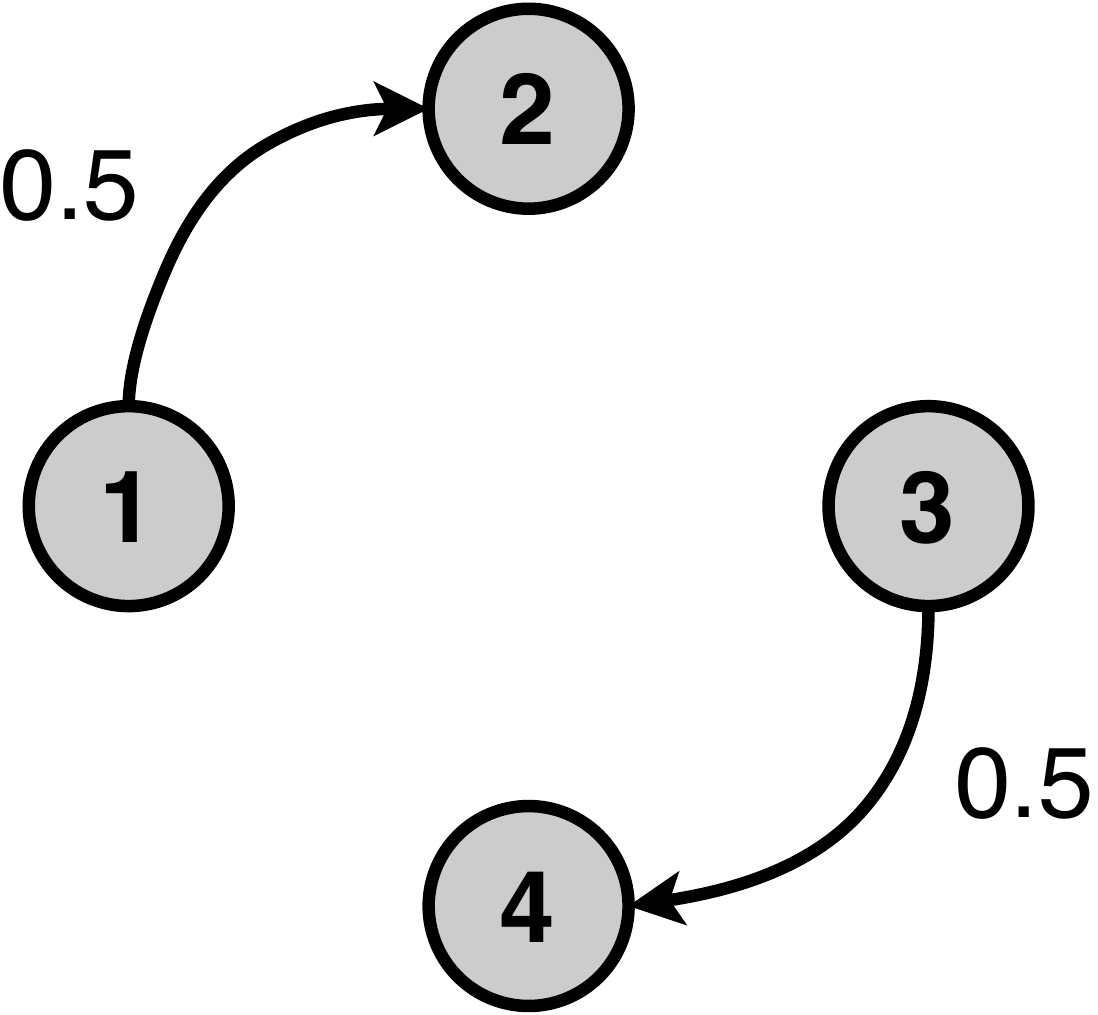}\\
		(c) & (d) \\
	\end{tabular}
	\end{center}
	\caption{Non-unique optimal outflow choice under NRPM. The
          number in (.) next to a node $i$ represents the population
          fraction $\x_i$. The number near arc $(i,j)$ represents
          $\d^*_{ij}$. (a) Initial population state (b) Optimal
          redistributed population state (c),(d) Different optimal
          outflow choices.}
	\label{fig:nrpm_opt_diff}
\end{figure}

However, for all the optimizers, the resultant node fractions
$\z^*_i$, for each node $i \in \node$ is unique. This property is a
direct application of the following lemma, whose proof is in
Appendix~\ref{app:aux-results}.

\begin{lemma}\thmtitle{Uniqueness of a subset of optimizer variables
    in a class of convex optimization
    problems} \label{lem:partial_strict_conv_prob}
  Let $\bld{v} \in \real^{n}$, $\bld{w} \in \real^{m}$. Consider the optimization problem
  \begin{equation*}
    \max_{(\bld{v},\bld{w}) \in \Omega} f(\bld{v}) .
  \end{equation*}
  Suppose that the set $\Omega \subset \real^{n + m}$ is non-empty and
  convex whereas the function $f(.)$ is a strictly concave function of
  $\bld{v}$. Then, every optimizer $(\bld{v}^i, \bld{w}^i)$ has the
  property that $\bld{v}^i = \bld{v}^*$, a unique constant. \bulletend
\end{lemma}

Direct application of Lemma~\ref{lem:partial_strict_conv_prob} to
problem $\prob_3$ gives us the following result.

\begin{lemma}\thmtitle{The resultant node fractions in any
    optimal solution of $\prob_3$ are unique}
  Consider the problem $\prob_3$ in \eqref{eq:prob_nrpm} with
  $\x \in \simplex$. Let $\mathcal{OP}$ be the set of all optimizers
  of $\prob_3$. Then $\forall \, (\z^*,\d^*) \in \mathcal{OP}$, $\z^*$
  is unique. \bulletend
  \label{lem:p3_unique}
\end{lemma}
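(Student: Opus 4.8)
The plan is to recognize Lemma~\ref{lem:p3_unique} as an immediate instance of the abstract Lemma~\ref{lem:partial_strict_conv_prob}, so that the whole argument reduces to matching $\prob_3$ to the template $\max_{(\bld{v},\bld{w}) \in \Omega} f(\bld{v})$ and then checking its three hypotheses. Concretely, I would set $\bld{v} = \z \in \real^N$ (the resultant node fractions), $\bld{w} = \d \in \real^{|\barc|}$ (the reallocation flows on $\barc = \arc \cup \{(i,i)\,|\,i\in\node\}$), take the objective $f(\bld{v}) = U(\z) = \sum_{i\in\node}[\pe_i(\z_i)-\pe_i(0)]$, and let $\Omega$ be the feasible set cut out by the flow-balance equalities~\eqref{eq:nrpm-flow-balance}, the reallocation equalities~\eqref{eq:nrpm-xi-reallocation}, and the nonnegativity constraints~\eqref{eq:nrpm-non-neg}. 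Since the cost of $\prob_3$ depends only on $\z$, this identification is exact.

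Next I would verify the three hypotheses in turn. First, $f$ must be strictly concave in $\bld{v}$: this is exactly the observation recorded after~\eqref{eq:social_utility} that $U(.)$ is strictly concave, which itself follows from assumption \textbf{(A1)} that each $\pe_i(.)$ is strictly concave. Second, $\Omega$ must be convex: since~\eqref{eq:nrpm-flow-balance} and~\eqref{eq:nrpm-xi-reallocation} are affine equalities in $(\z,\d)$ and~\eqref{eq:nrpm-non-neg} is a collection of linear inequalities, $\Omega$ is a polyhedron and hence convex. Third, $\Omega$ must be non-empty, which I would establish by exhibiting an explicit feasible point: take $\d_{ii} = \x_i$ for every $i \in \node$ and $\d_{ij} = 0$ for every $(i,j) \in \arc$. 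Then~\eqref{eq:nrpm-xi-reallocation} reads $\d_{ii} = \x_i$ and is satisfied, the only nonzero flows are self-loops so the inflow and outflow in~\eqref{eq:nrpm-flow-balance} cancel to give $\z_i = \x_i$, and nonnegativity holds because $\x \in \simplex$.

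With all three hypotheses verified, Lemma~\ref{lem:partial_strict_conv_prob} applies directly and yields that the $\bld{v}$-component, namely $\z^*$, is identical across all members of $\mathcal{OP}$, which is precisely the claim. I do not anticipate a genuine obstacle here: the mathematical content of the result lives entirely in Lemma~\ref{lem:partial_strict_conv_prob}, and the present lemma is a bookkeeping verification that $\prob_3$ fits its hypotheses. The only point requiring a line of care is the non-emptiness of $\Omega$, for which the ``do nothing'' reallocation $\d_{ii}=\x_i$ serves as a witness; everything else — affineness of the constraints and strict concavity of $U$ — has already been recorded in the paper.
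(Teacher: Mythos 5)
Your proposal is correct and matches the paper's own route exactly: the paper proves Lemma~\ref{lem:p3_unique} as a ``direct application'' of Lemma~\ref{lem:partial_strict_conv_prob} with $\bld{v} = \z$, $\bld{w} = \d$ and $f = U$, relying on the same strict concavity of $U(.)$ and the same ``do nothing'' feasible point $\z_i = \d_{ii} = \x_i$, $\d_{ij} = 0$ that the paper records when defining NRPM. Your writeup merely makes the hypothesis-checking (convexity of the polyhedral feasible set, non-emptiness) explicit, which the paper leaves implicit.
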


We can make some more observations about the optimizers of $\prob_3$,
which we state in the following lemma. Its proof is in
Appendix~\ref{app:NRPM-proofs}.
\begin{lemma}\thmtitle{Properties of optimizers of
    $\prob_3$} \label{lem:p3_opt}
  Suppose $(\z^*,\d^*)$ is an optimizer of $\prob_3$. Then
  $\forall \, i \in \node$ there exists a constant $\overline{\eta}_i$
  such that
  \begin{equation}
    \label{eq:nrpm_level_same_neigh}%
    \lev_j(\z^*_j) = \overline{\eta}_i, \quad \forall j \in
    \bneigh^{\,i} \,\,\text{s.t.}\,\, \d^*_{ij} > 0 \,.
  \end{equation}

Also,
\begin{equation}
	\lev_j(\z^*_j) \geq \lev_i(\z^*_i), \,\, \forall j \in \neigh^i \,\,\text{s.t.}\,\, \d^*_{ij} > 0 \,.
	\label{eq:level_order_nrpm}
\end{equation}
%Furthermore if $\Mcal{OP}$ is the set of all optimizers of $\prob_3$, then $\forall (\z^{p*},\d^{p*}) \in \Mcal{OP}$,
%\begin{equation}
%	\d^{p*}_{ij} = 0, \,\, \forall (i,j) \in \arc \,\,\text{s.t.}\,\, \lev_j(\z^{p*}_i) > \lev_i(\z^{p*}_j) \,.
%	\label{eq:delta_zero_nrpm}
% \end{equation}
\bulletend
\end{lemma}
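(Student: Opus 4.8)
The plan is to characterize the optimizer through the KKT conditions of $\prob_3$. Since the objective $U(\z)$ is concave in $(\z,\d)$ and all constraints in \eqref{eq:prob_nrpm} are affine, the KKT conditions hold at any optimizer (they are in fact both necessary and sufficient here). Thus, given the optimizer $(\z^*,\d^*)$, there exist multipliers $\{\nu_i\}_{i \in \node}$ for the flow-balance equalities \eqref{eq:nrpm-flow-balance}, $\{\beta_i\}_{i \in \node}$ for the reallocation equalities \eqref{eq:nrpm-xi-reallocation}, and $\{\mu_{ij} \geq 0\}_{(i,j) \in \barc}$ for the non-negativity constraints \eqref{eq:nrpm-non-neg}, satisfying stationarity and complementary slackness.

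First I would write out the stationarity conditions. Differentiating the Lagrangian with respect to $\z_i$ and using $\pe_i'(\cdot) = \lev_i(\cdot)$ yields $\lev_i(\z^*_i) = \nu_i$ for every $i \in \node$. Next I would carefully track where each outflow variable appears: for $(i,j) \in \arc$ the variable $\d_{ij}$ enters the flow balance \eqref{eq:nrpm-flow-balance} of node $i$ (as an outflow) and of node $j$ (as an inflow) with opposite signs, and it enters the reallocation constraint \eqref{eq:nrpm-xi-reallocation} of node $i$. Assembling these coefficients, stationarity with respect to $\d_{ij}$ gives a relation of the form $\mu_{ij} = \nu_i - \nu_j + \beta_i$, and complementary slackness $\mu_{ij}\,\d^*_{ij} = 0$ shows that whenever $\d^*_{ij} > 0$ one has $\nu_j = \nu_i + \beta_i$, i.e. $\lev_j(\z^*_j) = \nu_i + \beta_i$. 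The self-loop variable $\d_{ii}$ contributes nothing to \eqref{eq:nrpm-flow-balance} (its inflow and outflow terms cancel) and appears only in \eqref{eq:nrpm-xi-reallocation}, so its stationarity condition reduces to $\mu_{ii} = \beta_i$; when $\d^*_{ii} > 0$, complementary slackness forces $\beta_i = 0$ and hence $\lev_i(\z^*_i) = \nu_i = \nu_i + \beta_i$. Setting $\overline{\eta}_i := \nu_i + \beta_i$ therefore gives $\lev_j(\z^*_j) = \overline{\eta}_i$ for all $j \in \bneigh^{\,i}$ with $\d^*_{ij} > 0$, which is \eqref{eq:nrpm_level_same_neigh}.

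The inequality \eqref{eq:level_order_nrpm} then follows from the sign of $\beta_i$. The crucial observation is that the identity $\mu_{ii} = \beta_i$ pins $\beta_i$ to the multiplier of the inequality $\d_{ii} \geq 0$, which KKT requires to be non-negative; hence $\beta_i \geq 0$. Combining this with $\lev_j(\z^*_j) = \nu_i + \beta_i = \lev_i(\z^*_i) + \beta_i$ for $j \in \neigh^i$ with $\d^*_{ij} > 0$ immediately yields $\lev_j(\z^*_j) \geq \lev_i(\z^*_i)$. The main obstacle is the bookkeeping in the stationarity step: because each off-diagonal $\d_{ij}$ sits in three distinct constraints with specific signs, one must assemble the coefficients correctly to obtain $\mu_{ij} = \nu_i - \nu_j + \beta_i$, and one must notice that the self-loop's contribution to flow balance vanishes so that $\beta_i$ is tied to the non-negative multiplier $\mu_{ii}$ --- this last point is exactly what supplies the direction of the inequality in \eqref{eq:level_order_nrpm}.
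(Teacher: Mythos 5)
Your proposal is correct and follows essentially the same route as the paper's proof: the identical Lagrangian/KKT setup (your $\nu_i,\beta_i,\mu_{ij}$ are the paper's $\lambda_i,\mu_i,\nu_{ij}$), with stationarity in $\z_i$ giving $\lev_i(\z^*_i)=\nu_i$, stationarity in $\d_{ij}$ plus complementary slackness giving $\lev_j(\z^*_j)=\nu_i+\beta_i$, and the self-loop identity $\beta_i=\mu_{ii}\geq 0$ supplying the inequality \eqref{eq:level_order_nrpm}. Your uniform definition $\overline{\eta}_i:=\nu_i+\beta_i$ even streamlines the paper's argument slightly by avoiding its case split on whether $\d^*_{ii}>0$.
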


\begin{remark}\thmtitle{Network restricted payoff maximization
    dynamics} %
  Problem $\prob_3$ gives the optimizer $\z^*$ of the social utility,
  $U(.)$, under the assumption that the agents may revise their
  choices to one of the nodes neighboring their current node, though
  with complete knowledge of the population state $\x$. Note that
  $\prob_3$ is feasible for each $\x \in \simplex$ as
  $\z_i = \d_{ii} = \x_i$, $\forall i \in \node$ and $\d_{ij} = 0$,
  $\forall (i,j) \in \arc$ is a feasible solution. Then, we let the
  evolution of $\x$ as a whole be~\eqref{eq:gen_dyn} with
  $\delta_{ij} = \d^*_{ij}$ for all $(i,j) \in \arc$, where
  $\{\d^*_{ij}\}_{(i,j) \in \arc}$ comes from an arbitrary optimizer
  of $\prob_3$. Note that, defined this way, even though we have
  non-uniqueness of $\delta_{ij}$'s, the effect is unique as in
  Lemma~\ref{lem:p3_unique}, we have established that
  $\z^*(\x) = \x + \inmat\, \Delta(\x)$ is uniquely defined for each
  $\x \in \simplex$. The resulting dynamics is
  \begin{equation}
    \label{eq:NRPM_ODE}
    \dot{\x} = \inmat \,\Delta(\x) = \z^*(\x) - \x .
  \end{equation}
  We refer to the dynamics~\eqref{eq:NRPM_ODE} as \emph{network
    restricted payoff maximization} (NRPM).
  \bulletend
\end{remark}

Note that~\eqref{eq:level_order_nrpm} is not the same as strong
positive correlation of $\inmat \Delta(\x)$ with $\lev(.)$ as in
\eqref{eq:level_order_nrpm}, the condition on $\lev(.)$ involves $\z^*$
instead of $\x$.

\begin{example}\thmtitle{NRPM may not satisfy strong positive correlation with $\lev(.)$}
Consider the graph with node set $\node = \{1,2,3\}$ and edge set $\edg = \{\{1,2\},\{2,3\}\}$. Let the cumulative payoff functions be of the form $\pe_i(y) = -0.5 \, y^2 - a_i \, y$ and hence the payoff density functions are of the form $\lev_i(y) = -y - a_i$. Let $a_1 = a_2 = 2$ and $a_3 = 0$ and let $\x = [0.2,0.8,0]^T$. For such a setup there is a unique optimizer $\d^*$ of $\prob_3$. A quick computation will reveal that $\d^*_{12} = 0.2$ even though $\lev_1(0.2) > \lev_2(0.8)$.
\bulletend
\end{example}

\subsection{Existence and Uniqueness of Solutions for NRPM}

Similar to NBRD, as $\dyn(.)$ for NRPM is obtained through an
optimization problem, analyzing the dynamics directly from its
definition can be cumbersome. In this subsection, we use Lemma
\ref{lem:p3_opt} to analyze Lipschitzness of $\z^*(\x)$. The idea is
to appropriately partition the set of nodes into components on which
$\lev_i(\z^*_i)$ is the same and then determining $\z^*$
using~\eqref{eq:nrpm_level_same_neigh}. This further leads to a
covering of the simplex $\simplex$, where in each set of the covering,
$\z^*(\x)$ is locally Lipschitz so that we can again apply
Lemma~\ref{lem:gen_dyn_EU}. Though this idea is similar to the one we
used for NBRD, here the situation is complicated by the fact that
there is coordination among the agents across the network.  This leads
to the construction being dependent on the network as a whole and not
just the immediate neighborhood of each node, as in NBRD.

\subsubsection*{Construction of the origin-destination graph and the
  origin-destination sets}

First consider the set $\barc$ and its power set
$\Mcal{P}(\barc) \ldef \{ \M \ | \ \M \subseteq \barc \}$. We can
compute candidate values of $\z^*$ assuming the support for
$\nu^*_{ij}$'s is $\M$, for each $\M \subseteq \barc$. Then, we can
determine the set of $\x$ for which the support of $\nu^*_{ij}$ is
indeed $\M$. For this purpose, we first consider the graph
$(\node, \M)$ and then construct a new graph $(\hat{\node}, \hat{\M})$
and pairs of subsets of nodes $(\Mcal{O}^p, \Mcal{D}^p)$ as follows.

\begin{itemize}
\item For each node $i \in \node$, we construct two nodes in
  $\hat{\node}$, labeled $i_o$ and $i_d$, standing for ``origin'' and
  ``destination''. For each arc $(i,j) \in \M$, we construct an arc
  $(i_o, j_d) \in \hat{\M}$. If an arc $(i, i) \in \M$ then the
  corresponding arc in $\hat{\M}$ is $(i_o, i_d)$. The graph
  $(\hat{\node}, \hat{\M})$ contains no other nodes or
  arcs.
\item For each of the weakly connected components
  $(\hat{\node}_r, \hat{\M}_r)$ of the graph $(\hat{\node}, \hat{\M})$
  that is non-trivial (having more than one node), we define origin
  and destination sets, $\Mcal{O}^r$ and $\Mcal{D}^r$, as
  \begin{equation}
    \Mcal{O}^r \ldef \{ i \in \node \, | \, i_o \in \hat{\node}_r \},
    \quad \Mcal{D}^r \ldef \{ i \in \node \, | \, i_d \in
    \hat{\node}_r \} . \label{eq:Op-Dp}
  \end{equation}
\end{itemize}

\begin{figure}%[htbp] 
  \begin{center}
  \begin{tabular}{cc}
    \includegraphics[scale=0.3]{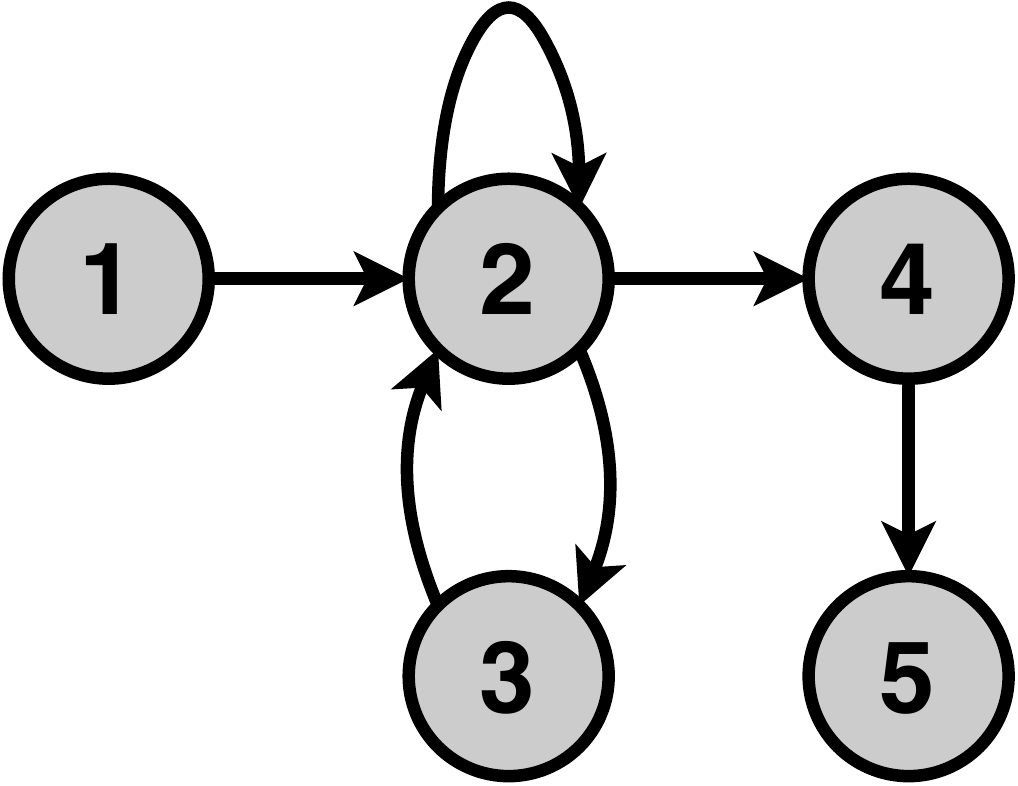} & \includegraphics[scale=0.3]{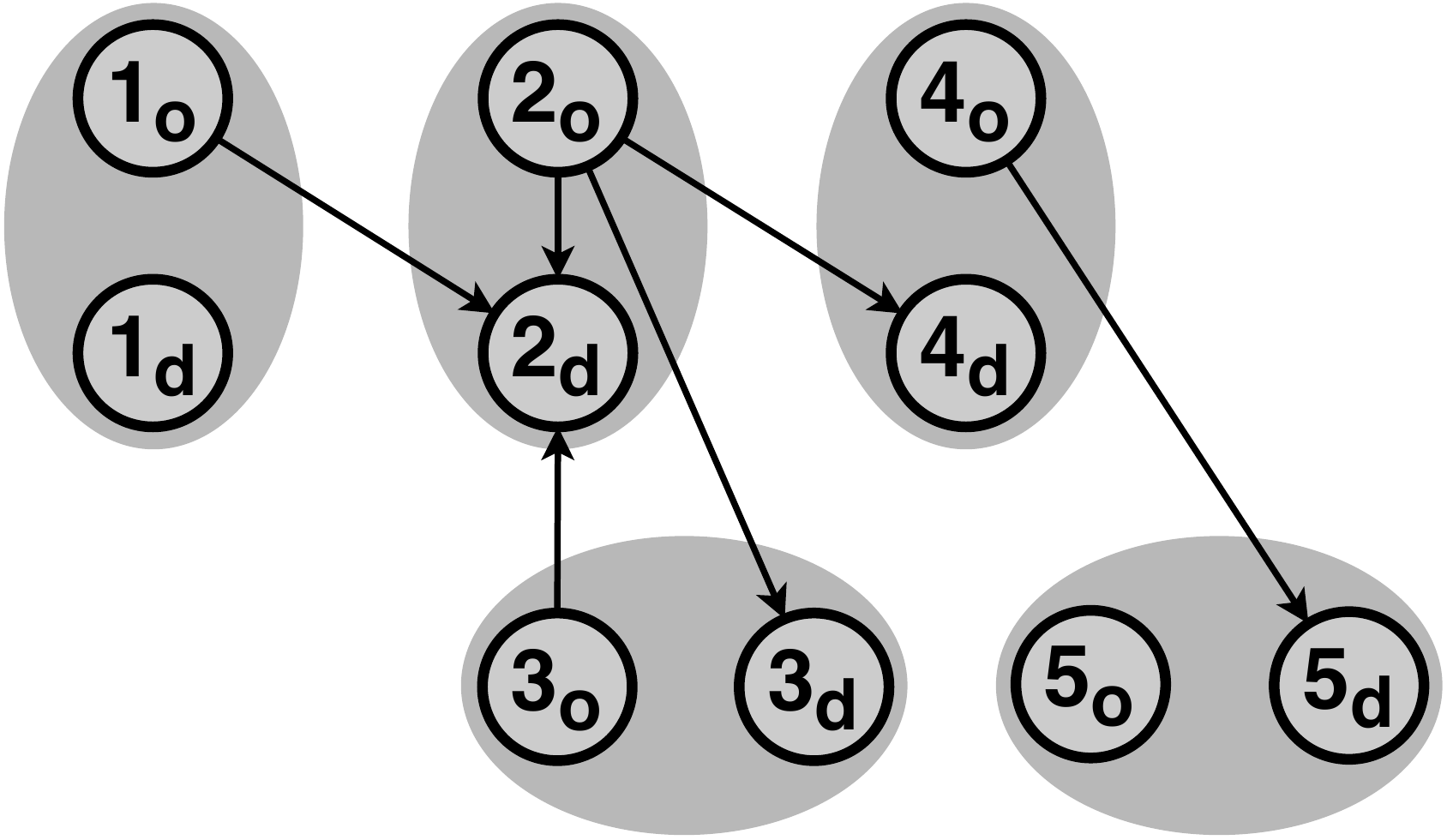}\\
    (a)  & (b)
  \end{tabular}
  \end{center}
  \caption{Construction of origin and destination sets. (a) The
    directed graph $(\node, \M)$. (b) The constructed graph
    $(\hat{\node}, \hat{\M})$. The non-trivial weakly connected
    components are $\{1_o,2_o,3_o,2_d,3_d,4_d\}$ and
    $\{4_o,5_d\}$. Thus, the two sets of origin and destination nodes
    are $\Mcal{O}^1 = \{1,2,3\}$, $\Mcal{D}^1 = \{2,3,4\}$ and
    $\Mcal{O}^2 = \{4\}$, $\Mcal{D}^2 = \{5\}$.}
  \label{fig:origin_dest}
\end{figure}

Figure~\ref{fig:origin_dest} shows an example of this
construction. This construction is useful in determining $\z^*$ if the
support of $\{\d^*_{ij}\}_{(i,j) \in \barc}$ is $\M$. More precisely,
we impose the conditions
\begin{equation}
  \nu_{ij} = 0, \ \forall (i,j) \in \M, \quad \d_{ij} = 0, \ \forall
  (i,j) \notin \M . \label{eq:nu-support-M}
\end{equation}
Finally, given $\M$ and the resulting $\Mcal{O}^r$'s and
$\Mcal{D}^r$'s, we define $F^\M(\x) : \real^N_+ \rightarrow \real^N_+$
as
\begin{equation}
  \label{eq:FM}
  F_i^\M(\x) =
  \begin{cases}
    \displaystyle 0, & i \notin \bigcup_{p \in [1,n]_\integer}
    \Mcal{D}^r\\
    \displaystyle \lev^{-1}_i \left( g^{-1}_{\Mcal{D}^r}\left(\sum_{i
          \in \Mcal{O}^r} \x_i \right)\right), & i \in \Mcal{D}^r ,
  \end{cases}
\end{equation}
with $g_{\Mcal{D}^r}(.)$ defined in \eqref{eq:g_func_brd}. Here
$n \leq N$ is the number of non-trivial weakly connected components in
$(\hat{\node},\hat{\M})$. From the proof of
Theorem~\ref{th:NBRD_full}, we know that $g^{-1}_{\Mcal{D}^r}(.)$ is
continuously differentiable. Hence, we can say that $F^\M(\x)$ is also
a continuously differentiable function of $\x$. We are now ready to
present the connection between this construction and the optimization
problem $\prob_3$. We provide the proof of the following result in
Appendix~\ref{app:NRPM-proofs}.

\begin{lemma}\thmtitle{Optimizers of $\prob_3$ with support pattern
    $\M$}\label{lem:p3-support-pattern}
  Let $(\z^*,\d^*)$ be an optimizer of $\prob_3$ with the support
  pattern~\eqref{eq:nu-support-M} for some $\M \subseteq \barc$. Let
  $\{\Mcal{O}^r\}_{p \in [1,n]_\integer}$ and
  $\{\Mcal{D}^r\}_{p \in [1,n]_\integer}$ be defined as
  in~\eqref{eq:Op-Dp} with $n$ being the number of non-trivial weakly
  connected components in the graph $(\hat{\node}, \hat{\M})$. Then,
  the following are true.
  \begin{itemize}
  \item[(a)] $\exists (i,j) \in \barc \cap \M$, for all $i \in \node$
    such that $\x_i > 0$.
  \item[(b)] Support of $\x$ is a subset of
    $\displaystyle \cup_{r \in [1,n]_\integer} \Mcal{O}^r$.
  \item[(c)] $\z^*(\x) = F^\M(\x)$, where $F^\M(.)$ is given
    in~\eqref{eq:FM}.  \bulletend
  \end{itemize}
\end{lemma}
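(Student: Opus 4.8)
The goal is to prove Lemma~\ref{lem:p3-support-pattern}, which relates an optimizer $(\z^*,\d^*)$ of $\prob_3$ having support pattern $\M$ to the origin-destination construction and the closed-form map $F^\M$. The plan is to exploit the KKT/complementary-slackness structure already extracted in Lemma~\ref{lem:p3_opt}, together with the flow-balance and reallocation constraints~\eqref{eq:nrpm-flow-balance}--\eqref{eq:nrpm-xi-reallocation}, and then push the single-node reasoning of Lemma~\ref{lem:nbrd_alt_dyn} through to the network-wide coupling induced by the weakly connected components of $(\hat{\node},\hat{\M})$.

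**Parts (a) and (b).**

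First I would prove part~(a): if $\x_i > 0$, then by the reallocation constraint~\eqref{eq:nrpm-xi-reallocation}, $\sum_{j \in \bneigh^{\,i}} \d^*_{ij} = \x_i > 0$, so at least one $\d^*_{ij} > 0$, giving an arc $(i,j) \in \barc$ in the support $\M$. This is essentially immediate. For part~(b), I would argue that every $i \in \supp(\x)$ lies in some $\Mcal{O}^r$. By part~(a) there is an arc $(i,j) \in \M$, so $i_o \in \hat{\node}$ and it has at least one outgoing arc $(i_o, j_d) \in \hat{\M}$; hence $i_o$ belongs to a \emph{non-trivial} weakly connected component $(\hat{\node}_r, \hat{\M}_r)$, and by the definition~\eqref{eq:Op-Dp} of $\Mcal{O}^r$ we get $i \in \Mcal{O}^r$. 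This shows $\supp(\x) \subseteq \cup_{r} \Mcal{O}^r$.

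**Part (c): the main obstacle.**

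The substantive part is~(c), showing $\z^*(\x) = F^\M(\x)$. The key idea is that within a single non-trivial component $(\hat{\node}_r, \hat{\M}_r)$, the payoff densities $\lev_j(\z^*_j)$ are forced to a \emph{common} value. I would argue this by chaining~\eqref{eq:nrpm_level_same_neigh}: any two destination nodes reachable through active arcs from origins in the same component share the multiplier $\overline{\eta}$, and since the component is (weakly) connected through active arcs, all $\lev_j(\z^*_j)$ for $j \in \Mcal{D}^r$ equal one constant, call it $\overline{\eta}^{\,r}$. The careful point is that the connectivity in $(\hat{\node}_r,\hat{\M}_r)$ is \emph{weak}, so the chain of equalities must be propagated along arcs regardless of orientation; I would verify that~\eqref{eq:nrpm_level_same_neigh}, applied at each origin node, still transmits equality of densities across a shared destination node, so that weak connectivity suffices. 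Next, for $i \notin \cup_r \Mcal{D}^r$, no active arc terminates at $i$ and (combined with part~(b) / the support conditions~\eqref{eq:nu-support-M}) the population leaves $i$ entirely, yielding $\z^*_i = 0 = F^\M_i(\x)$; this requires checking via~\eqref{eq:nrpm-flow-balance} that such an $i$ has no inflow and its full mass $\x_i$ flows out. Finally, for $i \in \Mcal{D}^r$, I would combine the common-level condition $\lev_i(\z^*_i) = \overline{\eta}^{\,r}$, giving $\z^*_i = \lev_i^{-1}(\overline{\eta}^{\,r})$, with a conservation-of-mass identity: summing~\eqref{eq:nrpm-flow-balance} (or directly the reallocation) over the component shows that the total destination mass $\sum_{j \in \Mcal{D}^r} \z^*_j$ equals the total origin mass $\sum_{i \in \Mcal{O}^r}\x_i$. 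Substituting $\z^*_j = \lev_j^{-1}(\overline{\eta}^{\,r})$ and recalling $g_{\Mcal{D}^r}(y) = \sum_{j \in \Mcal{D}^r}\lev_j^{-1}(y)$ from~\eqref{eq:g_func_brd} gives $g_{\Mcal{D}^r}(\overline{\eta}^{\,r}) = \sum_{i \in \Mcal{O}^r}\x_i$, hence $\overline{\eta}^{\,r} = g_{\Mcal{D}^r}^{-1}(\sum_{i \in \Mcal{O}^r}\x_i)$ and therefore $\z^*_i = \lev_i^{-1}(g_{\Mcal{D}^r}^{-1}(\sum_{i \in \Mcal{O}^r}\x_i)) = F^\M_i(\x)$, matching~\eqref{eq:FM}.

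**Where the difficulty concentrates.**

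I expect the main obstacle to be the bookkeeping that makes the origin-destination mass balance exact: one must confirm that the components partition the relevant mass cleanly, i.e.\ that no arc in $\M$ crosses between two distinct components and that every unit of $\x_i$ for $i \in \Mcal{O}^r$ is accounted for as inflow into $\Mcal{D}^r$ with nothing leaking out. This is precisely what the weakly-connected-component decomposition of $(\hat{\node},\hat{\M})$ is engineered to guarantee, but it must be argued rather than asserted. The level-propagation step across weak connectivity is the second delicate point, since~\eqref{eq:nrpm_level_same_neigh} is stated per origin node and one must show it glues consistently across the bipartite-like structure of $\hat{\M}$.
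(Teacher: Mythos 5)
Your proposal is correct and follows essentially the same route as the paper's proof: parts (a) and (b) immediately from the reallocation constraint~\eqref{eq:nrpm-xi-reallocation} with~\eqref{eq:nrpm-non-neg} and the construction of the $\Mcal{O}^r$ sets, and part (c) by showing $\z^*_i = 0$ for $i$ outside $\cup_r \Mcal{D}^r$ via absence of inflows, propagating the common level $\overline{\eta}_{\Mcal{D}^r}$ of Lemma~\ref{lem:p3_opt} across each weakly connected component, and closing with the component-wise mass balance $\sum_{i \in \Mcal{D}^r} \z^*_i = \sum_{i \in \Mcal{O}^r} \x_i$ to obtain $\overline{\eta}_{\Mcal{D}^r} = g^{-1}_{\Mcal{D}^r}\bigl(\sum_{i \in \Mcal{O}^r} \x_i\bigr)$ and hence~\eqref{eq:FM}. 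In fact you make explicit two points the paper treats as evident (the gluing of level equalities across the bipartite structure of $\hat{\M}$ and the no-leakage bookkeeping between components), so nothing is missing.
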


Thus, an alternate way to solve $\prob_3$ would be to start off with a
candidate $\M \subseteq \barc$, construct the $\Mcal{O}^r$'s and
$\Mcal{D}^r$'s accordingly, compute $(\z, \d)$ and among all such
$(\z, \d)$ that are feasible for $\prob_3$, we can pick the ones that
maximize the objective function of $\prob_3$. While this procedure is
long-winded and may not be useful if our only interest was to solve
$\prob_3$, it helps us in proving Lipschitzness of $\z^*(.)$ using
Lemma~\ref{lem:gen_dyn_EU}. We summarize this in the following lemma,
whose proof is in Appendix~\ref{app:NRPM-proofs}.
 
\begin{lemma}\thmtitle{Existence and uniqueness of solutions for
    NRPM}%
  The state equation in \eqref{eq:NRPM_ODE} with an initial condition
  $\x(0) \in \simplex$ has a unique solution $\forall \, t \geq 0$.
  \label{lem:NRPM_UE}
  \bulletend
\end{lemma}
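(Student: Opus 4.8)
The plan is to establish existence and uniqueness of solutions for the NRPM dynamics~\eqref{eq:NRPM_ODE} by invoking Lemma~\ref{lem:gen_dyn_EU}, exactly as was done for SSD and NBRD. Since $\dot{\x} = \z^*(\x) - \x$, it suffices to show that $\z^*(.)$ is locally Lipschitz on the simplex $\simplex$; the subtraction of $\x$ preserves Lipschitzness. The difficulty is that $\z^*(\x)$ is defined only implicitly through the optimization problem $\prob_3$, so I cannot differentiate it directly. The whole machinery of the origin-destination construction and the functions $F^\M(.)$ is designed precisely to give an \emph{explicit}, continuously differentiable formula for $\z^*$ on suitable pieces of $\simplex$, which is what Lemma~\ref{lem:gen_dyn_EU} needs.

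**Constructing the covering.**
First I would index the candidate solutions by support patterns. For each $\M \subseteq \barc$, define the closed set
\begin{equation*}
  \Mcal{C}_\M \ldef \{ \x \in \real^N_+ \,|\, F^\M(\x) \text{ is a valid (feasible and optimal) solution of } \prob_3 \text{ at } \x \}.
\end{equation*}
I would show this set is closed (it is cut out by non-strict inequalities coming from feasibility of $F^\M(\x)$ in $\prob_3$ together with the optimality/KKT-type inequalities, all of which are continuous in $\x$), and that $\bigcup_{\M \subseteq \barc} \Mcal{C}_\M = \real^N_+$. The covering property holds because for every $\x$, Lemma~\ref{lem:p3_opt} guarantees an optimizer $(\z^*,\d^*)$ exists, and its support is \emph{some} $\M \subseteq \barc$; by Lemma~\ref{lem:p3-support-pattern}(c) that optimizer satisfies $\z^*(\x) = F^\M(\x)$, so $\x \in \Mcal{C}_\M$. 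On each $\Mcal{C}_\M$ I set $\dyn^\M(\x) \ldef F^\M(\x) - \x$. Because $F^\M(.)$ is continuously differentiable (as noted following~\eqref{eq:FM}, using continuous differentiability of $g^{-1}_{\Mcal{D}^r}$ from the proof of Theorem~\ref{th:NBRD_full}), each $\dyn^\M$ is locally Lipschitz on its domain.

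**Consistency on overlaps and conclusion.**
The remaining hypothesis of Lemma~\ref{lem:gen_dyn_EU} is the consistency condition: on any overlap $\Mcal{C}_\M \cap \Mcal{C}_{\M'}$ we must have $\dyn^\M(\x) = \dyn^{\M'}(\x) = \dyn(\x)$. This is where the uniqueness result of Lemma~\ref{lem:p3_unique} does the essential work: even though $\prob_3$ may have many optimizers with different support patterns $\M$ and $\M'$, the resultant node-fraction vector $\z^*$ is \emph{unique} for each fixed $\x$. Hence if $\x$ lies in both $\Mcal{C}_\M$ and $\Mcal{C}_{\M'}$, then $F^\M(\x) = \z^*(\x) = F^{\M'}(\x)$, so $\dyn^\M(\x) = \dyn^{\M'}(\x)$, and both equal the true dynamics $\dyn(\x) = \z^*(\x) - \x$. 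With closedness, the covering property, local Lipschitzness on each piece, and consistency on overlaps all verified, Lemma~\ref{lem:gen_dyn_EU} immediately yields that $\dyn(.)$ is Lipschitz on $\simplex$ and that~\eqref{eq:NRPM_ODE} has a unique solution for all $t \geq 0$ from every $\x(0) \in \simplex$.

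**Main obstacle.**
I expect the main obstacle to be a clean verification that each $\Mcal{C}_\M$ is closed and that the finitely many pieces genuinely cover $\real^N_+$ with matching values on overlaps, rather than any single hard estimate. The conceptual crux is already supplied by Lemma~\ref{lem:p3_unique} (uniqueness of $\z^*$) and Lemma~\ref{lem:p3-support-pattern}(c) (the explicit formula $\z^* = F^\M$); the work is in phrasing the covering so that the non-smooth, combinatorial switching between support patterns $\M$ is absorbed entirely into the closed-set decomposition hypothesis of Lemma~\ref{lem:gen_dyn_EU}, leaving only the routine smoothness of each $F^\M$ to check.
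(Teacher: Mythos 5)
Your proposal follows essentially the same route as the paper's own proof: covering $\real^N_+$ by the closed sets $\Mcal{C}_\M$ indexed by support patterns $\M \subseteq \barc$, using Lemma~\ref{lem:p3-support-pattern}(c) to get the explicit $C^1$ formula $\z^*(\x) = F^\M(\x)$ on each piece, and invoking Lemma~\ref{lem:gen_dyn_EU} to conclude. Your explicit verification of the overlap-consistency hypothesis via the uniqueness of $\z^*$ (Lemma~\ref{lem:p3_unique}) is a point the paper leaves implicit, but it is a refinement of, not a departure from, the same argument.
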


\subsection{On the Convergence of NRPM}

Here, we first characterize the set of equilibrium points of NRPM and
then demonstrate that the dynamics~\eqref{eq:NRPM_ODE} converges to
the equilibrium set asymptotically. The proof of the following result
appears in Appendix~\ref{app:NRPM-proofs}.

\begin{lemma}\thmtitle{Equilibrium set of NRPM}\label{lem:nrpm_eqpt} %
  For NRPM, the dynamics in \eqref{eq:NRPM_ODE}, $\eqpt$ the set of
  equilibrium points in $\simplex$ is the set $\ne$
  in~\eqref{eq:Nash_eq}. \bulletend
\end{lemma}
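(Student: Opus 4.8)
The plan is to exploit Lemma~\ref{lem:p3_unique}: since the resultant fraction $\z^*(\x)$ is uniquely defined for each $\x \in \simplex$, the equilibrium set of~\eqref{eq:NRPM_ODE} is exactly $\eqpt = \{\x \in \simplex \,|\, \z^*(\x) = \x\}$. The claim therefore reduces to the equivalence $\z^*(\x) = \x \iff \x \in \ne$, i.e.\ the network-restricted maximizer of $U$ starting from $\x$ leaves $\x$ unchanged precisely when $\x$ is a Nash equilibrium. I would prove the two inclusions separately, in both cases leaning on concavity of $U$ (Assumption (A1)) and on the observation that the first-order change of $U$ along any feasible reallocation can be rewritten as a sum over arcs.

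For $\ne \subseteq \eqpt$, take $\x \in \ne$ and note that $(\z,\d)$ with $\z = \x$, $\d_{ii} = \x_i$, and $\d_{ij} = 0$ for $i \neq j$ is feasible for $\prob_3$ in~\eqref{eq:prob_nrpm}. I would certify its optimality via the first-order condition for concave maximization over the convex feasible set of $\prob_3$. For any feasible $(\z',\d')$, writing $\z'_i - \x_i = \sum_{j \in \neigh^i}(\d'_{ji}-\d'_{ij})$ (the self-loop terms cancel in~\eqref{eq:nrpm-flow-balance}) and using $\bld{e}_i^T\nabla U(\x) = \lev_i(\x_i)$, the directional derivative of $U$ at $\x$ toward $\z'$ is
\[
  \nabla U(\x)^T(\z'-\x) = \sum_{i \in \node}\lev_i(\x_i)\sum_{j \in \neigh^i}(\d'_{ji}-\d'_{ij}) = \sum_{(i,j)\in\arc}\d'_{ij}\,[\lev_j(\x_j)-\lev_i(\x_i)] .
\]
Constraint~\eqref{eq:nrpm-xi-reallocation} forces $\d'_{ij}=0$ whenever $\x_i=0$, so every term with $\d'_{ij}>0$ has $i\in\supp(\x)$; the defining inequality of $\ne$ in~\eqref{eq:Nash_eq} then gives $\lev_j(\x_j)-\lev_i(\x_i)\le 0$, making the whole sum nonpositive. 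Concavity of $U$ yields $U(\z') \le U(\x) + \nabla U(\x)^T(\z'-\x) \le U(\x)$, so $\x$ is optimal, $\z^*(\x)=\x$, and hence $\x\in\eqpt$.

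For the reverse inclusion I would argue by contraposition. If $\x\notin\ne$, there exist $i\in\supp(\x)$ and $j\in\neigh^i$ with $\lev_i(\x_i) < \lev_j(\x_j)$. Since $\x_i>0$, for small $\epsilon>0$ the reallocation $\d_{ii}=\x_i-\epsilon$, $\d_{ij}=\epsilon$, with every other flow as in the no-flow solution, is feasible for $\prob_3$ and produces $\z^\epsilon$ with $\z^\epsilon_i=\x_i-\epsilon$, $\z^\epsilon_j=\x_j+\epsilon$, and all other components unchanged. Then
\[
  \left.\frac{d}{d\epsilon}U(\z^\epsilon)\right|_{\epsilon=0} = \lev_j(\x_j)-\lev_i(\x_i) > 0 ,
\]
so $U(\z^\epsilon)>U(\x)$ for small $\epsilon$, contradicting the optimality of the no-flow solution, i.e.\ $\z^*(\x)=\x$. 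Hence $\x\in\eqpt$ forces $\x\in\ne$, completing the equivalence.

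I expect the main subtlety to lie in the bookkeeping of the first inclusion: justifying the regrouping of $\sum_i \lev_i(\x_i)\sum_j(\d'_{ji}-\d'_{ij})$ into the clean arc sum (each ordered arc $(i,j)$ contributing $\d'_{ij}$ with a $+$ sign at its destination $j$ and a $-$ sign at its origin $i$), and invoking the correct \emph{global} optimality criterion for concave objectives, so that a single nonpositive directional derivative over all feasible directions certifies optimality rather than mere stationarity. Everything else --- feasibility and non-negativity of the test reallocations, and the elementary derivative computations --- is routine given Assumption (A1) and Lemma~\ref{lem:p3_unique}.
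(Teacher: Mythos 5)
Your proposal is correct, and it takes a genuinely different route from the paper's proof. The paper argues through the KKT system of $\prob_3$ (the conditions \eqref{eq:p_nrpm_kkt_grad1}--\eqref{eq:p_nrpm_kkt_comp_slack2} established in the proof of Lemma~\ref{lem:p3_opt}): for $\eqpt \subseteq \ne$ it takes the no-flow optimizer, applies complementary slackness and stationarity to the self-loop $(i,i)$ to get $\mu_i^* = \nu_{ii}^* = 0$ for $i \in \supp(\x)$, then reads off $\lev_i(\x_i) \geq \lev_j(\x_j)$ from the stationarity condition on each arc $(i,j)$, and dispatches the converse with ``similar arguments,'' i.e.\ exhibiting multipliers so that KKT sufficiency certifies the no-flow point. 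You instead work variationally: $\ne \subseteq \eqpt$ follows from the first-order \emph{global} optimality certificate for a concave objective over a convex feasible set, with the directional derivative regrouped into the arc sum $\sum_{(i,j)\in\arc}\d'_{ij}\,[\lev_j(\x_j)-\lev_i(\x_i)]$ --- the same bookkeeping the paper itself performs in \eqref{eq:V_dot} --- and your observation that \eqref{eq:nrpm-xi-reallocation} together with \eqref{eq:nrpm-non-neg} forces $\d'_{ij}=0$ whenever $\x_i=0$ correctly restricts the sum to $i \in \supp(\x)$; the reverse inclusion $\eqpt \subseteq \ne$ comes by contraposition from an explicit two-node improving perturbation (feasible since $\epsilon \leq \x_i$ keeps $\d_{ii} \geq 0$ and $\x_j + \epsilon \leq \x_i + \x_j \leq 1$), exactly where the paper invokes KKT necessity instead. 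Both proofs share the same skeleton --- Lemma~\ref{lem:p3_unique} reduces everything to the fixed-point characterization $\z^*(\x) = \x$, and you correctly use that uniqueness in both directions --- but yours buys a more elementary, self-contained argument: no Lagrangian, no appeal to multiplier existence, and the converse direction is written out in full rather than left as ``similar arguments.'' What the paper's route buys is brevity in context, since it recycles KKT conditions already derived for Lemma~\ref{lem:p3_opt}. As a minor aside, the paper's chain in this proof reads $\lev_i(\x_i) = \lev_i(\z^*_i) \geq \lev_i(\z_j^*) = \lev_i(\x_j)$ with a repeated subscript $i$ (evidently a typo for $\lev_j$ in the last two members); your arc-sum formulation avoids this slip.
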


We can now show that the trajectories of NRPM in~\eqref{eq:NRPM_ODE}
asymptotically converges to the set $\ne$.

\begin{theorem}\thmtitle{NRPM converges asymptotically to the Nash
    equilibrium set}
  Let the evolution of $\x$ be governed by~\eqref{eq:NRPM_ODE}. If
  $\x(0) \in \simplex$ then as $t \to \infty$, $U(\x(t))$ converges to
  a constant and $\x(t)$ approaches the set $\ne$ in
  \eqref{eq:Nash_eq}.
  \label{th:PM_conv}
\end{theorem}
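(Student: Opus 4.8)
The plan is to use the Lyapunov-like function $V(\x) \ldef -U(\x)$ from \eqref{eq:social_utility} together with LaSalle's invariance principle, in the same spirit as Theorem~\ref{th:dyn_conv}. The essential difficulty is that NRPM need not be strongly positively correlated with $\lev(.)$ (as the preceding example demonstrates), so Theorem~\ref{th:dyn_conv} does not apply directly. Instead, I would exploit the optimization structure of $\prob_3$ in \eqref{eq:prob_nrpm}: the key observation is that $\x$ itself is feasible for $\prob_3$ via the identity reallocation $\d_{ii} = \x_i$, so the optimal value satisfies $U(\z^*(\x)) \geq U(\x)$ for every $\x \in \simplex$.

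Since each $\pe_i$ is $C^2$ and strictly concave under \textbf{(A1)}, $U$ is $C^1$ and strictly concave, and along the trajectories of \eqref{eq:NRPM_ODE} one has
\[
  \dot V = (\nabla V(\x))^T(\z^*(\x) - \x) = -(\nabla U(\x))^T(\z^*(\x) - \x).
\]
By the gradient inequality for the strictly concave $U$, $U(\z^*(\x)) - U(\x) \leq (\nabla U(\x))^T(\z^*(\x) - \x)$, with equality if and only if $\z^*(\x) = \x$. Combining this with $U(\z^*(\x)) \geq U(\x)$ yields
\[
  \dot V \leq U(\x) - U(\z^*(\x)) \leq 0,
\]
and moreover $\dot V < 0$ whenever $\z^*(\x) \neq \x$. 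Hence the set $\{\x \in \simplex \,|\, \dot V = 0\}$ is precisely the equilibrium set $\eqpt = \{\x \in \simplex \,|\, \z^*(\x) = \x\}$.

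Since $\simplex$ is compact and positively invariant, and by Lemma~\ref{lem:NRPM_UE} solutions exist, are unique, and remain in $\simplex$ for all $t \geq 0$, LaSalle's invariance principle then guarantees that $\x(t)$ approaches the largest invariant set contained in $\{\dot V = 0\} = \eqpt$; as every point of $\eqpt$ is an equilibrium, this set is $\eqpt$ itself, and Lemma~\ref{lem:nrpm_eqpt} identifies $\eqpt = \ne$. Monotonicity and boundedness of $V$ on the compact set $\simplex$ give convergence of $U(\x(t))$ to a constant. The main obstacle, and the only place where NRPM genuinely differs from the strongly positively correlated case, is establishing the strict sign of $\dot V$: the ordering \eqref{eq:level_order_nrpm} in Lemma~\ref{lem:p3_opt} is phrased in terms of $\lev(\z^*)$ rather than $\lev(\x)$, so one cannot recover $\dot V \leq 0$ term-by-term as in \eqref{eq:V_dot}. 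The variational characterization $U(\z^*) \geq U(\x)$ combined with strict concavity is exactly what replaces that term-by-term argument.
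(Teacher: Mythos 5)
Your proof is correct and follows essentially the same route as the paper's: the Lyapunov function $V = -U$, the variational inequality $U(\z^*(\x)) \geq U(\x)$ obtained from feasibility of the identity reallocation, the gradient inequality for the concave $U$, and LaSalle together with Lemma~\ref{lem:nrpm_eqpt}. The only (harmless) variation is that you identify $\{\dot V = 0\}$ with $\{\z^*(\x) = \x\}$ via the \emph{strict} gradient inequality of the strictly concave $U$, whereas the paper deduces $V(\z^*(\x)) = V(\x)$ on that set and then invokes feasibility of $\z = \x$ together with the uniqueness of $\z^*$ from Lemma~\ref{lem:p3_unique}.
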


\begin{proof}
  First notice that the simplex $\simplex$ is positively invariant
  under the dynamics~\eqref{eq:NRPM_ODE}. Recall that $\prob_3$ is
  convex in $(\z, \d)$ and the function $V(.) \ldef -U(.)$ is strictly
  convex. Also, note that $\z = \x$, $\d_{ij} = 0$,
  $\forall (i,j) \in \arc$, $\d_{ii} = \x_i$, $\forall i \in \node$ is
  a feasible solution for $\prob_3$ for all $\x \in
  \simplex$. Further, $\z^*(\x)$ is unique by Lemma
  \ref{lem:p3_unique} for all the optimizers
  $( \ \z^*(\x), \ \d^*(\x) \ )$ of $\prob_3$. Then,
  $U(\z^*(\x)) \geq U(\x)$ or equivalently,
  \begin{equation}
    V(\z^*(\x)) \leq V(\x), \quad \forall \x \in \simplex .
    \label{eq:nrpm_disc_time_proof1}
  \end{equation}
  Now, as $V(.)$ is a twice differentiable strictly convex function,
  \begin{equation*}
    \nabla V(\x).[\z^*(\x) - \x] + V(\x) \leq V(\z^*(\x)) ,
  \end{equation*}
  which implies
  \begin{equation*}
    \dot{V}(\x) = \nabla V(\x).\dot{\x} \leq  V(\z^*(\x)) - V(\x) \leq
    0, \quad \forall \x \in \simplex.
  \end{equation*}
  Here, we use the fact that in \eqref{eq:NRPM_ODE},
  $\dot{\x} = \z^*(\x) - \x$ and
  \eqref{eq:nrpm_disc_time_proof1}. Thus $V(\x(t))$ and hence $U(\x(t))$ converges to a constant. Further, $\dot{V}(\x) = 0$ implies
  $V(\x) \leq V(\z^*(\x))$, which along
  with~\eqref{eq:nrpm_disc_time_proof1} means $V(\z^*(\x)) =
  V(\x)$. But recall that for each $\x \in \simplex$, $\z(\x) = \x$ is
  feasible for $\prob_3$. Also, by Lemma~\ref{lem:p3_unique} we know
  that for all optimizers of $(\z^*(\x), \d^*(\x))$ of $\prob_3$,
  $\z^*(\x)$ is unique. Thus, it must be that $\z^*(\x) = \x$, that is
  $\x \in \eqpt = \ne$ (see Lemma~\ref{lem:nrpm_eqpt}). Then, by
  LaSalle's invariance principle \cite{HK:2002:nsc}, the dynamics in
  \eqref{eq:NRPM_ODE} converges to the set $\ne$. % The proof for
  % convergence to a point is same as in the proof of Theorem
  % \ref{th:dyn_conv} with the knowledge of Lemma \ref{lem:nrpm_eqpt}.
\end{proof}

\section{Conclusion} \label{sec:conclusion}

We proposed three dynamics that model the evolution of a stratified
population of optimum seeking agents under different levels of
coordination. For the case with selfish agents, we generalized the
standard Smith dynamics to our setting. For the case with nodal level
coordination, we proposed the dynamics based on the best response of
the fraction of population in a node. The third dynamics is a
centralized dynamics with population level coordination achieved
through network restricted gradient ascent of the social utility. For
all three dynamics, we showed existence and uniqueness of solutions
and also asymptotic convergence of the social utility to a constant
and convergence of the population state to the set of Nash
equilibria.

Future work includes extending this framework to incorporate nodal
capacity constraints and establishing connections with opinion
dynamics. We would also like to utilize the results to come up with
efficient algorithms to compute the converging social
utility. Further, we would like to allow for changes in the cumulative
functions at rate much slower than that at which the social utility
converges and explore control problems.
 % We would then like to come up with control algorithms to
% efficiently change the cumulative payoff functions and control the
% converging social utility.

\bibliographystyle{IEEEtran}
\bibliography{myref}

%for biography
\begin{IEEEbiography}[{\includegraphics[width=1in,height=1.25in,clip,keepaspectratio]{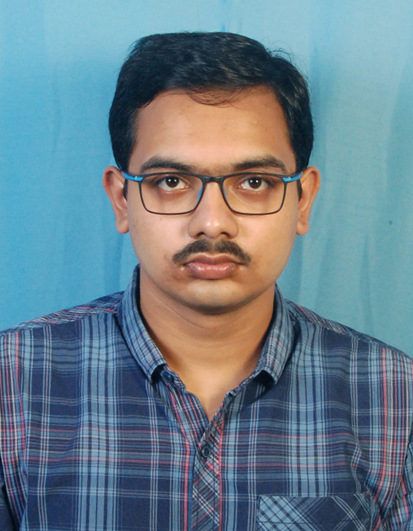}}]{Nirabhra Mandal} received the B.Tech. degree in Electrical Engineering from Institute of Engineering and Management, Salt Lake, Kolkata, India in 2017. Since 2018, he is pursuing the M.Tech(Res) degree from the Department of Electrical Engineering at the Indian Institute of Science. His research interests include multi-agent systems, population games, evolutionary dynamics on networks and non-linear control.
\end{IEEEbiography}
\vskip 0pt plus -1fil
\begin{IEEEbiography}[{\includegraphics[width=1in,
    height=1.25in,clip,keepaspectratio] {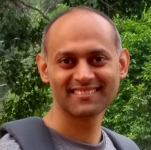}}]{Pavankumar
    Tallapragada} (S'12-M'14) received the B.E. degree in
  Instrumentation Engineering from SGGS Institute of Engineering $\&$
  Technology, Nanded, India in 2005, M.Sc. (Engg.) degree in
  Instrumentation from the Indian Institute of Science in 2007 and the
  Ph.D. degree in Mechanical Engineering from the University of
  Maryland, College Park in 2013. He was a Postdoctoral Scholar in the
  Department of Mechanical and Aerospace Engineering at the University
  of California, San Diego from 2014 to 2017. He is currently an
  Assistant Professor in the Department of Electrical Engineering and
  the Robert Bosch Centre for Cyber Physical Systems at the Indian
  Institute of Science. His research interests include networked
  control systems, distributed control, multi-agent systems and
  networked transportation systems.
\end{IEEEbiography} 

\appendix

\subsection{Proofs of Results on Strongly Positively Correlated
  Dynamics}\label{app:SPC-proofs}

\subsubsection{Proof of Lemma~\ref{lem:no_cycle}}

\begin{proofa}
  \emph{(By contradiction)} Suppose that $\exists \, n$ such that
  $\{1,\cdots,n\} \subseteq \node$ (possibly after relabeling nodes)
  and
  \begin{itemize}
  \item $\forall i \in [1,n-1]_\integer$, $i+1 \in \neigh^{i}$ and
    $1 \in \neigh^{n}$
  \item $\forall i \in [1,n-1]_\integer$, $\delta_{i (i+1)} > 0$ and
    $\delta_{n 1} > 0$.
  \end{itemize}
  This describes a directed cycle in $\tilde{\flow}$ with the nodes
  $(\, 1,\cdots,n,1 \, )$. Then the assumption that $\dyn(.)$ is
  strongly positively correlated implies
  \begin{equation*}
    u_{1}(\x_{1}) < \cdots < u_{n}(\x_{n}) < u_{1}(\x_{1}),
  \end{equation*}
  which is a contradiction.
\end{proofa}

\subsubsection{Proof of Lemma~\ref{lem:ne_fbd}}

\begin{proofa}
Consider the optimization problem
\begin{equation*}
  \begin{split}
    \prob_1: \ \max_{\x} U(\x) = \max_{\x} \sum_{i \in \node}
    \pe_i(\x_i), \, \text{ s.t. } \,\, \x \in \simplex \,.
  \end{split}
  % \label{eq:P_no_restrict}
\end{equation*}
As $U(.)$ is a strictly concave function and $\simplex$ is a convex
set, $\prob_1$ has a unique optimizer $\x^* \in \simplex$. The
Lagrangian for $\prob_1$ is
\begin{equation*}
  \lag_1 = \sum_{i \in \node} \pe_i(\x_i) - \lambda \left(\sum_{i \in
      \node} \x_i - 1 \right) + \sum_{i \in \node} \mu_i \x_i,
  % \label{eq:P_no_restrict_lag}
\end{equation*}
where $\lambda \in \real$ and $\{\mu_i \geq 0\}_{i \in \node}$ are the
Lagrange multipliers. From the KKT conditions for $\prob_1$, we have
\begin{subequations}
\begin{align}
  \label{eq:kkt_grad} &\lev_i(\x_i^*) - \lambda + \mu_i = 0, \,
                        \forall i \in \node,\\
                        % \label{eq:kkt_prim_feas} &\x \in
                        % \simplex^{|\node|},\\
	%\label{eq:kkt_dual_feas} &\mu_i \geq 0, \, \forall i \in
	% \node,\\ 
  \label{eq:kkt_comp_slack} &\mu_i \x_i^* = 0, \, \forall i \in \node \,.
\end{align}
\end{subequations}
Thus, we must have
\begin{equation}
  \mu^*_i = 0, \quad \lev_i(\x_i^*) - \lambda^* = 0, \quad \forall
  i \in \supp(\x^*).
  \label{eq:i_in_supp}
\end{equation}
Now, consider an arbitrary but fixed $i \in \supp(\x^*)$. Combining
\eqref{eq:kkt_grad} and the fact that $\mu^*_j \geq 0$, we get
\begin{equation}
  \lev_j(\x_j^*) - \lambda^* \leq 0, \quad \forall j \in \neigh^i .
  \label{eq:j_neigh_supp}
\end{equation} 
Combining \eqref{eq:i_in_supp} and \eqref{eq:j_neigh_supp} we get
\begin{equation*}
  \lev_j(\x_j^*) \leq \lev_i(\x_i^*), \quad \forall j \in \neigh^i, \
  \forall i \in \supp(\x^*) .
\end{equation*}
Thus, we see that $\x^* \in \ne$ and as a result, $\ne$ is non-empty.

Finally, note that if $\dyn(.)$ is strongly positively correlated with
$\lev(.)$ then the fact that $\ne$ is a subset of the equilibrium set
$\eqpt$ follows directly from Definition \ref{def:PC} and the
assumption that $\delta_{ij} = 0$ $\forall j \in \neigh^i$, if
$\x_i = 0$. This completes the proof.
\end{proofa}

\subsection{Proof of Lemma~\ref{lem:SSD-spc-eqpt}
  (SSD)} \label{app:SSD-lemma}

\begin{proofa}
  Consider arbitrary nodes $i,j \in \node$. If $j \notin \neigh^i$
  then by definition $\delta_{ij} = 0$. If $j \in \neigh^i$ and $\x$
  is such that $\lev_i(\x_i) \geq \lev_j(\x_j)$ then as $\lev_i(.)$'s
  are strictly decreasing, we have
  $\x_i \leq \lev_i^{-1}(\lev_j(\x_j))$. Then, \eqref{eq:yij} implies
  $y_{ij} = \x_i$ and \eqref{eq:sd_delta_int} further implies that
  $\delta_{ij} = 0$. Thus, SSD is strongly positively correlated with
  $\lev$.

  Now, using Lemma \ref{lem:no_cycle}, it is immediately evident that
  the only $\Delta(\x) \in \ker(\inmat) \cap \simplex$ is
  $\Delta(\x) = \bld{0}$. Thus $\x$ is an equilibrium point if and
  only if all $\delta_{ij}(\x)$ are \emph{zero}. Now, for a node
  $i \in \node$ and $j \in \neigh^i$, $\delta_{ij} = 0$ can occur in
  one of two ways: $i \notin \supp(\x)$ or $i \in \supp(\x)$. If
  $i \notin \supp(\x)$ then by \eqref{eq:sd_delta_int}
  $\delta_{ij} = 0$, $\forall j \in \neigh^i$ is the only
  possibility. On the other hand, if $i \in \supp(\x)$ then we have
  two sub-cases: $y_{ij} = \x_i$ or
  $\lev_i^{-1}(\lev_j(\x_j)) \leq y_{ij} < \x_i$ and yet the integral
  in~\eqref{eq:sd_delta_int} is zero. In the second sub-case,
  $\delta_{ij} = 0$ iff
\begin{align*}
  \lev_j(\x_j) ( \x_i - y_{ij} ) %
  &\leq \int_{y_{ij}}^{\x_i} \lev_i(y) \mathrm{d}y \\
  &< \lev_i(y_{ij}) (\x_i - y_{ij}) \leq \lev_j(\x_j) (\x_i - y_{ij}) ,
\end{align*}
where the inequalities are due to the strictly decreasing nature of
$\lev_i$. Now, it is clear that this inequality and as a result the
case $\x_i > \lev_i^{-1}(\lev_j(\x_j)) = y_{ij}$ cannot occur. Thus,
the case $\delta_{ij} = 0$ and $\x_i > 0$ occur iff $y_{ij} = \x_i$,
which according to~\eqref{eq:yij} occurs iff
$\lev_i(\x_i) \geq \lev_j(\x_j)$ for all $j \in \neigh^i$. Hence, the
equilibrium set $\eqpt$ of SSD, $\eqpt$, is $\ne$ in
\eqref{eq:Nash_eq}.%  Finally the claim about stability of equilibrium
% points comes from direct application of Lemma \ref{lem:stable_eq}.
\end{proofa}

\subsection{Proofs of Results on NBRD} \label{app:NBRD-proofs}

\subsubsection{Proof of Lemma~\ref{lem:opt_prob2i}}

\begin{proofa}
  The cost function in~\eqref{eq:p_nbrd-prob} is strictly concave in
  $\{\d_{ij}\}_{j \in \bneigh^{\,i}}$ and the
  problem~\eqref{eq:p_nbrd-prob} is a convex program. Moreover, as
  $\x_i \geq 0$, the problem $\prob^{\,i}_2$ is always feasible since
  $\d_{ij} = 0$, $\forall j \in \neigh^i$ and $\d_{ii} = \x_i$,
  $\forall i \in \node$ is feasible. This justifies the uniqueness of
  optimizers \cite{SB-LV:2004:cvx}. The Lagrangian for $\prob^{\,i}_2$
  can be written as
\begin{equation*}
  \begin{split}
    \lag^i_2 &= \sum_{j \in \neigh^i}[\pe_j(\x_j+\d_{ij}) -
    \pe_j(\x_j)] + [\pe_i (\d_{ii}) - \pe_i(0)]\\
    & \quad - \lambda_i \left(\sum_{j \in \bneigh^{\,i}}
      \d_{ij} - \x_i \right) + \sum_{j \in \bneigh^{\,i}}
    \mu_{ij}\,\d_{ij}\, ,
	\end{split}
	%\label{eq:p_nbrd_lag}
\end{equation*}
where $\lambda_i$ and $\{\mu_{ij} \geq 0\}_{j \in \bneigh^{\,i}}$ are
the Lagrange multipliers. From the KKT conditions for $\prob^{\,i}_2$,
we have
\begin{subequations}
  \label{eq:nbrd_kkt}
\begin{align}
	\label{eq:p_nbrd_kkt_grad1} &\lev_i(\d_{ii}) - \lambda_i +
                                \mu_{ii} = 0,\\
  \label{eq:p_nbrd_kkt_grad2} &\lev_j(\x_j + \d_{ij}) - \lambda_i + \mu_{ij} = 0, \, \forall j \in \neigh^{i},\\
%	\label{eq:p_nbrd_kkt_prim_feas1} &\delta_{ii} = \x_i - \sum_{j \in \neigh^{i}} \delta_{ij}\\
%	\label{eq:p_nbrd_kkt_prim_feas2} &\delta_{ij} \geq 0, \, \forall j \in \bneigh^{\,i}, \\
	% \label{eq:p_nbrd_kkt_dual_feas} &\mu^i_j \geq 0, \, \forall j \in \bneigh^{\,i},\\
	\label{eq:p_nbrd_kkt_comp_slack} &\mu_{ij} \d_{ij} = 0, \, \forall j \in \bneigh^{\,i} \,.
\end{align}
\end{subequations}

%Direct application of \eqref{eq:p_nbrd_kkt_grad1} to \eqref{eq:p_nbrd_kkt_comp_slack} on $\{\d^*_{ij} | j \in \bneigh^{i}\}$ gives \eqref{eq:level_same_own} to \eqref{eq:level_order1}.
%

Now, $\forall j \in \bneigh^{\,i}$ such that $\d^*_{ij} > 0$, by
\eqref{eq:p_nbrd_kkt_comp_slack} we have $\mu^{*}_{ij} = 0$. Then we
obtain~\eqref{eq:level_same_own} and~\eqref{eq:level_same_neigh}
directly from~\eqref{eq:p_nbrd_kkt_grad1}
and~\eqref{eq:p_nbrd_kkt_grad2}, respectively by setting
$\eta_i = \lambda_i^*$. Now $\forall j \in \bneigh^{\,i}$ such that
$\d^*_{ij} = 0$, we have $\mu^{*}_{ij} \geq 0$. This along
with~\eqref{eq:p_nbrd_kkt_grad1} and~\eqref{eq:p_nbrd_kkt_grad2}
gives~\eqref{eq:level_diff} with $\eta_i = \lambda^*_i$.
%Finally if $\exists j \in \neigh^i$ such that $\d^*_{ij} > 0$, by
%\eqref{eq:p_nbrd_kkt_comp_slack} we have $\mu^{i*}_j = 0$. Then
%subtracting \eqref{eq:p_nbrd_kkt_grad2} from \eqref{eq:p_nbrd_kkt_grad1} for
%node $j$ and combining \eqref{eq:p_nbrd_kkt_dual_feas} for node $i$ gives
%\eqref{eq:level_order1}.
\end{proofa}

\subsubsection{Proof of Lemma~\ref{lem:nbrd_str_pos_cor}}

\begin{proofa}
  Consider an arbitrary but fixed $i \in \node$. 
  Notice then from~\eqref{eq:level_same_own} and~\eqref{eq:level_diff} that
  $\lev_i(\d^*_{ii}) \leq \eta_i$. Now, if $\d^*_{ij} > 0$ for some $j \in \neigh^i$
  then $\d^*_{ii} < \x_i$ and as $\lev_j(.)$'s are strictly
  decreasing functions for all $j \in \node$, we have the following
\begin{equation*}
  \lev_j(\x_j) > \lev_j(\x_j + \d^*_{ij}) = \eta_i \geq
  \lev_i(\d^*_{ii}) > \lev_i(\x_i),
\end{equation*}
where the equality is due to~\eqref{eq:level_same_neigh}. Thus
$\d^*_{ij} > 0$ implies $\lev_j(\x_j) > \lev_i(\x_i)$. Applying
the contrapositive of this statement to all $(i,j) \in \arc$ completes
the proof.
\end{proofa}

\subsubsection{Proof of Lemma~\ref{lem:nbrd_alt_dyn}}

\begin{proofa}
  We can immediately make the following observations from the meaning
  of support and Lemma~\ref{lem:opt_prob2i}.
  \begin{itemize}
  \item As $\M^i$ is the support of the optimizers of $\prob^{\,i}_2$,
    $\d^*_{ik} = 0$, for all $k \notin \M^i$.
  \item If $i \in \M^i$ then $\d^*_{ii} = \lev^{-1}_i(\eta_i)$,
    otherwise $\d^*_{ii} = 0$.
  \item $\d^*_{ij} = \lev^{-1}_j(\eta_i) - \x_j$,
    $\forall j \in \Mcal{M}^i \setminus \{i\}$.
  \end{itemize}
  Then, by the feasibility conditions of $\prob^{\,i}_2$ we have
  \begin{equation*}
    \sum_{j \in \Mcal{M}^i} \d^*_{ij} = \x_i
  \end{equation*}
  which implies
  \begin{equation}
    \sum_{j \in \Mcal{M}^i} \lev^{-1}_j(\eta_i) = \x_i + \sum_{j \in
      \Mcal{M}^i \setminus \{i\}} \x_j \,.
    \label{eq:find_etai}
  \end{equation}  
  Then \eqref{eq:eta_brd} and \eqref{eq:delta_ij_brd} follow
  immediately from~\eqref{eq:find_etai} and the definition of the
  function $g_{\M^i}(.)$ in \eqref{eq:g_func_brd}. Similarly,
  \eqref{eq:delta_ii_brd} follows from the fact that if $i \in \M^i$
  then $\d^*_{ii} = \lev^{-1}_i(\eta_i)$. Note that, as $\lev_i(.)$ is
  strictly decreasing and continuous for each $i$, so are
  $\lev^{-1}_i(.)$ and $g_{\M^i}(.)$. Thus the inverse function
  $g^{-1}_{\M^i}(.)$ exists.
\end{proofa}

\subsubsection{Proof of Theorem~\ref{th:NBRD_full}}

\begin{proofa}
  To show existence and uniqueness of solutions, we consider
  $\real^N_+$ to be the domain of $\Delta(\x)$, instead of restricting
  $\x$ to be in $\simplex$. Consider the problem $\prob^{\,i}_2$ for
  an arbitrary $i$. Suppose the support of
  $\{\d^*_{ij}\}_{j \in \bneigh^{\,i}}$ is known a priori. Then, the
  optimizer of $\prob^{\,i}_2$ can be found easily using
  \eqref{eq:delta_brd}. Alternatively, we can solve the problem
  $\prob^{\,i}_2$ by computing the candidate
  $\{\d_{ij}\}_{j \in \bneigh^{\,i}}$ using \eqref{eq:delta_brd} for
  each candidate support $\M^i_s \subseteq \bneigh^{\,i}$ and picking
  the one that satisfies feasibility and maximizes the objective
  function of the problem $\prob^{\,i}_2$.

  In particular, given a candidate support
  $\M^i_s \subseteq \bneigh^{\,i}$, we let $\eta_i(\M^i_s)$ be as
  in~\eqref{eq:eta_brd} with $\M^i$ replaced by $\M^i_s$. Then,
  letting $w_j \ldef \lev^{-1}_j(\eta_i(\M^i_s))$, the feasibility
  conditions of $\prob^{\,i}_2$ can be re-written as
  \begin{subequations}
    \label{eq:eta_feasibility}
    \begin{align}      
      &w_j - \x_j \geq 0, \quad \forall j \in
        \M^i_s \setminus \{ i \} \\
      &w_i \geq 0, \quad \text{if } i \in \M^i_s \\
      &\sum_{j \in \Mcal{M}^i_s} w_j = \x_i +
        \sum_{j \in \Mcal{M}^i_s \setminus \{i\}} \x_j \\
      &w_j \geq \x_j, \quad \forall j \in
        \bneigh^{\, i} \setminus \M^i_s .
    \end{align}
  \end{subequations}
  Note that the last condition is a re-expression
  of~\eqref{eq:level_diff}. We can also write the objective function
  of $\prob^{\,i}_2$ directly in terms of $\eta_i({\M^i_s})$ as
  \begin{equation}
    Q_{\M^i_s}(\x) \ldef %
    \sum_{j \in \M^i_s \setminus \{i\}} \left[\pe_j(w_j) -
      \pe_j(\x_j) \right] + l \, \Big[\pe_i(w_i) - \pe_i(0) \Big]\,,
    \label{eq:Q_eta_x}
  \end{equation}
  where 
  \begin{equation*}
    l =
    \begin{cases}
      1, & \text{if } i \in \M^i_s\\
      0, & \text{otherwise}\,.
    \end{cases}
  \end{equation*}
  Thus, in the set
  \begin{align*}
    & \Mcal{C}_{\M^i_r} \ldef \bigg\{ \x \in \real^N_+ \,\,\Big|\,\,
      Q_{\M^i_r}(\x) \geq Q_{\M^i_q}(\x), \ \forall \M^i_q \subseteq
      \bneigh^{\,i} \\
    & \qquad \qquad \quad \text{ s.t. } \eqref{eq:eta_feasibility}
      \text{ satisfied with } \M^i_s \text{ replaced by } \M^i_q
      \bigg\} \,,
  \end{align*}
  the choice of the support of the optimizer
  $\{\d^*_{ij}\}_{j \in \bneigh^{\,i}}$ is $\M^i = \M^i_r$ as the
  objective function of $\prob^{\,i}_2$ is maximized and the
  feasibility constraints of $\prob^{\,i}_2$ are satisfied. Now note
  that in this set $\Mcal{C}_{\M^i_r}$,
  $\{\d^*_{ij}\}_{j \in \bneigh^{\,i}}$ is given by
  \eqref{eq:delta_ij_brd} with $\M^i = \M^i_r$. Since the payoff
  density functions $\lev_j(.)$'s are strictly decreasing and
  continuously differentiable, their derivatives are never zero. Hence
  by the inverse function theorem, their inverse functions are also
  continuously differentiable. This same reasoning can be done for the
  functions $g_{\M^i}(.)$ and $g^{-1}_{\M^i}(.)$. Thus $\delta_{ij}$
  is continuously differentiable and hence locally Lipschitz in each
  $\Mcal{C}_{\M^i_r}$. Now as the set $\bneigh^{\,i}$ is finite, so is
  the set of all its subsets. Thus, the number of sets
  $\Mcal{C}_{\M^i_s}$ is also finite. Moreover, from their definition,
  these sets are also closed. Such coverings can be found
  $\forall i \in \node$. Thus, by Lemma \ref{lem:gen_dyn_EU}, for each
  $\x(0) \in \simplex$, we have existence and uniqueness of solutions
  of NBRD $\forall t \geq 0$.

  Now, by Lemma \ref{lem:nbrd_str_pos_cor} we know that NBRD is
  strongly positively correlated with $\lev(.)$. Then, by Lemma
  \ref{lem:no_cycle} it is evident that the only
  $\Delta(\x) \in \ker(\inmat)$ is $\Delta(\x) = \bld{0}$. Now if
  $\Delta(\x) = 0$ then $\forall i \in \supp(\x)$,
  $\d^*_{ii} = \x_i > 0$. Then by combining
  \eqref{eq:p_nbrd_kkt_grad1} with \eqref{eq:p_nbrd_kkt_comp_slack} we
  get $\lev_i(\x_i) = \lambda^*_i$. Combining this with
  \eqref{eq:p_nbrd_kkt_grad2} and the fact that $\mu^*_{ij} \geq 0$ we
  get
  \begin{equation*}
    \lev_i(\x_i) \geq \lev_j(\x_j), \ \forall j \in \neigh^i, \
    \forall i \in \supp(\x) .
  \end{equation*}
  Thus, we can say that $\ne \supseteq \eqpt$, the set of equilibrium
  points in $\simplex$. Further, from Lemma~\ref{lem:ne_fbd}, we also
  know that $\ne \subseteq \eqpt$ and that it is non-empty. Thus, for
  NBRD $\eqpt = \ne$. Finally, application of Lemmas
  \ref{lem:nbrd_str_pos_cor} and Theorem~\ref{th:dyn_conv} completes
  the proof.
\end{proofa}

\subsection{Proofs of Results on NRPM} \label{app:NRPM-proofs}

\subsubsection{Proof of Lemma~\ref{lem:p3_opt}}

\begin{proofa}
  The lagrangian for $\prob_3$ can be written as
  \begin{equation*}
    \begin{split}
      \lag_3 &= \sum_{i \in \node}\pe_i(\z_i) - \sum_{i \in \node}
      \lambda_i \left(\z_i - \x_i - \sum_{j \in \bneigh^{\,i}}
        (\d_{ji} - \d_{ij})\right)\\
      & \quad - \mu_i \left( \sum_{j \in \bneigh^{\, i}}\d_{ij} - \x_i
      \right) + \sum_{(i,j) \in \barc} \nu_{ij}\,\d_{ij}\, ,
    \end{split}
    % \label{eq:p_nrpm_lag}
  \end{equation*}
  where $\{\lambda_i\}_{i \in \node}$, $\{\mu_i\}_{i \in \node}$ and
  $\{\nu_{ij} \geq 0\}_{(i,j) \in \barc}$ are the Lagrange
  multipliers. Then, from the KKT conditions for $\prob_3$, we have
  \begin{subequations}
    \begin{align}
      \label{eq:p_nrpm_kkt_grad1}%
      &\lev_i(\z_i) - \lambda_i = 0, \, \forall i \in \node,\\
      \label{eq:p_nrpm_kkt_grad2}%
      &- \lambda_i + \lambda_j - \mu_i + \nu_{ij} = 0, \, \forall
        (i,j) \in \barc,\\
      \label{eq:p_nrpm_kkt_comp_slack2}%
      &\nu_{ij} \, \d_{ij} = 0, \, \forall (i,j) \in \barc \,.
\end{align}
\end{subequations}
Consider an arbitrary but fixed $i \in \node$. If $\d^*_{ij} > 0$ for
only one $j \in \bneigh^{\,i}$ then \eqref{eq:nrpm_level_same_neigh}
is trivially true. So now, suppose that $\d^*_{ij} > 0$ and
$\d^*_{ik} > 0$ for $j \neq k$ and $j, k \in \bneigh^{\,i}$. Then by
\eqref{eq:p_nrpm_kkt_comp_slack2}, $\nu^*_{ij} = \nu^*_{ik} = 0$
which, by \eqref{eq:p_nrpm_kkt_grad1} and \eqref{eq:p_nrpm_kkt_grad2},
gives
\begin{equation}
  \lev_j(\z^*_j) = \lev_k(\z^*_k) = \lambda_j = \lambda_k = \lambda_i +
  \mu_i .
  \label{eq:nrpm_same_lev_proof}
\end{equation}
This proves \eqref{eq:nrpm_level_same_neigh}.

Now, if $\d^*_{ii} > 0$ and $\d^*_{ij} > 0$ for some
$j \in \bneigh^{\,i}$ then by~\eqref{eq:nrpm_same_lev_proof}, we see
that~\eqref{eq:level_order_nrpm} is true. So, now suppose that
$\d^*_{ii} = 0$ and $\d^*_{ij} > 0$ for some $j \in
\bneigh^{\,i}$. Then, \eqref{eq:p_nrpm_kkt_grad2}
and~\eqref{eq:p_nrpm_kkt_comp_slack2} imply that
$\mu_i = \nu_{ii} \geq 0$. Now, using \eqref{eq:p_nrpm_kkt_grad2}
and~\eqref{eq:p_nrpm_kkt_comp_slack2} for $(i,j)$, we have
$\lambda_j = \lambda_i + \mu_i \geq \lambda_i$. Thus, we now
obtain~\eqref{eq:level_order_nrpm} by
using~\eqref{eq:p_nrpm_kkt_grad1} for $i$ and $j$.
\end{proofa}

\subsubsection{Proof of Lemma~\ref{lem:p3-support-pattern}}

\begin{proofa}
  \textbf{(a):} This claim follows immediately from the feasibility
  constraints~\eqref{eq:nrpm-xi-reallocation}
  and~\eqref{eq:nrpm-non-neg} in $\prob_3$.

  \textbf{(b):} This claim now follows from \textbf{(a)} and the
  construction of the $\Mcal{O}^r$ sets.

  \textbf{(c):} Notice from~\eqref{eq:Op-Dp} that if
  $i \notin \displaystyle \cup_{r \in [1,n]_\integer} \Mcal{D}^r$ then
  $\nexists (k,i) \in \M$. Then, from~\eqref{eq:nu-support-M}, we see
  that $\d^*_{ki} = 0$ for all $(k,i) \in \barc$, which along
  with~\eqref{eq:nrpm-flow-balance}-~\eqref{eq:nrpm-non-neg}, implies
  that $\z^*_i = 0$. Thus, for all
  $i \notin \displaystyle \cup_{p \in [1,n]_\integer} \Mcal{D}^r$, we
  have $\z^*_i = F^\M_i(\x)$.
  
  Now, from the construction of $\Mcal{O}^r$ and $\Mcal{D}^r$ and the
  support pattern~\eqref{eq:nu-support-M}, it is evident that
  \begin{equation}
    \sum_{i \in \Mcal{D}^r} \z^*_i = \sum_{i \in \Mcal{O}^r} \x_i , \
    \forall r \in [1,n]_\integer .
    \label{eq:orig-dest-balance}
  \end{equation}
  Further, applying Lemma~\ref{lem:p3_opt} with the support
  pattern~\eqref{eq:nu-support-M}, and the definition of $\Mcal{D}^r$,
  we see that
  \begin{equation}
    \lev_i(\z^*_i) = \overline{\eta}_{\Mcal{D}^r}, \ \forall i \in
    \Mcal{D}^r, \ \forall r \in [1,n]_\integer \, ,
    \label{eq:nrpm_same_level_set}
  \end{equation}
  for some $\overline{\eta}_{\Mcal{D}^r}$. Now,
  considering~\eqref{eq:orig-dest-balance}-\eqref{eq:nrpm_same_level_set}
  together gives us
  \begin{equation*}
    \overline{\eta}_{\Mcal{D}^r} = g^{-1}_{\Mcal{D}^r}\left(\sum_{i
        \in \Mcal{O}^r} \x_i \right), \forall r \in [1,n]_\integer,
  \end{equation*}
  from which claim~(c) now follows.
\end{proofa}

\subsubsection{Proof of Lemma~\ref{lem:NRPM_UE}}

\begin{proofa}
  Here, as in the proof of Theorem~\ref{th:NBRD_full}, instead of
  restricting $\x$ to $\simplex$, we consider $\real^N_+$ to be the
  domain of $\z^*(\x)$. For each $\M \subseteq \barc$, $F^\M(\x)$ is a
  candidate for $\z^*(\x)$. Of course, $F^\M(\x)$ may not even be a
  feasible value of $\z$ in $\prob_3$. Thus, as a function of $\x$, we
  let the \emph{set of feasible support patterns} be
  \begin{equation*}
    \Mcal{F}(\x) \ldef \{ \M \subseteq \barc \ | \ \exists \, \d \text{
      s.t. } \z = F^\M(\x), \
    \eqref{eq:nrpm-flow-balance}-\eqref{eq:nrpm-non-neg} \} .
  \end{equation*}
  For each $\x \in \real^N_+$, each optimizer $(\z^*, \d^*)$ of
  $\prob_3$, must satisfy the support pattern~\eqref{eq:nu-support-M}
  for some $\M \subseteq \barc$. Thus, for each $\x \in \real^N_+$,
  $\z^*(\x) = F^\M(\x)$ for some $\M \subseteq \barc$. Further, for a
  given $\M \subseteq \barc$, $\z^*(\x) = F^\M(\x)$ if
  $\x \in \Mcal{C}_\M$, where
  \begin{align*}
    \Mcal{C}_\M \ldef \{ %
    &\x \in \real^N_+ \ | \ \M \in \Mcal{F}(\x), \\
    &U(F^{\M}(\x)) \geq U(F^{\M'}(\x)), \ \forall \M' \in \Mcal{F}(\x) \}.
  \end{align*}

  Now, we seek to establish that if $\Mcal{C}_\M$ is non-empty then it
  is a closed subset of $\real^N_+$. Since $F^\M(.)$ is a continuously
  differentiable function for each $\M \subseteq \barc$ and since
  $U(.)$ is a strictly concave twice-continuously differentiable
  function, it suffices to show that
  \begin{align*}
    \Mcal{R}_\M %
    &\ldef \{ \x \in \real^N_+ \ | \ \M \in \Mcal{F}(\x)
      \} \\
    &= \{ \x \in \real^N_+ \ | \ \exists \, \d \text{
      s.t. } \z = F^\M(\x), \
      \eqref{eq:nrpm-flow-balance}-\eqref{eq:nrpm-non-neg} \}
  \end{align*}
  is closed. Notice that the constraints
  \eqref{eq:nrpm-flow-balance}-\eqref{eq:nrpm-non-neg} with
  $\z = F^\M(\x)$ are of the form $\bld{M} \d = f(\x)$ and $\d \geq \bld{0}$
  for some matrix $\bld{M}$ and $f(.)$, a continuously differentiable
  function of $\x$. From this, we can reason that every boundary point
  of $\Mcal{R}_\M$ must belong to $\Mcal{R}_\M$. Thus, $\Mcal{R}_\M$
  and $\Mcal{C}_\M$ are both closed.

  As there are finitely many subsets $\M$ of $\barc$, we have finitely
  many $\Mcal{C}_{\M}$'s which cover $\real^N_+$ and in each
  $\Mcal{C}_{\M}$, $F^\M(.)$ is a continuously differentiable function of
  $\x$. Thus, we can apply Lemma~\ref{lem:gen_dyn_EU}, to conclude
  that for each $\x(0) \in \simplex$, we have existence and uniqueness
  of solutions for NRPM $\forall t \geq 0$.
\end{proofa}

\subsubsection{Proof of Lemma~\ref{lem:nrpm_eqpt}}

\begin{proofa}
  Recall from Lemma \ref{lem:p3_unique} that for any optimizer
  $(\z^*(\x), \d^*(\x))$ of $\prob_3$, $\z^*(\x)$ is unique. Also,
  notice from~\eqref{eq:NRPM_ODE} that if $\x \in \eqpt$ then
  $\z^*(\x) = \x$. Thus, in order to study the equilibrium set, it
  suffices to determine the set of $\x$ for which $\prob_3$ has the
  solution $\z^* = \x$ and $\d^*_{ij} = 0$, $\forall (i,j) \in \arc$
  and $\d^*_{ii} = \x_i$, $\forall i \in \node$. Now, consider an
  arbitrary but fixed $i \in \supp(\x)$. Applying
  \eqref{eq:p_nrpm_kkt_grad2}-\eqref{eq:p_nrpm_kkt_comp_slack2} for
  $(i,i) \in \barc$, we see that $\mu_i^* = \nu_{ii}^* = 0$. Now
  applying
  \eqref{eq:p_nrpm_kkt_grad2}-\eqref{eq:p_nrpm_kkt_comp_slack2} for
  all $(i,j) \in \barc$, we see that
  $\lev_i(\x_i) = \lev_i(\z^*_i) \geq \lev_i(\z_j^*) =
  \lev_i(\x_j)$. This implies that $\x \in \ne$. Using similar
  arguments, we can also show that the converse is true, that is, if
  $\x \in \ne$ then $\z^*(\x) = \x$, $\d^*_{ij} = 0$,
  $\forall (i,j) \in \arc$ and $\d^*_{ii} = \x_i$,
  $\forall i \in \node$ is an optimal solution of $\prob_3$. This
  proves the result.
  % To prove the second part we proceed in a manner similar to Lemma
  % \ref{lem:stable_eq}. Notice that if we introduce a positive
  % (negative) perturbation, in order to maintain a constant
  % $\overline{\eta}$ in \eqref{eq:nrpm_level_same_neigh} $\lev_i(.)$'s
  % must be reduced (increased) and hence $\z^* \geq \x$
  % ($\z^* \leq \x$). Rest of the proof is similar to
  % \ref{lem:stable_eq}.
\end{proofa}

\subsection{Proofs of Auxiliary Results}\label{app:aux-results}

\subsubsection{Proof of Lemma~\ref{lem:gen_dyn_EU}}

\begin{proofa}
  Consider an $\x \in \real^N_+$. Notice that if $\exists$ a
  $\Mcal{C}_i$ such that $\x$ belongs to the interior of $\Mcal{C}_i$
  then $\dyn$ is locally Lipschitz at $\x$ as $\dyn^i$ is locally
  Lipschitz in $\Mcal{C}_i$. The only other possibility is that $\x$
  is a boundary point of $\Mcal{C}_i$ for all
  $i \in \Mcal{I} \subseteq [1,n]_\integer$ and $\x \notin \Mcal{C}_i$
  for all $i \notin \Mcal{I}$. As all the sets $\Mcal{C}_i$ are
  closed, we can therefore find a closed neighborhood
  $\Mcal{B}_2(\x,r)$ around $\x$ such that
  $\Mcal{B}_2(\x, r) \cap \Mcal{C}_i = \varnothing$ for all
  $i \notin \Mcal{I}$.

  Now, for each $i \in \Mcal{I}$ and
  $\forall \y \in \Mcal{B}_2(\x, r) \cap \Mcal{C}_i$,
  \begin{equation*}
    \| \dyn(\bld{y}) - \dyn(\x) \| = \| \dyn^i(\bld{y}) - \dyn^i(\x)
    \| \leq L_{\Mcal{C}_i} \|\bld{y} - \x\| ,
  \end{equation*}
  where $L_{\Mcal{C}_i}$ is a Lipschitz constant for $\dyn^i$ on the
  compact set $\Mcal{B}_2(\x, r) \cap \Mcal{C}_i$. Thus, we can now
  say
  \begin{equation*}
    \| \dyn(\bld{y}) - \dyn(\x) \|  \leq \max_{i \in \Mcal{I}}
    \{L_{\Mcal{C}_i}\} \|\bld{y} - \x\| , \ \forall \y \in
    \Mcal{B}_2(\x, r) ,
  \end{equation*}
  that is, $\dyn$ is locally Lipschitz at $\x$. As $\x \in \real^N_+$
  was arbitrary, $\dyn$ is locally Lipschitz on $\real^N_+$ or on the
  domain $\simplex$.
  
  Now, note that $\simplex$ is a compact subset of $\real_+^{N}$ and
  $\one^T \dot{\x} = \one^T \inmat \,\Delta = \bld{0}$ as the rows of
  $\inmat$ sum to $\bld{0} \in \real^{2M}$. Thus, $\dyn$ is Lipschitz
  in $\simplex$ and $\simplex$ is positively invariant under
  \eqref{eq:gen_dyn}. This guarantees existence and uniqueness of
  solutions $\forall t \geq 0$.
\end{proofa}

\subsubsection{Proof of Lemma~\ref{lem:partial_strict_conv_prob}}

\begin{proofa}
  Let $\bld{q}^1 = (\bld{v}^1, \bld{w}^1)$ and $\bld{q}^2 = (\bld{v}^2,\bld{w}^2)$ be two optimizers with
  $\bld{v}^1 \neq \bld{v}^2$. Since $\bld{q}^1$ and $\bld{q}^2$ are optimizers, it must be that
  \begin{equation*}
    f(\bld{v}^1) = f(\bld{v}^2) .
  \end{equation*}
  Now, consider a convex combination
  $(\bld{v},\bld{w}) \rdef \bld{q} = \sigma \bld{q}^1 + (1-\sigma) \bld{q}^2$ with $\sigma \in (0,1)$. Since
  $\Omega$ is convex, it must be that $\bld{q} \in \Omega$. Then,
  \begin{align*}
    f(\bld{v}) &= f(\sigma \bld{v}^1 + (1-\sigma) \bld{v}^2) \\
    &> \sigma f(\bld{v}^1) + (1-\sigma) f(\bld{v}^2) = f(\bld{v}^1) = f(\bld{v}^2) ,
  \end{align*}
  where the inequality follows from the strict concavity of
  $f(.)$. However, this means that $f(\bld{v}^1) = f(\bld{v}^2)$ is not the
  optimum value and hence $\bld{q}^1$ and $\bld{q}^2$ are not optimizers. This is
  a contradiction and hence the result in the lemma must be true. This completes the proof.
\end{proofa}

\end{document}